%% file: main.tex
\documentclass[11pt,reqno]{amsart}
\usepackage[a4paper,margin=25mm]{geometry}
\usepackage[T1]{fontenc}
\usepackage{lmodern}
\usepackage{amsmath,amssymb,amsthm,mathtools,mathrsfs,booktabs,microtype}
\usepackage[hidelinks]{hyperref}
\usepackage{enumitem}
\usepackage{tikz}
\setlist[enumerate]{itemsep=3pt,topsep=5pt}
\numberwithin{equation}{section}
\newtheorem{theorem}{Theorem}[section]
\newtheorem{lemma}[theorem]{Lemma}
\newtheorem{proposition}[theorem]{Proposition}
\newtheorem{corollary}[theorem]{Corollary}
\theoremstyle{definition}

\newtheorem{example}[theorem]{Example}
\theoremstyle{remark}

\newcommand{\E}{\mathbb E}

\newcommand{\D}{\operatorname{D}}
\newcommand{\Var}{\operatorname{Var}}
\newcommand{\Deg}{\operatorname{Deg}}
\newcommand{\dist}{\operatorname{dist}}
\newcommand{\diam}{\operatorname{diam}}
\newcommand{\artanh}{\operatorname{artanh}}
\newcommand{\arcosh}{\operatorname{arcosh}}

\newcommand{\eps}{\varepsilon}
\newcommand{\ph}{\varphi}
\newcommand{\cE}{\mathcal E}
\newcommand{\dd}{\,\mathrm d}
\newcommand{\ee}{\mathrm e}

\newcommand{\vol}{\mathrm{Vol}}
\newcommand{\CD}{\mathrm{CD}}
\newcommand{\ones}{\mathbf{1}}

\allowdisplaybreaks[2]
\title[Entropy estimates and volume rigidity]{Entropy estimates and volume rigidity for graphs with positive Bakry-\'Emery curvature}

\author{Yanlong Ding}
\address{\textnormal{Yanlong Ding:}\endgraf\indent School of Mathematical Sciences, University of Science and Technology of China, Hefei 230026, China.}
\email{dylustc@mail.ustc.edu.cn}

\author{Shiping Liu}
\address{\textnormal{Shiping Liu:}\endgraf\indent School of Mathematical Sciences, University of Science and Technology of China, Hefei 230026, China.}
\email{spliu@ustc.edu.cn}

\author{Chiyu Zhou}
\address{\textnormal{Chiyu Zhou:}\endgraf\indent School of Mathematical Sciences, University of Science and Technology of China, Hefei 230026, China.}
\email{dovong@mail.ustc.edu.cn}

\date{September 20, 2026}
\subjclass[2020]{Primary 05C50; Secondary 60J27, 94A17}
\keywords{Bakry--\'Emery curvature, heat semigroup, Shannon entropy, chi-square divergence, volume rigidity}
\begin{document}
\begin{abstract}
We establish a volume bound for weighted graphs with positive Bakry--\'Emery curvature through entropy estimates for the heat flow and characterize the equality case. As a consequence, if a connected unweighted graph \(G=(V,E)\) satisfies the Bakry--\'Emery curvature--dimension condition \(CD(2,\infty)\) with respect to the unnormalized Laplacian, then
\[
    |V|\le 2^d,
\]
where \(d\) denotes the maximum degree. Equality holds if and only if \(G\) is a hypercube. We also establish an analogous volume rigidity result for the normalized Laplacian.

\end{abstract}
\maketitle

\section{Introduction}\label{sec:introduction}
For a closed Riemannian manifold $(M^n,g)$ with $n\ge 2$ and
$\operatorname{Ric}_M\ge(n-1)g$, classical volume comparison and rigidity give
\[
 \operatorname{Vol}(M)\le \operatorname{Vol}(\mathbb S^n),
\]
with equality if and only if $M$ is isometric to the round unit sphere
\cite{Petersen16}.  In the discrete setting, the hypercube $H_d$ is a natural
analogue of the round sphere.  Hypercube graphs are fundamental structures in
geometry, probability theory, and graph theory
\cite{Gromov99,Bobkov97,HararyHayesWu88}.  For both the unnormalized and
normalized Laplacians, we prove that a graph of maximum degree $d$ satisfying
the Bakry--\'Emery curvature lower bound of $H_d$ has at most $2^d$ vertices,
with equality only for $H_d$.

\subsection{Bakry--\'Emery curvature and \texorpdfstring{$\Gamma$}{Gamma}-calculus}
The $\Gamma$-calculus and curvature-dimension conditions introduced by
Bakry and \'Emery \cite{BE1985} provide a synthetic formulation of lower
Ricci-curvature bounds for Markov semigroups; see \cite{BGL2014} for a
systematic account.  Discrete Bakry--\'Emery theory on graphs was initiated in
\cite{Elworthy91,Schmuck99,LinYau10} and has since yielded graph analogues of
classical manifold results, including gradient estimates \cite{KM2020Elliptic} and stochastic
completeness \cite{HuaLin17}, as well as spectral and
higher-order Buser estimates \cite{LiuPeyerimhoff18}. Recently, Guo, Huang,
and Huang proved that connected bounded-degree graphs with nonnegative
Bakry--\'Emery curvature satisfy volume doubling and hence have polynomial
volume growth \cite{GuoHuangHuang26}.

We next turn to the global consequences of positive Bakry--\'Emery curvature
for graphs. A discrete Bonnet--Myers theorem bounds the diameter and implies
finiteness \cite{LMP2018}. The hypercube is a distinguished
sharp model: it is the unique graph attaining equality in the diameter bound
for the unnormalized Laplacian \cite{LMP}.  On the manifold side, Cheng's
maximal-diameter theorem states that if
$\operatorname{Ric}_M\ge(n-1)g$ and $\operatorname{diam}(M)=\pi$, then $M$
is isometric to $\mathbb S^n$ \cite{Cheng75}.  Together with the volume-rigidity
theorem recalled above, this shows that the round sphere is the common equality
model for both diameter and volume.  This parallel raises the natural discrete
question whether the hypercube is likewise the rigid equality model for both.
More precisely, for the unnormalized or normalized Laplacian, must every graph
of maximum degree $n$ satisfying the curvature bound of $H_n$ have at most
$2^n$ vertices, with equality only for $H_n$? In this article, we answer this
question affirmatively.

A related volume-rigidity result concerns Lin--Lu--Yau curvature. Gamlath,
Liu, Lu, and Yuan proved that a simple connected $\{C_3,C_5\}$-free graph with Lin--Lu--Yau 
curvature on every edge at least $\kappa>0$ satisfies
$|V(G)|\le 2^{2/\kappa}$, with equality exactly when $G\cong H_d$ and
$\kappa=2/d$ \cite{GamlathLiuLuYuan25}.

\subsection{Main results}
Our argument is most naturally formulated in a common weighted setting, from
which the two graph applications will follow. Let $G=(V,b,m)$ be a locally
finite and connected reversible weighted graph, where $b(x,y)=b(y,x)$ are the
edge weights and $m(x)>0$ are the vertex masses, and define the Laplacian and
weighted degree
\[
 Lf(x)=\frac{1}{m(x)}\sum_{y\sim x}b(x,y)(f(y)-f(x)),\qquad
 \Deg(x)=\frac{1}{m(x)}\sum_{y\sim x}b(x,y).
\]
Let $d_x$ denote the combinatorial degree. Define the volume $\vol(G)$ of $G$ by $\sum_{x\in V}m(x)$.

For a probability measure $p$ on a finite vertex set $V$, let
\[
 H(p)=-\sum_{x\in V}p(x)\log p(x)
\]
denote its entropy. For $a>0$, define
\[
 F(a)=\frac a2\int_0^1
 \frac{\log\!\bigl(a(1-s^2)\bigr)}
      {a(1-s^2)-1}\dd s.
\]

Our weighted volume comparison theorem is as follows.

\begin{theorem}\label{thm:weighted-entropy}
Let $G=(V,b,m)$ be a finite connected reversible weighted graph with
$|V|\ge2$. Set $D:=\max_{x\in V}\Deg(x)$,
$d:=\max_{x\in V}d_x$, and $m_{\max}:=\max_{x\in V}m(x)$.
Let $\pi$ be the probability measure on $V$ given by
$\pi(x)=m(x)/\vol(G)$. If $G$ satisfies $\CD(K,\infty)$
for some $K>0$, then
\begin{equation}\label{eq:weighted-comparison}
 \log\frac{\vol(G)}{m_{\max}}
 \le H(\pi)
 \le \frac{2D}{K}F\left(\frac{Kd}{2D}\right).
\end{equation}
\end{theorem}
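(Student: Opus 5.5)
The first inequality in \eqref{eq:weighted-comparison} is elementary: since $\pi(x)\le m_{\max}/\vol(G)$ for every $x$,
\[
H(\pi)=\sum_x\pi(x)\log\frac1{\pi(x)}\ge\log\frac{\vol(G)}{m_{\max}}.
\]
For the second inequality the plan is to interpolate from a point mass to $\pi$ along the heat flow and integrate an upper bound on the entropy production. Fix $x_0\in V$, let $P_t=e^{tL}$, and set $f_t=P_t(\ones_{x_0}/m(x_0))$ and $\mu_t(y)=m(y)f_t(y)$. Then $\mu_0=\delta_{x_0}$, so $H(\mu_0)=0$. Connectedness gives $\mu_t\to\pi$ as $t\to\infty$, and hence
\[
H(\pi)=\int_0^\infty \frac{d}{dt}H(\mu_t)\dd t .
\]
Because $H$ is concave, I may also average over the starting point $x_0$ with weights $\pi(x_0)$ if that helps. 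The key identity to use is the symmetry of the heat kernel $p_t(x,y)=p_t(y,x)$ with respect to $m$.

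Differentiating, the $\log m$ contributions should be handled by writing $H(\mu_t)$ through the kernel. The core term is the discrete Fisher information
\[
\frac{d}{dt}H(\mu_t)\approx\frac12\sum_{x,y}b(x,y)\bigl(f_t(y)-f_t(x)\bigr)\bigl(\log f_t(y)-\log f_t(x)\bigr).
\]
Each edge term is $(f(y)-f(x))^2/\Lambda(f(x),f(y))$, where $\Lambda$ is the logarithmic mean. In the substitution $s=e^{-Kt}$ (so $\dd t=-\dd s/(Ks)$), the function $\log u/(u-1)=1/\Lambda(u,1)$ with $u=a(1-s^2)$ and $a=Kd/(2D)$ is exactly what appears in $F$. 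So the target is a pointwise-in-time bound of the form
\[
\frac{d}{dt}H(\mu_t)\le \Phi\bigl(1-e^{-2Kt}\bigr),
\]
which after the change of variables integrates to $\frac{2D}{K}F\bigl(\frac{Kd}{2D}\bigr)$.

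To get such a bound I would use the $\CD(K,\infty)$ reverse local Poincaré inequality
\[
\Gamma(P_tg)\le \frac{K}{e^{2Kt}-1}\bigl(P_t(g^2)-(P_tg)^2\bigr),
\]
equivalently the Bakry--\'Emery gradient estimate, applied to the kernel. Summed appropriately, it controls $\sum_y b(x,y)(f_t(y)-f_t(x))^2$ by a multiple of $\frac{1}{1-e^{-2Kt}}$ times a variance-type quantity. The variance is in turn bounded by $D$ times the mass at $x$, using $\sum_y b(x,y)\le D\,m(x)$. The logarithmic-mean denominators are then controlled through concavity and Jensen over the at most $d$ neighbours of each vertex. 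Here $d$ enters: optimizing the Jensen step over the distribution of $f(y)/f(x)$ among $d_x\le d$ neighbours, subject to the gradient bound, should produce the ratio $u=\frac{Kd}{2D}(1-s^2)$ inside $\log u/(u-1)$. The factor $\frac{2D}{K}$ then comes from $\dd t=-\dd s/(Ks)$ combined with the $D$ normalisation.

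I expect the main obstacle to be this last step. One must combine the $L^2$-type curvature gradient estimate with the $\log$-weighted entropy production sharply enough, without losing constants, so that the integrand is exactly $\frac a2\frac{\log(a(1-s^2))}{a(1-s^2)-1}$. A related difficulty is dealing with the $\log m$ terms in non-uniform measures. I would first try the convexity of $(u,v)\mapsto (u-v)(\log u-\log v)$ together with the elementary inequality $\log u\le u-1$ to linearise near equilibrium, and check on the hypercube that every inequality becomes an equality. That check is also needed for the rigidity statement later.
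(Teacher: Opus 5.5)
Your proof of the first inequality, and your overall strategy of integrating the entropy production of the heat flow from a point mass (from $0$ up to $H(\pi)$), agree with the paper. The gap is the central step: a sharp, integrable pointwise bound on that production is missing, and the mechanism you sketch cannot supply it.

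\textbf{Averaging is required.} Averaging over the starting point is not optional. For a single $x_0$ one has $\frac{\dd}{\dd t}H(p_t^{x_0})=LH_t(x_0)+\sum_y q(x_0,y)\D(p_t^y\Vert p_t^{x_0})$ with $H_t(x)=H(p_t^x)$. The term $LH_t(x_0)$ (your $\log m$ contributions) has no sign and is not controlled by curvature. Only the $\pi$-average, for which $\E_\pi LH_t=0$, gives the clean production $\frac1{\vol(G)}\sum_{\{x,y\}\in E}b(x,y)J(p_t^x,p_t^y)$. Concavity of $H$ plays no role here.

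\textbf{The curvature input is applied to the wrong function.} Suppose you apply the reverse Poincar\'e inequality to the kernel itself, i.e.\ to $g=\ones_{x_0}/m(x_0)$. This gives $\Gamma(f_t)\le\frac{K}{\ee^{2Kt}-1}f_t\bigl(1/m(x_0)-f_t\bigr)$. After bounding the logarithmic mean below by the harmonic mean, this bounds the Fisher information only by $\frac{K}{\ee^{2Kt}-1}\bigl(\vol(G)/m(x_0)-1\bigr)$. For uniform masses that is of order $|V|$, which is circular for a bound on $\log|V|$. It also behaves like $1/t$ at $t=0$, so it is not even integrable. The missing idea is to apply the reverse Poincar\'e inequality at the base point $x$ to the likelihood ratio $z\mapsto p_t^y(z)/p_t^x(z)-1$. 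This yields the size-independent budget $\chi^2(p_t^y\Vert p_t^x)\le\frac{2K}{q(x,y)(\ee^{2Kt}-1)}$ for each oriented edge.

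\textbf{Linearizing destroys integrability.} Even with this budget, linearizing via $\log u\le u-1$ would fail, because it bounds $J$ linearly by chi-square quantities that blow up like $1/t$. The paper instead uses the exact representations $J(p,q)=4\E_\nu\ph(W)$ and $\chi^2(p\Vert q)+\chi^2(q\Vert p)=8\E_\nu W$. Here $\ph$ is strictly concave and grows only logarithmically. It then applies Jensen once over the joint law on $E\times V$ with weights $2b(x,y)\nu_{e,t}(z)/(\vol(G)D_\pi)$. The degree enters through incidence counting: $b(x,y)/q(x,y)=m(x)$ turns the summed budgets into $d_\pi$. The weighted degree enters through the normalization $D_\pi$. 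There is no Jensen step over the neighbours of a vertex. This gives
\[
\frac{\dd}{\dd t}\E_\pi H_t\le 2D_\pi\,\ph\Bigl(\frac{Kd_\pi}{2D_\pi(\ee^{2Kt}-1)}\Bigr),
\]
and you still need the monotonicity of $\beta\mapsto\beta\ph(c/\beta)$ to pass from $(d_\pi,D_\pi)$ to $(d,D)$.

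\textbf{Your target integrand is misidentified.} The $s$ in the formula for $F$ is the auxiliary variable in $\ph(v)=\int_0^1\frac{v}{1+(1-s^2)v}\dd s$, not $\ee^{-Kt}$. The factor $\frac{\log u}{u-1}$ comes from the time integral $\int_0^\infty\frac{a\,\dd t}{\ee^{2t}-1+a(1-s^2)}$, not from a logarithmic mean. With your substitution $s=\ee^{-Kt}$, on $H_n$ with $K=2$ the proposed bound reads $n\ee^{2t}\bigl(-\log(1-\ee^{-4t})\bigr)\approx n\log\frac1{4t}$ near $t=0$. The true production is $2n\ee^{-2t}\artanh(\ee^{-2t})\approx n\log\frac1t$. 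So your pointwise inequality fails on the extremal example itself, and equality on the cube cannot be checked against it.
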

In particular, when $D=Kd/2$, the quantity
$\frac{2D}{K}F\left(\frac{Kd}{2D}\right)=d\log 2$ is the stationary entropy of
the $d$-dimensional hypercube with constant vertex mass $m_0>0$ and edge
weight $m_0K/2$, so the theorem compares $H(\pi)$ with the entropy of this
weighted hypercube.

We emphasize that the normalization by \(m_{\max}\) in the preceding theorem
is essential. Indeed, replacing \((b,m)\) by \((cb,cm)\) leaves the Laplacian
\(L\) and the curvature condition unchanged, while multiplying both \(\vol(G)\) and
\(m_{\max}\) by \(c\). Thus \(\vol(G)/m_{\max}\) is invariant under this rescaling,
whereas \(\vol(G)\) alone is not.

We next specialize the weighted comparison to the unnormalized and normalized
Laplacians, obtaining the following two volume-rigidity theorems.
\begin{theorem}[Unnormalized cardinality bound and rigidity]\label{thm:unnormalized}
Let $G=(V,E)$ be a finite connected simple unweighted graph, with the unnormalized Laplacian $\Delta_{\mathrm c}$. Put $d=\max_xd_x$ and $\bar d=2|E|/|V|$. If $\CD_{\Delta_{\mathrm c}}(2,\infty)$ holds, then
\begin{equation*}
 |V|\le2^{\bar d}\le2^d.
\end{equation*}
Equality in the first inequality, or equality in the bound
$|V|\le 2^d$, holds if and only if $G$ is a hypercube.
\end{theorem}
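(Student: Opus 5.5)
The plan is to specialize Theorem~\ref{thm:weighted-entropy} to $m\equiv1$ and $b(x,y)=\ones_{\{x\sim y\}}$, and then to refine one step of its proof so that the average degree appears. For this weighting, $\vol(G)=|V|$, $m_{\max}=1$, $\pi$ is uniform, $\Deg(x)=d_x$ and $D=d$. With $K=2$ the argument $Kd/(2D)$ of $F$ equals $1$. A direct computation, substituting $1-s^2$ and integrating by parts or expanding $\log(1-s^2)/s^2$ in a power series, gives
\[
F(1)=\tfrac12\int_0^1\frac{-\log(1-s^2)}{s^2}\dd s=\log 2 .
\]
This is consistent with the remark after Theorem~\ref{thm:weighted-entropy} that $\tfrac{2D}{K}F(\tfrac{Kd}{2D})=d\log2$ when $D=Kd/2$. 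Inequality~\eqref{eq:weighted-comparison} then reads $\log|V|=H(\pi)\le d\log 2$, which is the bound $|V|\le 2^d$.

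For the sharper bound $|V|\le 2^{\bar d}$, the statement of Theorem~\ref{thm:weighted-entropy} is not enough on its own, because it only uses the maximal degrees $D$ and $d$. I would go back into its proof, which runs the heat flow $p_t^x=P_t\delta_x$ from a point mass. Along that flow one writes
\[
H(\pi)-H(p_0^x)=\int_0^\infty I(p_t^x)\dd t,
\]
with $I$ the entropy production (Fisher information), and bounds $I(p_t^x)$ using $\CD(K,\infty)$ in the form $\Gamma(P_tf)\le \ee^{-2Kt}P_t\Gamma f$. I expect that the degree quantities enter only through $I$ at, or propagated from, the starting vertex. The aim is a pointwise version: for each fixed $x$, the entropy bound holds with $d$ replaced by a quantity whose $\pi$-average is controlled by $\bar d$, for instance $\sum_y p_t^x(y)\,d_y$. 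Averaging over the starting point $x$ with respect to the uniform measure should do it, because $\sum_x p_t^x(y)=1$ by symmetry of the heat kernel, so the average is $\frac1{|V|}\sum_y d_y=\bar d$. If $F$ enters with a degree-dependent argument, I would use concavity of $a\mapsto aF(1/a)$-type expressions and Jensen's inequality to pass the average inside. In the unnormalized case everything collapses to $\log|V|\le \bar d\log2$, and $\bar d\le d$ is trivial. Making this averaging step rigorous, that is, checking that the proof of Theorem~\ref{thm:weighted-entropy} really localizes in this way, is the step I expect to be the main obstacle.

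For rigidity, first note that the hypercube $H_d$ satisfies $\CD(2,\infty)$ and has $|V|=2^d=2^{\bar d}$. Conversely, equality $|V|=2^{\bar d}$ combined with $\bar d\le d$ and the first inequality forces $|V|\le 2^{\bar d}\le 2^d$ to be tight in both places. In particular equality in $|V|\le2^d$ forces $\bar d=d$, so $G$ is $d$-regular, and every inequality in the entropy chain must be an equality. I would then invoke the equality characterization in Theorem~\ref{thm:weighted-entropy}, which the paper announces it will prove. Tightness of the $\Gamma$-estimate along the whole flow should force $\Gamma_2(f)=2\Gamma(f)$ for the extremal functions, which gives local structure: every pair of edges at distance two spans exactly one $4$-cycle and there are no triangles. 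That structure together with $d$-regularity and $|V|=2^d$ identifies $G$ as $H_d$. As a fallback, I would show that equality forces $\diam(G)=d$ and then apply the diameter rigidity of \cite{LMP}, which says the hypercube is the unique $\CD(2,\infty)$ graph attaining diameter $d$.
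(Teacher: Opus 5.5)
\textbf{The main gap: the bound $|V|\le 2^{\bar d}$.} Your specialization of Theorem~\ref{thm:weighted-entropy} to $m\equiv1$, unit weights and $K=2$ is correct, and with $F(1)=\log2$ it gives $|V|\le2^d$. The sharper bound $|V|\le2^{\bar d}$, however, is left as an expectation. The localization you propose is also not something the curvature input can deliver. For a fixed start $x$, the entropy production of $p_t^x$ is $\sum_{\{y,z\}\in E}(p_t^x(y)-p_t^x(z))(\log p_t^x(y)-\log p_t^x(z))$. By symmetry of the kernel, this is exactly the $x$-th summand of $\sum_{\{y,z\}\in E}J(p_t^y,p_t^z)$.

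Curvature controls only whole sums over that index. It acts through the reverse Poincar\'e inequality \eqref{eq:reverse-bound}, which follows from your forward bound via $P_t(f^2)-(P_tf)^2=2\int_0^tP_s\Gamma(P_{t-s}f)\dd s$. It yields $\chi^2(p_t^z\Vert p_t^y)\le 2K/(\ee^{2Kt}-1)$ for each oriented edge (Proposition~\ref{prop:chi-budget}), and the left side is a sum over the starting point. So the averaging over starting points must happen \emph{before} curvature is used, not after a pointwise estimate.

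The missing argument works with $\E_\pi H_t$ from the outset:
\begin{enumerate}
\item Its derivative is $\frac1N\sum_{\{x,y\}\in E}J(p_t^x,p_t^y)$ (Proposition~\ref{prop:entropy-production}).
\item Lemma~\ref{lem:contrast} gives $J(p,q)=4\E_\nu\ph(W)$ and $\chi^2(p\Vert q)+\chi^2(q\Vert p)=8\E_\nu W$. Jensen for the concave $\ph$ then gives $J(p_t^x,p_t^y)\le4\ph\bigl(1/(\ee^{4t}-1)\bigr)$ on every edge when $K=2$.
\item Summing over the $|E|=N\bar d/2$ edges gives $\frac{\dd}{\dd t}\E_\pi H_t\le2\bar d\,\ph\bigl(1/(\ee^{4t}-1)\bigr)$.
\item Integrating yields $\log N\le\bar d F(1)=\bar d\log2$.
\end{enumerate}
This is Theorem~\ref{thm:maintheorem} with $d_\pi=D_\pi=\bar d$ and $a=1$. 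Here $\bar d$ enters only as an edge count (the prefactor $2D_\pi$), not through a degree-dependent argument of $F$, so no Jensen step on $aF(1/a)$ is needed. The concave, logarithmically growing $\ph$ is indispensable: the linear bound $\ph(v)\le v$ leaves a non-integrable singularity at $t=0$.

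\textbf{A second gap: rigidity.} Equality $|V|=2^{\bar d}$ does not by itself force $\bar d=d$. Your reduction of this case to the regular one is therefore a non sequitur; regularity has to come out of the equality analysis. For $|V|=2^d$, appealing to the weighted equality characterization is legitimate (Theorem~\ref{thm:weighted-rigidity}, (ii)$\Rightarrow$(iii)). For $|V|=2^{\bar d}$, though, you need the equality case (i)$\Rightarrow$(iii) of the $\pi$-averaged inequality, which your plan never reaches.

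In the paper, equality makes the nonnegative integrand of the integrated difference vanish for all $t>0$. This forces $W_t$ to be constant on $E\times V$. Hence each ratio $P_t(x,z)/P_t(y,z)$ across an edge equals $\tau(t)$ or $\tau(t)^{-1}$, for an explicit $\tau(t)\to0$ as $t\downarrow0$ (Lemma~\ref{lem:equality-ratios}). Small-time expansion of the kernel then yields bipartiteness, $P_t(x,x)=P_t(y,y)$ and hence $d_x=d_y$ along edges, and the rate $K/2$. It also yields the radial formula $P_t(o,z)=P_t(o,o)\tau(t)^{\dist(o,z)}$. From this, every vertex has eccentricity $n$ and $|S_i(o)|=\binom ni$ (Lemmas~\ref{lem:equality-scale} and~\ref{lem:equality-shells}).

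That argument is what supplies $\diam(G)=d$ in your fallback, after which Theorem~\ref{thm:diameter-rigidity} applies. Your alternative route via $\Gamma_2(f)=2\Gamma(f)$ for extremal functions and a local four-cycle structure is not substantiated.
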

\begin{theorem}[Normalized cardinality bound and rigidity]\label{thm:normalized}
Let $G=(V,E)$ be a finite connected simple graph with $|V|\ge2$ and maximum degree $d$. If its normalized Laplacian satisfies $\CD(2/d,\infty)$, then
\begin{equation}\label{eq:normalized-final}
 |V|\le2^d.
\end{equation}
Equality holds if and only if $G\cong H_d$.
\end{theorem}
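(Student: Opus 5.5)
We will derive Theorem~\ref{thm:normalized} from Theorem~\ref{thm:weighted-entropy} by choosing weights that turn the weighted Laplacian into the normalized one. Set $b(x,y)=1$ for $x\sim y$ and $m(x)=d_x$. Then $L$ is the normalized Laplacian, $\Deg(x)=1$ for all $x$ (so $D=1$), the maximal combinatorial degree is $d$, $m_{\max}=d$, and $\vol(G)=2|E|$. With $K=2/d$ we get $Kd/(2D)=1$, and the remark after Theorem~\ref{thm:weighted-entropy} gives $\frac{2D}{K}F(1)=d\log 2$. Hence \eqref{eq:weighted-comparison} reads
\[
\log\frac{2|E|}{d}\le H(\pi)\le d\log 2 .
\]
We must then pass from $2|E|/d$ to $|V|$. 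Since $2|E|=\sum_x d_x\le d|V|$, this inequality points the wrong way. So the weighted bound alone yields only $2|E|\le d\,2^d$, and we have to use $H(\pi)$ itself rather than its lower bound.

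The second step uses a sharper lower bound for $H(\pi)$, where $\pi(x)=d_x/(2|E|)$. We would write
\[
H(\pi)=\log(2|E|)-\frac{1}{2|E|}\sum_x d_x\log d_x .
\]
Our aim is $H(\pi)\ge \log|V|$ up to a correction we can control; however, the uniform distribution maximizes entropy, so $H(\pi)\le\log|V|$, again the wrong direction. This means a genuine extra ingredient is needed. We would show first that $\CD(2/d,\infty)$ for the normalized Laplacian forces $G$ to be $d$-regular. If so, $\pi$ is uniform, $H(\pi)=\log|V|$, and the bound $|V|\le 2^d$ follows at once.

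The main obstacle is this regularity claim. We would attack it locally at a vertex $x$ of minimal degree $d_x<d$. The natural tool is the standard local test $\Gamma_2(f)(x)\ge K\,\Gamma(f)(x)$ applied to $f=\delta$-type or linear test functions supported near $x$. In particular we would try $f=\mathbf 1_{\{y\}}$ for a neighbour $y$, or $f$ equal to $1$ on one neighbour and $-1$ on another. For the normalized Laplacian, the curvature at $x$ is bounded above by quantities such as $\frac{1}{d_x}\bigl(\ldots\bigr)$ in terms of $d_x$ and the degrees of neighbours. The known estimate that normalized Bakry--\'Emery curvature at $x$ is at most roughly $\frac{2}{d_x}$-type expressions, with corrections from triangles, should give $\frac{2}{d}\le \kappa(x)$. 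We expect this to force $d_x=d$ and to exclude triangles at $x$.

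If exact regularity cannot be obtained this way, our fallback is to bound the deficit. In that case we would use the first inequality of \eqref{eq:weighted-comparison} sharpened to $H(\pi)\ge \log|V|-\Var$-type terms, and show that the curvature condition makes these terms vanish.

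For the rigidity part, we would appeal to the equality characterization of Theorem~\ref{thm:weighted-entropy}, which the paper promises to give. Equality $|V|=2^d$ forces $H(\pi)=d\log 2$. The equality case should then identify $G$ with the weighted hypercube having constant mass $m_0=d$ and edge weight $m_0K/2=1$, that is, $H_d$. For the converse, it remains to recall that $H_d$ has normalized curvature exactly $2/d$.
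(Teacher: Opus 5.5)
Your first step is right: with $m=d_x$ and $b=1$, Theorem~\ref{thm:weighted-entropy} only gives $\log(2|E|/d)\le d\log 2$. The repair you propose, however, rests on the claim that $\CD(2/d,\infty)$ for $L_{\mathrm n}$ forces $G$ to be $d$-regular, and that claim is false. Take the star $K_{1,d}$ with $d\ge2$.
\begin{itemize}
\item At a leaf $u$, put $f(u)=0$ and $f=s$ at the centre. Then $\Gamma(f)(u)=s^2/2$. Minimizing $\Gamma_2(f)(u)$ over the values on the other leaves (the minimum is at $2s$) gives exactly $s^2/d=\frac2d\,\Gamma(f)(u)$.
\item At the centre $v$, with $f(v)=0$ and leaf values $x_i$, one gets $\Gamma_2(f)(v)=\Gamma(f)(v)+\frac12\bigl(\frac1d\sum_i x_i\bigr)^2\ge\Gamma(f)(v)$.
\end{itemize}
So $K_{1,d}$ satisfies $\CD(2/d,\infty)$ without being regular. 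For $d=2$ this is $P_3$, which the paper explicitly has to allow.

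Three consequences follow. No choice of local test functions can prove regularity. Your fallback of making the deficit $\log|V|-H(\pi)$ vanish is impossible, since for stars $\pi$ is not uniform. And the rigidity step ``$|V|=2^d$ forces $H(\pi)=d\log2$'' is unsupported, because without regularity you only know $H(\pi)\le\log|V|$.

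The missing idea is to \emph{pay} for non-regularity rather than exclude it. That requires an upper bound on $H(\pi)$ sharper than $d\log 2$ on low-degree edges.
\begin{itemize}
\item The paper uses the edgewise bound $H(\pi)\le I(G)=\frac{2d}{\vol(G)}\sum_{\{x,y\}\in E}F\bigl(\sqrt{d_xd_y}/d\bigr)$ of Proposition~\ref{prop:normalized-entropy}. It comes from the geometric-mean refinement of the chi-square budget, and $F<\log 2$ on $(0,1)$.
\item It then proves the exact identity of Lemma~\ref{lem:counting},
\[
 d\log2-\log N=\frac1{\vol(G)}\sum_{\{x,y\}\in E}\cE_d(d_x,d_y)+\bigl(I(G)-H(\pi)\bigr)+(\eta-1-\log\eta),\qquad \eta=\frac{dN}{\vol(G)}.
\]
Here the conversion cost $\log N-H(\pi)$ is distributed over edges as $h_d(d_x)+h_d(d_y)$.
\item The scalar inequality $\cE_d(k,l)\ge0$ of Lemma~\ref{lem:edge-compensation} shows that the entropy saving dominates this cost edge by edge. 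The one exception is the degree pair $\{1,2\}$, and a direct $\Gamma_2$ computation (Lemma~\ref{lem:leaf-two}) confines that pair to $P_3$.
\item Regularity appears only in the equality case, from $\eta=1$. After that, $L_{\mathrm n}=\Delta_{\mathrm c}/d$ turns $\CD(2/d,\infty)$ into $\CD_{\Delta_{\mathrm c}}(2,\infty)$, and Theorem~\ref{thm:unnormalized} identifies $H_d$.
\end{itemize}
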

\subsection{Proof strategy and related work}

The proof is organized around the entropy of heat flow
$H(p_t^x)$ of the heat kernel. The curvature condition allows
us to compare this flow with that of the hypercube and to show that the
hypercube attains the maximum at every time. Its long-time
limit gives the volume bound, while equality in the comparison yields
hypercube rigidity. For background on the information-theoretic ideas used in our proof, see \cite{CoverThomas06,Taneja05}.

Beyond information theory itself, entropy methods have played a major role in
extremal combinatorics. Two landmark examples are Radhakrishnan's entropy proof
of Br\'egman's theorem for permanents \cite{Radhakrishnan97} and Kahn's sharp
bound for independent sets in regular bipartite graphs \cite{Kahn01}. These
results illustrate how entropy can yield sharp bounds in extremal counting,
a viewpoint that also motivates our approach.

Closer to the present geometric setting, probabilistic methods have also been
fruitful for Ollivier--Ricci curvature, from its random-walk and
optimal-transport formulation \cite{Ollivier09} to entropy arguments for
expanders \cite{Salez22} and recent results on volume growth and random-walk
displacement \cite{HutchcroftMunch25,Zhou}.

\subsection*{Use of artificial intelligence}
The authors used generative artificial-intelligence tools to assist in
exploring and refining the entropy-based approach presented in this article.
All mathematical arguments and computations were independently verified by
the authors, who take full responsibility for the content of the article.

\section{Preliminaries}\label{sec:heat}

Let $G=(V,b,m)$ be a connected, locally finite weighted graph. The edge weights
satisfy $b(x,y)=b(y,x)\ge0$ and $b(x,x)=0$, the vertex masses satisfy
$m(x)>0$, and $x\sim y$ means $b(x,y)>0$. Let $d_x$ denote the number of vertices $y$ with $b(x,y)>0$. Let $E$ denote the edge set, that is $ E=\{\{x,y\}:\ b(x,y)>0\}$.

The jump rates, weighted degrees, and Laplacian are
\begin{equation}\label{eq:weighted-setting}
 \begin{aligned}
 q(x,y)&=\frac{b(x,y)}{m(x)},\qquad \Deg(x)=\frac{1}{m(x)}\sum_{y\sim x}b(x,y),\\
 Lf(x)&=\frac{1}{m(x)}\sum_{y\sim x}b(x,y)(f(y)-f(x)).
 \end{aligned}
\end{equation}
The reversibility is encoded by
$m(x)q(x,y)=m(y)q(y,x)$.

For real functions $f,g$, define the Bakry--\'Emery operators
\begin{align*}
 \Gamma(f,g)(x)
 &=\frac12\sum_yq(x,y)(f(y)-f(x))(g(y)-g(x)),\\
 \Gamma_2(f,g)
 &=\frac12\bigl(L\Gamma(f,g)-\Gamma(f,Lg)-\Gamma(g,Lf)\bigr).
\end{align*}
We write $\Gamma(f)=\Gamma(f,f)$ and
$\Gamma_2(f)=\Gamma_2(f,f)$. The Bakry--\'Emery curvature condition
$\CD(K,\infty)$ means that
\[
 \Gamma_2(f)(x)\ge K\Gamma(f)(x)
 \qquad
 \text{for every real-valued function $f$ and every $x\in V$}.
\]
For the results below, we assume
\begin{equation}\label{eq:standing}
 D:=\sup_{x\in V}\Deg(x)<\infty,\qquad
 \CD(K,\infty)\text{ for some }K>0.
\end{equation}

We first recall the global diameter bound and its equality case.

\begin{theorem}[Diameter bound and rigidity]\label{thm:diameter-rigidity}
Let $G=(V,b,m)$ be a connected locally finite weighted graph satisfying
\eqref{eq:standing}. Then
\begin{equation}\label{eq:diameter-bound}
 \diam(G)\le\frac{2D}{K}.
\end{equation}
In particular, $V$ is finite and $D=\max_x\Deg(x)$. If, in addition,
$q(x,y)=q_0$ for every oriented edge and equality holds in
\eqref{eq:diameter-bound}, then $q_0=K/2$, and $G$ is the
$(2D/K)$-dimensional hypercube.
\end{theorem}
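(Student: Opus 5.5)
The diameter bound is the discrete Bonnet--Myers theorem of \cite{LMP2018}, and the rigidity statement is the equality case of \cite{LMP}. Since the theorem is presented as a recollection, the plan is to reduce to those results and check that the normalizations match. For the bound itself, the route is gradient estimates along the heat semigroup $P_t=\ee^{tL}$. First, from $\CD(K,\infty)$ derive the pointwise estimate
\[
\Gamma(P_tf)\le \ee^{-2Kt}P_t\Gamma(f)
\]
via the standard interpolation $s\mapsto P_s\Gamma(P_{t-s}f)$. This requires justifying the differentiation, which is legitimate because $D<\infty$ makes $L$ a bounded operator on $\ell^\infty$.

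Next, apply this to $f=\dist(x_0,\cdot)$, which is $1$-Lipschitz, so that $\Gamma(f)\le \Deg/2\le D/2$. Using $\partial_tP_tf=LP_tf$ together with the Cauchy--Schwarz bound
\[
|Lg(x)|\le \sqrt{2\Deg(x)\Gamma(g)(x)},
\]
we get $|LP_tf|\le \sqrt{2D\cdot \ee^{-2Kt}D/2}=D\ee^{-Kt}$. Integrating in time gives $|P_tf(x)-f(x)|\le D/K$ for all $t$. Letting $t\to\infty$ on the finite-measure limit, or more carefully comparing $P_tf(x)$ and $P_tf(y)$ for two points $x,y$, should give $\dist(x,y)\le 2D/K$. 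The comparison step runs as follows:
\[
|P_tf(x)-P_tf(y)|\le \dist(x,y)\sup\sqrt{2\Gamma(P_tf)}\to 0
\]
since $\Gamma(P_tf)\le \ee^{-2Kt}D/2$. Combined with the two bounds $|P_tf-f|\le D/K$ at $x$ and at $y$, this yields $\dist(x,y)\le 2D/K$. Finiteness of $V$ then follows from local finiteness and bounded diameter, and the supremum $D$ becomes a maximum.

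The rigidity part will be the main obstacle. With $q\equiv q_0$, equality forces every inequality above to be sharp along a diametral pair. In particular, $\Gamma(f)=D/2$ requires every neighbor to change the distance by exactly $\pm1$, and the Cauchy--Schwarz step requires all neighbors to move in the same direction. Extracting this pointwise sharpness asymptotically in $t$ is the delicate part, so I would instead invoke the characterization from \cite{LMP}. That result shows that for unweighted graphs (constant rates) satisfying $\CD(K,\infty)$ with diameter $2D/K$, the graph is a hypercube $H_n$, where $n$ equals the degree. Indeed, $H_n$ with rate $q_0$ has curvature exactly $2q_0$ and $D=nq_0$. Equality $n=2D/K=2nq_0/K$ then forces $q_0=K/2$ once the degree is identified with the diameter, completing the proof.
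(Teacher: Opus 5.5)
Your proposal is correct and matches the paper, which gives no argument of its own: it cites \cite[Corollary~2.2]{LMP2018} for the bound, and your derivation via $\Gamma(P_tf)\le\ee^{-2Kt}P_t\Gamma(f)$ and $|LP_tf|\le D\ee^{-Kt}$ is the standard semigroup argument behind that corollary; it cites \cite[Theorem~1.4]{LMP} for rigidity, applied after the rescaling $L=q_0\Delta_{\mathrm c}$, $\CD_L(K,\infty)\iff\CD_{\Delta_{\mathrm c}}(K/q_0,\infty)$, that you implicitly use. Two small repairs are needed: the edge comparison should read $|g(u)-g(v)|\le\sqrt{2\Gamma(g)(u)/q(u,v)}$, which still gives $P_tf(x)-P_tf(y)\to0$ because only the finitely many edges of a fixed path enter; and, since $V$ is not yet known to be finite, you should use the bounded $1$-Lipschitz truncation $\min\{\dist(x_0,\cdot),\dist(x_0,y)\}$ rather than $\dist(x_0,\cdot)$.
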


The diameter bound follows from \cite[Corollary~2.2]{LMP2018} and the equality case follows from \cite[Theorem~1.4]{LMP}. 

\begin{example}\label{ex:cube-model}
The $n$-dimensional hypercube is the Cartesian product of $n$ copies $K_2$. On $H_n$, take constant mass $m_0>0$ and edge weights $m_0K/2$. This graph satisfies $\CD(K,\infty)$. Its heat kernel is
\begin{equation*}
 P_t(x,z)
 =\left(\frac{1+\ee^{-Kt}}2\right)^n
  \tanh\left(\frac{Kt}{2}\right)^{\dist(x,z)}.
\end{equation*}
For this heat kernel formula, see \cite[Lemma~2.1]{IvanisviliVanHandelVolberg20}. We will give a direct computation in the proof of Lemma~\ref{lem:equality-shells}.
\end{example}

By Theorem~\ref{thm:diameter-rigidity}, $V$ is finite, so we may set
\[
 \vol(G)=\sum_xm(x),\qquad \pi(x)=\frac{m(x)}\vol(G),
\]
and use $d=\max_xd_x$.  All entropy expressions below are therefore on a finite state space; unless stated otherwise, we consider the nontrivial case $\vert V\vert\ge2$.

Write $P_t=\ee^{tL}$ and $p_t^x(z)=P_t\ones_{\{z\}}(x)=P_t(x,z)$. The heat kernel has the following properties:
\begin{enumerate}[label=(\roman*)]
\item \emph{Conservativity and positivity.} For every $x,z\in V$ and $t\ge0$,
\[
 p_t^x(z)\ge0,\qquad \sum_{z\in V}p_t^x(z)=1.
\]
\item \emph{Strict positivity.} Connectedness gives, for every $x,z\in V$ and $t>0$,
\[
 p_t^x(z)>0.
\]
\item \emph{Balance.} Reversibility gives
\begin{equation*}
 m(x)P_t(x,z)=m(z)P_t(z,x).
\end{equation*}
\item \emph{Stationarity.} For every $z\in V$ and $t\ge0$,
\[
 \sum_{x\in V}\pi(x)P_t(x,z)=\pi(z),\qquad \pi P_t=\pi.
\]
\item \emph{Equilibrium limit.} For every $x,z\in V$,
\begin{equation}\label{eq:equilibrium-limit}
 P_t(x,z)\to\pi(z)\qquad(t\to\infty).
\end{equation}
\end{enumerate}

The following gradient estimate and reverse Poincaré inequality follow from \cite[Proposition~2.1]{Bakry97}. For related results on graph Laplacians, see \cite{KM2020Elliptic,LinLiu18,LiuNorbert18}.
\begin{lemma}[Gradient estimate and reverse Poincaré inequality ]\label{lem:reverse}
Under \eqref{eq:standing}, for every bounded real function $f$ and $t>0$,
\begin{align}
 \Gamma(P_tf)&\le\ee^{-2Kt}P_t\Gamma(f),\notag\\
 \Gamma(P_tf)&\le\frac{K}{\ee^{2Kt}-1}
 \bigl(P_t(f^2)-(P_tf)^2\bigr).\label{eq:reverse-bound}
\end{align}
\end{lemma}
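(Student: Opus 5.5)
Both inequalities follow from the standard Bakry--\'Emery semigroup interpolation. The one technical point is justifying the differentiations, and that is harmless here. By Theorem~\ref{thm:diameter-rigidity}, $V$ is finite. Hence $L$ is a finite matrix, $P_t=\ee^{tL}$ is a matrix exponential, and every function is bounded. So all maps $s\mapsto P_s g_s$ below are smooth in $s$, and we may differentiate freely using $\frac{\dd}{\dd s}P_s=P_sL=LP_s$. The proof therefore reduces to two algebraic identities for the carr\'e du champ:
\[
 L(g^2)=2gLg+2\Gamma(g),\qquad LΓ(g)-2\Gamma(g,Lg)=2\Gamma_2(g).
\]
The first is a direct expansion of $(g(y)-g(x))^2$. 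The second is the definition of $\Gamma_2$.

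\emph{Gradient estimate.} Fix $t>0$ and a function $f$, and put $g_s=P_{t-s}f$ for $0\le s\le t$, so that $\partial_s g_s=-Lg_s$. Consider
\[
 \Phi(s)=\ee^{-2Ks}P_s\Gamma(g_s).
\]
Differentiating gives
\[
 \Phi'(s)=\ee^{-2Ks}P_s\bigl(L\Gamma(g_s)-2\Gamma(g_s,Lg_s)-2K\Gamma(g_s)\bigr)
 =2\ee^{-2Ks}P_s\bigl(\Gamma_2(g_s)-K\Gamma(g_s)\bigr).
\]
This is $\ge0$, because $\CD(K,\infty)$ holds pointwise and $P_s$ is positivity preserving (property (i)). Hence $\Phi(0)\le\Phi(t)$, which reads $\Gamma(P_tf)\le\ee^{-2Kt}P_t\Gamma(f)$.

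\emph{Reverse Poincar\'e inequality.} With the same $g_s$, let $\Psi(s)=P_s(g_s^2)$. Then
\[
 \Psi'(s)=P_s\bigl(L(g_s^2)-2g_sLg_s\bigr)=2P_s\Gamma(g_s).
\]
Integrating from $0$ to $t$ gives
\[
 P_t(f^2)-(P_tf)^2=2\int_0^tP_s\Gamma(P_{t-s}f)\dd s.
\]
Now apply the gradient estimate just proved to the function $P_{t-s}f$ at time $s$:
\[
 \Gamma(P_tf)=\Gamma(P_sP_{t-s}f)\le\ee^{-2Ks}P_s\Gamma(P_{t-s}f),
\]
that is, $P_s\Gamma(P_{t-s}f)\ge\ee^{2Ks}\Gamma(P_tf)$. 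Substituting into the integral yields
\[
 P_t(f^2)-(P_tf)^2\ge2\Gamma(P_tf)\int_0^t\ee^{2Ks}\dd s=\frac{\ee^{2Kt}-1}{K}\,\Gamma(P_tf).
\]
Rearranging gives \eqref{eq:reverse-bound}.

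There is no real obstacle. The only points needing care are the sign bookkeeping in $\Phi'$, namely the factor $2$ coming from the two cross terms $\Gamma(g,Lg)$, and the observation that finiteness of $V$ removes all domain and boundedness issues.
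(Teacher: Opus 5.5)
Your argument is correct. It is the standard Bakry--\'Emery interpolation: you differentiate $s\mapsto\ee^{-2Ks}P_s\Gamma(P_{t-s}f)$ and $s\mapsto P_s\bigl((P_{t-s}f)^2\bigr)$, then feed the first estimate into the second, which is what underlies the result the paper simply cites from \cite[Proposition~2.1]{Bakry97}. Using finiteness of $V$ from Theorem~\ref{thm:diameter-rigidity} is not circular, since that bound is imported from \cite{LMP2018}; in any case $D<\infty$ already makes $L$ and $\Gamma$ bounded on bounded functions, so your differentiations go through without it.
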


\section{Entropies and  their evolution}\label{sec:entropy}

Let $q$ be a probability measure with full support. For any probability
measure $p$, define its entropy and its relative entropy with respect to $q$
(the Kullback--Leibler divergence) by
\begin{equation*}
 \begin{aligned}
 H(p)&=-\sum_zp(z)\log p(z),\\
 \D(p\Vert q)&=\sum_zp(z)\log\frac{p(z)}{q(z)},
 \end{aligned}
\end{equation*}
By convention, $0\log 0=0$.

The elementary inequality $-\log r\ge1-r$ implies $\D(p\Vert q)\ge0$ by summation; equality holds exactly when $p=q$.
Define the entropy of heat flow by
\begin{equation*}
 H_t(x)=H(p_t^x).
\end{equation*}

\begin{proposition}\label{prop:entropy-endpoints}
The following identities hold:
\begin{enumerate}[label=(\roman*)]
\item $H_0(x)=0$ and  $\D(p_0^x\Vert\pi)=-\log\frac{m(x)}{\vol(G)}$;
\item $H_t(x)\to H(\pi)$ and
$\D(p_t^x\Vert\pi)\to0$, as $t\to\infty$.
\end{enumerate}
For any initial probability law $\mu_0$, put $\mu_t=\mu_0P_t$ and $f_t=\mu_t/\pi$. At positive times,
\begin{equation}\label{eq:relative-dissipation}
 \frac{\dd}{\dd t}\D(\mu_t\Vert\pi)
 =-\frac{1}{\vol(G)}\sum_{\{x,y\}\in E}b(x,y)
 (f_t(y)-f_t(x))(\log f_t(y)-\log f_t(x))\le0.
\end{equation}
Consequently each $t\mapsto\D(p_t^x\Vert\pi)$ is nonincreasing.
\end{proposition}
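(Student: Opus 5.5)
The endpoint identities are direct computations, and the dissipation formula follows from differentiating under the finite sum and applying the discrete Green formula given by reversibility.

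For (i), note that $p_0^x=\ones_{\{x\}}$ because $P_0$ is the identity. With the convention $0\log0=0$ this gives $H_0(x)=-1\cdot\log 1=0$. It also gives $\D(p_0^x\Vert\pi)=\log\bigl(1/\pi(x)\bigr)=-\log\frac{m(x)}{\vol(G)}$.

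For (ii), $V$ is finite by Theorem~\ref{thm:diameter-rigidity}, and $p_t^x(z)\to\pi(z)>0$ for each $z$ by \eqref{eq:equilibrium-limit}. The maps $r\mapsto -r\log r$ and $r\mapsto r\log(r/\pi(z))$ are continuous on $[0,\infty)$, so summing over finitely many $z$ gives $H_t(x)\to H(\pi)$ and $\D(p_t^x\Vert\pi)\to \D(\pi\Vert\pi)=0$.

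For the dissipation identity, I will first identify the evolution of $f_t$. By definition $\mu_t(z)=\sum_x\mu_0(x)P_t(x,z)$. The balance property gives $P_t(x,z)/\pi(z)=P_t(z,x)/\pi(x)$, and hence
\[
f_t(z)=\sum_x P_t(z,x)\frac{\mu_0(x)}{\pi(x)}=(P_tf_0)(z).
\]
Therefore $\partial_tf_t=Lf_t$.

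For $t>0$, strict positivity gives $\mu_t(z)>0$ for all $z$, because $\mu_0$ is a probability measure and $P_t(x,z)>0$. So $\log f_t$ is finite and smooth in $t$. Writing $\D(\mu_t\Vert\pi)=\sum_z\pi(z)f_t\log f_t$, I differentiate to get
\[
\frac{\dd}{\dd t}\D(\mu_t\Vert\pi)=\sum_z\pi(z)(Lf_t)(z)\bigl(\log f_t(z)+1\bigr).
\]
The term with the constant $1$ vanishes, since $\sum_z\pi(z)Lf(z)=\frac1{\vol(G)}\sum_{z,y}b(z,y)(f(y)-f(z))=0$ by the symmetry of $b$. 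For the remaining term I use the Green formula $\sum_z m(z)\,Lf(z)\,g(z)=-\frac12\sum_{z,y}b(z,y)(f(y)-f(z))(g(y)-g(z))$. This is obtained by symmetrizing in $z$ and $y$, using $b(z,y)=b(y,z)$. Taking $g=\log f_t$ and dividing by $\vol(G)$ gives \eqref{eq:relative-dissipation}, with the factor $\frac12$ absorbed by passing from ordered pairs to unordered edges.

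Each summand is nonnegative because $\log$ is increasing, so $(a-b)(\log a-\log b)\ge0$. Hence the derivative is $\le0$. Taking $\mu_0=\ones_{\{x\}}$ shows that $\D(p_t^x\Vert\pi)$ is nonincreasing on $(0,\infty)$. Monotonicity extends to $[0,\infty)$ by continuity of $t\mapsto\D(p_t^x\Vert\pi)$ at $t=0$; this holds because $P_t\to I$ and the summands are continuous on $[0,\infty)$.

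The only point needing care is this behaviour at $t=0$, where $\log f_t$ may be infinite. I handle it by stating the derivative only for $t>0$ and using continuity to reach $t=0$.
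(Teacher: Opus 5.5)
Your proof is correct and follows the paper's argument essentially step for step. You derive $f_t=P_tf_0$ from the balance relation, differentiate $\sum_z\pi(z)f_t\log f_t$, kill the constant term by stationarity (your symmetry-of-$b$ computation), and obtain \eqref{eq:relative-dissipation} by pairing the two orientations of each edge; the continuity argument extending monotonicity to $t=0$ is a harmless refinement that the paper leaves implicit.
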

\begin{proof}
The endpoint assertions follow from \eqref{eq:equilibrium-limit} and the continuity of $-s\log s$ on $[0,1]$. 

Reversibility gives $f_t=P_tf_0$, and $f_t>0$ for $t>0$. Since $\partial_tf_t=Lf_t$, differentiation yields
\[
 \frac{\dd}{\dd t}\sum_x\pi(x)f_t(x)\log f_t(x)
 =\sum_x\pi(x)Lf_t(x)+\sum_x\pi(x)\log f_t(x)Lf_t(x).
\]
Stationarity of \(\pi\) gives
\[
 \sum_x\pi(x)Lf_t(x)=0.
\]
Pairing the two orientations of every edge in the remaining term gives \eqref{eq:relative-dissipation}. The summands before the minus sign are nonnegative because logarithm is increasing. Finally take $\mu_0=\ones_x$ to obtain the claimed monotonicity.
\end{proof}

For positive probability measures, define the symmetric relative entropy
\begin{equation*}
 J(p,q):=\D(p\Vert q)+\D(q\Vert p).
\end{equation*}

\begin{proposition}\label{prop:entropy-production}
For every $t>0$,
\begin{equation}\label{eq:entropy-production}
 \frac{\dd}{\dd t}\bigl(\E_\pi H_t\bigr)
 =\frac{1}{\vol(G)}\sum_{\{x,y\}\in E}b(x,y)J(p_t^x,p_t^y)\ge0.
\end{equation}
In particular, $t\mapsto\E_\pi H_t$ increases from $0$ to $H(\pi)$.
\end{proposition}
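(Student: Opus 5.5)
We differentiate $\E_\pi H_t=\sum_x\pi(x)H(p_t^x)$ directly, using only the Kolmogorov equations and reversibility. At positive times every $p_t^x(z)>0$, so $H_t(x)$ is smooth in $t$. Since $\sum_z p_t^x(z)=1$, the term coming from differentiating the $\log$ vanishes and
\[
 \frac{\dd}{\dd t}H_t(x)=-\sum_z \partial_t p_t^x(z)\log p_t^x(z).
\]
For the time derivative we use the backward equation in the variable $x$: $\partial_tP_t(x,z)=LP_t(\cdot,z)(x)=\sum_y q(x,y)\bigl(P_t(y,z)-P_t(x,z)\bigr)$.

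Inserting this and averaging against $\pi$ gives
\[
 \frac{\dd}{\dd t}\E_\pi H_t=-\frac1{\vol(G)}\sum_x\sum_y b(x,y)\sum_z\bigl(p_t^y(z)-p_t^x(z)\bigr)\log p_t^x(z),
\]
using $\pi(x)q(x,y)=b(x,y)/\vol(G)$. The coefficient $b(x,y)$ is symmetric, so we pair the ordered pairs $(x,y)$ and $(y,x)$ for each edge $\{x,y\}$. The contribution of the edge is then
\[
 -\sum_z\bigl[(p^y-p^x)\log p^x+(p^x-p^y)\log p^y\bigr](z)=\sum_z\bigl(p^y(z)-p^x(z)\bigr)\bigl(\log p^y(z)-\log p^x(z)\bigr).
\]
This equals $\D(p_t^x\Vert p_t^y)+\D(p_t^y\Vert p_t^x)=J(p_t^x,p_t^y)$, which proves \eqref{eq:entropy-production}. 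Nonnegativity holds termwise because $\log$ is increasing, and also because $\D\ge0$. The only point requiring care is the bookkeeping of the double counting: each unordered edge appears twice in the ordered sum over $(x,y)$, and pairing these two copies absorbs the factor $\tfrac12$ that would otherwise appear. No curvature is needed for this part.

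For the final assertion, at $t=0$ we have $H_0(x)=0$ for every $x$ by Proposition~\ref{prop:entropy-endpoints}(i), so $\E_\pi H_0=0$. Since $V$ is finite, $H_t(x)\to H(\pi)$ by Proposition~\ref{prop:entropy-endpoints}(ii), and hence $\E_\pi H_t\to H(\pi)$. Continuity of $t\mapsto\E_\pi H_t$ on $[0,\infty)$ follows from continuity of $P_t$ and of $s\mapsto -s\log s$ on $[0,1]$. Combining this with the nonnegative derivative on $(0,\infty)$ and the mean value theorem, the function is nondecreasing and runs from $0$ to $H(\pi)$.
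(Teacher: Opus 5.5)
Your proposal is correct and follows essentially the same route as the paper: differentiate with the backward heat equation in $x$, use $\sum_z p_t^x(z)=1$ to drop the constant term, rewrite $\pi(x)q(x,y)=b(x,y)/\vol(G)$, and pair the two orientations of each edge to get $J(p_t^x,p_t^y)$, with the endpoint values supplied by Proposition~\ref{prop:entropy-endpoints}. Your continuity-at-$t=0$ and mean-value argument for monotonicity on $[0,\infty)$ spells out a step that the paper leaves implicit.
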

\begin{proof}
The heat equation in the variable $x$ is
\[
 \partial_tp_t^x(z)=\sum_yq(x,y)(p_t^y(z)-p_t^x(z)).
\]
Since $\sum_zp_t^x(z)=1$, we have
\begin{align*}
 \frac{\dd}{\dd t}\bigl(\E_\pi H_t\bigr)
 &=-\sum_{x,z}\pi(x)\bigl(1+\log p_t^x(z)\bigr)
       \partial_tp_t^x(z)\\
 &=-\sum_x\pi(x)\sum_z\partial_tp_t^x(z)
   -\sum_{x,z}\pi(x)\log p_t^x(z)
       \sum_yq(x,y)\bigl(p_t^y(z)-p_t^x(z)\bigr)\\
 &=\sum_{x,y,z}\pi(x)q(x,y)
       \bigl(p_t^x(z)-p_t^y(z)\bigr)\log p_t^x(z)\\
 &=\frac{1}{\vol(G)}\sum_{\{x,y\}\in E}b(x,y)\sum_z
       \bigl(p_t^x(z)-p_t^y(z)\bigr)
       \bigl(\log p_t^x(z)-\log p_t^y(z)\bigr)\\
 &=\frac{1}{\vol(G)}\sum_{\{x,y\}\in E}b(x,y)
       J\bigl(p_t^x,p_t^y\bigr)\ge0.
\end{align*}
The last equality follows from the definition of \(J\), and the endpoint assertions follow from Proposition~\ref{prop:entropy-endpoints}.
\end{proof}

The rest of the argument bounds the right side of \eqref{eq:entropy-production} by a function with finite integral. That integral limits the equilibrium entropy. 

\section{Nonlinear comparison estimates from curvature}\label{sec:divergences}

For positive probability measures, the chi-square divergence
\cite[Equation~(17.35)]{CoverThomas06} is
\begin{equation*}
 \chi^2(p\Vert q)=\sum_z\frac{(p(z)-q(z))^2}{q(z)}.
\end{equation*}

\begin{proposition}\label{prop:chi-budget}
Under \eqref{eq:standing}, for an oriented edge $(x,y)$ and $t>0$,
\begin{equation}\label{eq:chi-budget}
 \chi^2(p_t^y\Vert p_t^x)
 \le\frac{2K}{q(x,y)(\ee^{2Kt}-1)}.
\end{equation}
\end{proposition}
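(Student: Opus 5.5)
Fix $t>0$ and $z\in V$ and apply the reverse Poincaré inequality \eqref{eq:reverse-bound} to the indicator $f=\ones_{\{z\}}/p_t^x(z)$. Strictly, we will apply it to a whole family of functions and sum, reducing the chi-square divergence to a variance of the semigroup. The first step is to rewrite $\chi^2(p_t^y\Vert p_t^x)$ as a quantity controlled by $\Gamma(P_tf)(x)$ for a single cleverly chosen $f$. Take
\[
 f(z)=\frac{p_t^y(z)-p_t^x(z)}{p_t^x(z)}\quad\text{(frozen at time $t$)},
\]
and use instead the function $g=P_t h$ with $h(w)=\sum_z \ones_{\{z\}}(w)\,c_z$, where $c_z:=\frac{p_t^y(z)-p_t^x(z)}{p_t^x(z)}$. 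Then $P_th(v)=\sum_z p_t^v(z)c_z$. Hence
\[
 P_th(y)-P_th(x)=\sum_z\bigl(p_t^y(z)-p_t^x(z)\bigr)c_z=\chi^2(p_t^y\Vert p_t^x).
\]
Since the oriented edge $(x,y)$ contributes one term to $\Gamma$, we have $\Gamma(P_th)(x)\ge\frac12 q(x,y)\bigl(P_th(y)-P_th(x)\bigr)^2=\frac12 q(x,y)\,\chi^2(p_t^y\Vert p_t^x)^2$.

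Next we bound the right side of \eqref{eq:reverse-bound} at the vertex $x$. Since $P_t(h^2)(x)-(P_th(x))^2\le P_t(h^2)(x)=\sum_z p_t^x(z)c_z^2$, and $\sum_zp_t^x(z)c_z^2=\sum_z\frac{(p_t^y(z)-p_t^x(z))^2}{p_t^x(z)}=\chi^2(p_t^y\Vert p_t^x)$, we obtain
\[
 \tfrac12 q(x,y)\,\chi^2(p_t^y\Vert p_t^x)^2\le\frac{K}{\ee^{2Kt}-1}\,\chi^2(p_t^y\Vert p_t^x).
\]
We note that $P_th(x)=\sum_z(p_t^y(z)-p_t^x(z))=0$, so the variance equals $P_t(h^2)(x)$ exactly and nothing is lost. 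If $\chi^2=0$ there is nothing to prove. Otherwise we divide by $\chi^2(p_t^y\Vert p_t^x)$, which yields exactly \eqref{eq:chi-budget}.

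The main subtlety is choosing the test function so that the same divergence appears on both sides: it must be linear in $h$ on the left and quadratic on the right. Strict positivity of $p_t^x$ for $t>0$ guarantees that $c_z$ is well defined, and $h$ is bounded because $V$ is finite, so Lemma~\ref{lem:reverse} applies. The choice of $t$ in defining $c_z$ is harmless, since the lemma holds for each fixed bounded $f$ at each fixed time.
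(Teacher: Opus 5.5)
Your proof is correct and is essentially the paper's argument: the same test function $c_z=p_t^y(z)/p_t^x(z)-1$, the single-edge bound $\Gamma(P_th)(x)\ge\frac12 q(x,y)\bigl(P_th(y)-P_th(x)\bigr)^2$, and the reverse Poincar\'e inequality with $\Var_{p_t^x}(h)=\chi^2(p_t^y\Vert p_t^x)$, followed by division by $\chi^2$. The opening false start, about indicators and summing over a family of functions, is never used and can simply be deleted.
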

\begin{proof}
$q(x,y)(P_tf(y)-P_tf(x))^2\leq 2\Gamma(P_tf)(x)$ and \eqref{eq:reverse-bound} give
\[
 \left[\sum_z(p_t^y(z)-p_t^x(z))f(z)\right]^2
 \le\frac{2K}{q(x,y)(\ee^{2Kt}-1)}\Var_{p_t^x}(f).
\]
At this fixed edge and time choose $f(z)=p_t^y(z)/p_t^x(z)-1$. This is a well-defined function by positivity. Set $X=\chi^2(p_t^y\Vert p_t^x)$. Then
\[
 \E_{p_t^x}f=0,\qquad \Var_{p_t^x}(f)=X,\qquad
 \E_{p_t^y}f-\E_{p_t^x}f=X.
\]
Thus $X^2\le 2KX/[q(x,y)(\ee^{2Kt}-1)]$, which proves the result, including $X=0$. 
\end{proof}

\begin{lemma}\label{lem:contrast}
For positive probability measures $p,q$, put
\begin{equation}\label{eq:contrast}
 \nu=\frac{p+q}{2},\qquad u=\frac{p-q}{p+q},\qquad
 W=\frac{u^2}{1-u^2},\qquad
 \ph(v)=\sqrt{\frac{v}{1+v}}\,
 \artanh\sqrt{\frac{v}{1+v}}.
\end{equation}
Then $\E_\nu u=0$ and
\begin{equation}\label{eq:contrast-identities}
 \begin{aligned}
 &\chi^2(p\Vert q)=4\E_\nu[(1+u)W],\quad&
 &\chi^2(q\Vert p)=4\E_\nu[(1-u)W],\\
 &\chi^2(p\Vert q)+\chi^2(q\Vert p)=8\E_\nu W,\quad&
 &J(p,q)=4\E_\nu\ph(W).
 \end{aligned}
\end{equation}
Moreover, $\ph$ is strictly increasing and strictly concave
on $[0,\infty)$.
\end{lemma}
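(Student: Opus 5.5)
The identities come from rewriting everything pointwise in terms of $\nu$ and $u$; the claims about $\ph$ then reduce to elementary calculus in the variable $s=\sqrt{v/(1+v)}$.

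\textbf{Pointwise rewriting.} By definition $p=\nu(1+u)$ and $q=\nu(1-u)$, and $|u|<1$ because $p,q>0$. Then
\[
\E_\nu u=\tfrac12\textstyle\sum_z\bigl(p(z)-q(z)\bigr)=0 .
\]

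\textbf{The chi-square identities.} We have $(p-q)^2/q=4\nu u^2/(1-u)$, and
\[
\frac{u^2}{1-u}=\frac{(1+u)u^2}{1-u^2}=(1+u)W .
\]
This gives the formula for $\chi^2(p\Vert q)$. The formula for $\chi^2(q\Vert p)$ follows by symmetry, i.e.\ $u\mapsto -u$. Adding the two gives $8\E_\nu W$.

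\textbf{The identity for $J$.} Here $J(p,q)=\sum_z(p-q)(\log p-\log q)$, and pointwise
\[
(p-q)(\log p-\log q)=2\nu u\log\frac{1+u}{1-u}=4\nu\, u\artanh u .
\]
The function $u\artanh u$ is even, so it equals $|u|\artanh|u|$. Solving $W=u^2/(1-u^2)$ gives $|u|=\sqrt{W/(1+W)}$, hence $u\artanh u=\ph(W)$ and $J=4\E_\nu\ph(W)$.

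\textbf{Monotonicity and concavity of $\ph$.} Substitute $s=\sqrt{v/(1+v)}\in[0,1)$, so that $v=s^2/(1-s^2)$. Write $g(s)=s\artanh s$. Then
\[
g'(s)=\artanh s+\frac{s}{1-s^2}>0\quad(s>0),
\]
and $s$ is increasing in $v$, so $\ph$ is strictly increasing.

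For concavity, compute
\[
\frac{\dd v}{\dd s}=\frac{2s}{(1-s^2)^2}.
\]
Hence
\[
\ph'(v)=\frac{g'(s)(1-s^2)^2}{2s}=\frac{(1-s^2)^2\artanh s}{2s}+\frac{1-s^2}{2}=:h(s).
\]
Strict concavity of $\ph$ is equivalent to $h$ being strictly decreasing on $(0,1)$. The term $(1-s^2)/2$ is strictly decreasing. For the other term, $\artanh s/s=\sum_k s^{2k}/(2k+1)$, so
\[
\frac{(1-s^2)^2\artanh s}{s}=(1-s^2)^2\sum_{k\ge0}\frac{s^{2k}}{2k+1}.
\]
Its derivative in $x=s^2$ is
\[
-2(1-x)\sum_k\frac{x^k}{2k+1}+(1-x)^2\sum_k\frac{k\,x^{k-1}}{2k+1}.
\]
Dividing by $1-x$ and expanding, the coefficient of $x^j$ is
\[
-\frac{2}{2j+1}+\frac{j+1}{2j+3}-\frac{j}{2j+1}=\frac{j+1}{2j+3}-\frac{j+2}{2j+1}.
\]
This is negative, since $(j+1)(2j+1)<(j+2)(2j+3)$. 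So this term is decreasing as well, and $h$ is strictly decreasing.

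At $v=0$ concavity extends by continuity. Near $0$ one has $\ph(v)\sim v$, and $\ph'(0^+)=h(0^+)=1$ is finite, so no issue arises there.

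\textbf{Main obstacle.} The only real difficulty is the concavity step, namely the sign of the derivative of $(1-s^2)^2\artanh s/s$. The power-series coefficient comparison above handles it.
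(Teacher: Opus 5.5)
Your proof is correct. The identities are the ``direct computation'' the paper leaves to the reader, and your pointwise rewriting $p=\nu(1+u)$, $q=\nu(1-u)$, $\log(p/q)=2\artanh u$, $|u|=\sqrt{W/(1+W)}$ carries it out cleanly.

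For the properties of $\ph$ you take a genuinely different route. The paper first derives the integral representation $\ph(v)=\int_0^1 \frac{v}{1+(1-s^2)v}\dd s$, which comes from $\int_0^1 r^2(1-r^2s^2)^{-1}\dd s=r\artanh r$ with $r=\sqrt{v/(1+v)}$. It then differentiates under the integral sign, so $\ph'>0$ and $\ph''<0$ are read off in one line from sign-definite integrands. You instead reparametrize by $s=\sqrt{v/(1+v)}$ and get $\ph'(v)=\frac{(1-s^2)^2\artanh s}{2s}+\frac{1-s^2}{2}$ from the chain rule. You then show this is strictly decreasing by comparing power-series coefficients. The coefficient $\frac{j+1}{2j+3}-\frac{j+2}{2j+1}<0$ and your handling of the endpoint $v=0$ are both right.

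Your argument is fully elementary and avoids differentiating under the integral. However, it is longer and yields only monotonicity of $\ph'$, not a closed formula for $\ph''$.

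The paper's representation pays for itself later in the paper:
\begin{enumerate}
\item it gives $\ph(v)\le v$ in \eqref{eq:phi-growth};
\item it is the input for the Tonelli computation of the logarithmic formula for $F$ in Lemma~\ref{lem:F}(i);
\item its explicit $\ph''$ is what gives $\ph(v)-v\ph'(v)>0$ in Lemma~\ref{lem:F}(iii).
\end{enumerate}
For the third point, your strictly decreasing $\ph'$ would also suffice, via the mean value theorem.
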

\begin{proof}
The identities follow by direct computation. For the properties of $\ph$, the
integral representation
\begin{equation}\label{eq:phi-integral}
 \ph(v)=\int_0^1\frac{v}{1+(1-s^2)v}\dd s
\end{equation}
is obtained by integrating $(1-r^2s^2)^{-1}$ with $r=\sqrt{v/(1+v)}$. Therefore
\[
 \ph'(v)=\int_0^1\frac{\dd s}{[1+(1-s^2)v]^2}>0,\qquad
 \ph''(v)=-2\int_0^1\frac{1-s^2}{[1+(1-s^2)v]^3}\dd s<0.
\]
\end{proof}

\begin{proposition}[Geometric-mean refinement]\label{prop:geometric-chi}
For $S=\chi^2(p\Vert q)$ and $T=\chi^2(q\Vert p)$,
\begin{equation*}
 J(p,q)\le4\ph\left(\frac{\sqrt{ST}}4\right).
\end{equation*}
\end{proposition}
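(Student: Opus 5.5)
The plan is to pass from the identity $J(p,q)=4\E_\nu\ph(W)$ of Lemma~\ref{lem:contrast} to a bound in terms of $S=4\E_\nu[(1+u)W]$ and $T=4\E_\nu[(1-u)W]$. The concavity of $\ph$ already gives, by Jensen,
\[
 J(p,q)\le 4\ph(\E_\nu W)=4\ph\Bigl(\tfrac{S+T}{8}\Bigr).
\]
The task is to replace the arithmetic mean $\tfrac{S+T}{8}$ by the geometric mean $\tfrac{\sqrt{ST}}{4}=\sqrt{\E_\nu[(1+u)W]\,\E_\nu[(1-u)W]}$. Note that $\tfrac{S+T}{8}$ equals $\alpha\E_\nu[(1+u)W]+\beta\E_\nu[(1-u)W]$ with $\alpha=\beta=\tfrac12$. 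Also
\[
 \min_{\alpha\beta=1/4}\bigl(\alpha\E_\nu[(1+u)W]+\beta\E_\nu[(1-u)W]\bigr)=\tfrac{\sqrt{ST}}{4}.
\]
So it suffices to show, for every $\alpha,\beta>0$ with $\alpha\beta=\tfrac14$,
\[
 \E_\nu\ph(W)\le \ph\bigl(\E_\nu[W(\alpha(1+u)+\beta(1-u))]\bigr),
\]
and then optimize over $\alpha$.

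To get the $\alpha$-version I would use a change of measure instead of a pointwise estimate. Set $\lambda=\alpha-\beta$, and write $\alpha(1+u)+\beta(1-u)=(\alpha+\beta)(1+\mu u)$ with $\mu=\lambda/(\alpha+\beta)\in(-1,1)$. Since $\E_\nu u=0$ and $|u|<1$, the measure $\nu_\mu=(1+\mu u)\nu$ is again a probability measure. Jensen under $\nu_\mu$ gives $\E_{\nu_\mu}\ph(W)\le\ph(\E_{\nu_\mu}W)$. Since $\alpha+\beta\ge1$ and $\ph$ is increasing, this yields
\[
 \E_{\nu_\mu}\ph(W)\le\ph\bigl((\alpha+\beta)\E_{\nu_\mu}W\bigr).
\]
What remains is to compare $\E_\nu\ph(W)$ with $\E_{\nu_\mu}\ph(W)$ (up to the slack gained from $\alpha+\beta\ge1$). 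The correction term is $\mu\E_\nu[u\,\ph(W)]$, which has no sign in general. Here I would exploit the symmetry $p\leftrightarrow q$, under which $u\mapsto-u$ and $J$, $W$, $ST$ are all invariant. Averaging the inequality for $\mu$ and $-\mu$ removes the odd term. The right-hand sides must then be recombined using concavity of $\ph$ and the fact that $\ph(cx)\le c\,\ph(x)$ for $c\ge1$ (which follows from $\ph(0)=0$ and concavity).

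If this recombination does not close, the fallback is the integral representation \eqref{eq:phi-integral}. Writing $\kappa=1-s^2$, one has $\ph(W)=\int_0^1\frac{W}{1+\kappa W}\dd s$ and $\frac{W}{1+\kappa W}=\frac{u^2}{1-s^2u^2}$. For each fixed $s$, I would bound $\E_\nu\frac{u^2}{1-s^2u^2}$ by a Cauchy--Schwarz argument. The factorization $(1+u)(1-u)=1-u^2$ pairs the weights $(1+u)W$ and $(1-u)W$, giving $\E_\nu\bigl[\sqrt{(1+u)W}\sqrt{(1-u)W}\bigr]\le\tfrac{\sqrt{ST}}4$. I would aim for the per-$s$ estimate
\[
 \E_\nu\frac{W}{1+\kappa W}\le\frac{g}{1+\kappa g},\qquad g=\frac{\sqrt{ST}}4,
\]
and then integrate in $s$.

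The main obstacle is exactly this step: the naive pointwise comparison goes the wrong way. Indeed, $\sqrt{(1+u)(1-u)}\,W\le W$, so bounding $\ph(W)$ by a function of $\sqrt{(1+u)(1-u)}\,W$ alone fails. The tilt or symmetrization must therefore use the mean-zero constraint $\E_\nu u=0$ in an essential way. I would first test the desired inequality on two-point measures $\nu$, where $u$ takes two values of opposite sign with $\E_\nu u=0$, to see which of the two routes survives, and then extend by the concavity argument.
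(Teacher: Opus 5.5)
\paragraph{Verdict.} There is a genuine gap. Neither of your routes closes, and for the same reason: the passage from the arithmetic mean $\E_\nu W=(S+T)/8$ to the geometric mean $g=\sqrt{ST}/4$ cannot be made pointwise, either per tilt or per $s$. It has to be made after integrating.

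\paragraph{The per-$s$ fallback is false.} At $s=1$ (so $\kappa=0$) your estimate $\E_\nu\frac{W}{1+\kappa W}\le\frac{g}{1+\kappa g}$ reads $(S+T)/8\le\sqrt{ST}/4$. This is the reverse of AM--GM and fails whenever $S\ne T$. For example, $p=(0.19,0.81)$, $q=(0.01,0.99)$ gives $\E_\nu W\approx0.435$ against $g\approx0.208$. By continuity it also fails on an interval of $s$ near $1$, so the integrated inequality holds only because of slack at smaller $s$.

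\paragraph{The tilting route cannot close either.} Averaging over $\pm\mu$ gives $\E_\nu\ph(W)\le\frac12\bigl[\ph(x_+)+\ph(x_-)\bigr]$ with $x_+=\frac{\alpha S+\beta T}4$ and $x_-=\frac{\beta S+\alpha T}4$. Since $\alpha\beta=\frac14$, AM--GM gives $x_\pm\ge g$, with equality in both only if $S=T$. So this bound is strictly weaker than $\ph(g)$ for every admissible $\alpha$. Concavity only pushes it toward $\ph\bigl((\alpha+\beta)\frac{S+T}8\bigr)$, which is weaker than plain Jensen.

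If you do not symmetrize, take the optimal $\mu=\frac{T-S}{T+S}$. Then the correction $\mu\E_\nu[u\ph(W)]$ typically has the wrong sign, because $\E_\nu[uW]=\frac{S-T}8$ and $\ph$ is increasing. This happens in the example above, and you give no quantitative estimate of the correction against the slack from $\alpha+\beta\ge1$. Your reduction to ``all $\alpha$ with $\alpha\beta=\frac14$'' is equivalent to the claim, not a simplification of it.

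\paragraph{The missing idea.} Keep the interpolation unsymmetrized and use $S$ and $T$ separately until after integration. Put $m_\sigma=(1-\sigma)q+\sigma p$ and $R_\sigma=\sum_z(p(z)-q(z))^2/m_\sigma(z)$. Then $J(p,q)=\int_0^1R_\sigma\dd\sigma$, and your integrand is exactly $\E_\nu\frac{u^2}{1-s^2u^2}=\frac18(R_\sigma+R_{1-\sigma})$ with $\sigma=\frac{1+s}2$.

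The paper bounds each $R_\sigma$ with three ingredients:
\begin{itemize}
\item the variance decomposition of the mixture, $\Var_{m_\sigma}f=(1-\sigma)\Var_qf+\sigma\Var_pf+\sigma(1-\sigma)\delta_f^2$, where $\delta_f=\E_pf-\E_qf$;
\item the Cauchy--Schwarz bounds $\Var_qf\ge\delta_f^2/S$ and $\Var_pf\ge\delta_f^2/T$;
\item the test function $f=(p-q)/m_\sigma$, for which $\Var_{m_\sigma}f=\delta_f=R_\sigma$.
\end{itemize}
Together these give
\[
 R_\sigma\le\Bigl(\frac{1-\sigma}{S}+\frac{\sigma}{T}+\sigma(1-\sigma)\Bigr)^{-1}.
\]
Integrating exactly yields $\gamma\arcosh(z)/\sqrt{z^2-1}$, where $\gamma=\sqrt{ST}$ and $z=\frac\gamma2+\frac{S+T}{2\gamma}$. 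Only at this stage is AM--GM used, in the form $z\ge1+\frac\gamma2$. It is combined with the fact that $z\mapsto\arcosh(z)/\sqrt{z^2-1}$ is decreasing. Finally, $\arcosh(1+2a)=2\artanh\sqrt{a/(1+a)}$ converts the result into $4\ph(\sqrt{ST}/4)$.
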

\begin{proof}
The case $p=q$ is immediate. Otherwise $S,T>0$. For $m_s=(1-s)q+sp$ and $\delta_f=\E_pf-\E_qf$, Cauchy--Schwarz gives
\begin{equation}
      \label{eq:Centered-cauchy-schwarz}
      \Var_q(f)\ge\frac{\delta_f^2}{S},\qquad
 \Var_p(f)\ge\frac{\delta_f^2}{T}.
\end{equation}
The variance of $m_s$ is exactly
\begin{equation}
      \label{eq:Variance-ms}
       \Var_{m_s}(f)=(1-s)\Var_q(f)+s\Var_p(f)+s(1-s)\delta_f^2.
\end{equation}
Put $R_s=\sum_z(p(z)-q(z))^2/m_s(z)$ and choose $f=(p-q)/m_s$. The $m_s$-mean of $f$ is zero, its $m_s$-variance is $R_s$, and $\delta_f=R_s$. Applying \eqref{eq:Variance-ms} to $f$ and \eqref{eq:Centered-cauchy-schwarz} give
\begin{equation*}
 R_s\le\left(\frac{1-s}{S}+\frac{s}{T}+s(1-s)\right)^{-1}.
\end{equation*}

Let $g=\sqrt{ST}$ and $z=g/2+(S+T)/(2g)\geq g/2+1$.
Then we have
\[
 J(p,q)=\int_0^1R_s\dd s\leq \int_0^1\frac{\dd s}{(1-s)/S+s/T+s(1-s)}
 =g\arcosh(z)/\sqrt{z^2-1}.
\]
Since $h(v):=v/\sinh v$ is strictly decreasing on $(0,\infty)$, the function
$z\mapsto h(\arcosh z)=\arcosh(z)/\sqrt{z^2-1}$ is strictly decreasing on $(1,\infty)$. Hence the integral is at most
\[
 g\frac{\arcosh(1+g/2)}{\sqrt{(1+g/2)^2-1}}=4\ph(g/4).
\]
The last identity follows from $\arcosh(1+2a)=2\artanh\sqrt{a/(1+a)}$.
\end{proof}

We introduce the scalar function used in the volume comparison. For $a>0$,
define
\begin{equation}\label{eq:F}
 F(a)=\int_0^\infty\ph\left(\frac{a}{\ee^{2t}-1}\right)\dd t.
\end{equation}
The growth estimates
\begin{equation}\label{eq:phi-growth}
 \ph(v)\le v,\qquad
 \ph(v)\le\frac12\log\bigl(4(1+v)\bigr)
\end{equation}
explain the convergence of the integral defining $F$. The first follows from
\eqref{eq:phi-integral}. For the second, since
$0\le\sqrt{v/(1+v)}<1$,
\[
 \ph(v)
 =\frac12\sqrt{\frac{v}{1+v}}
   \log\frac{1+\sqrt{v/(1+v)}}{1-\sqrt{v/(1+v)}}
 \le\frac12
   \log\frac{1+\sqrt{v/(1+v)}}{1-\sqrt{v/(1+v)}}.
\]
Moreover $1-\sqrt{\frac{v}{1+v}}\geq 1/(2(1+v))$, while $1+\sqrt{v/(1+v)}\le2$. Consequently
\[
 \frac{1+\sqrt{v/(1+v)}}{1-\sqrt{v/(1+v)}}\le4(1+v),
\]
which proves the second estimate in \eqref{eq:phi-growth}. To see the
resulting convergence, note that $a/(\ee^{2t}-1)\sim a/(2t)$ as $t\downarrow0$, so the integrand is
$O(\log(1/t))$ near zero, whereas \eqref{eq:phi-growth} gives
$O(\ee^{-2t})$ at infinity. Thus the integral is finite and positive.

\begin{lemma}\label{lem:F}
The function $F$ defined in \eqref{eq:F} has the following properties.
\begin{enumerate}[label=(\roman*)]
\item It admits the representation
\begin{equation}\label{eq:F-log-representation}
 F(a)=\frac a2\int_0^1
 \frac{\log(a(1-s^2))}{a(1-s^2)-1}\dd s.
\end{equation}
The quotient is understood by continuous extension at $a(1-s^2)=1$;

\item The function $F$ is strictly increasing and strictly concave, and
\begin{equation}\label{eq:F-properties}
 F(1)=\log 2,\qquad
 F(\sqrt{ab})\le\frac{F(a)+F(b)}2;
\end{equation}

\item For fixed $K>0$, the function
\begin{equation}\label{eq:comparison-monotonicity}
 \mathcal C_K(\alpha,\beta)=\frac{2\beta}{K}
 F\left(\frac{K\alpha}{2\beta}\right)
 =2\beta\int_0^\infty\ph\left(\frac{K\alpha}{2\beta(\ee^{2Kt}-1)}\right)\dd t
\end{equation}
is strictly increasing separately in $\alpha,\beta>0$.
\end{enumerate}
\end{lemma}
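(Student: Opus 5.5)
Starting from the definition \eqref{eq:F}, I will substitute the integral representation \eqref{eq:phi-integral} of $\ph$ and use Tonelli, which applies because the integrand is nonnegative. For fixed $s\in(0,1)$, write $c=1-s^2$ and $v=a/(\ee^{2t}-1)$. The inner integral is
\[
 \int_0^\infty\frac{v}{1+cv}\dd t=\int_0^\infty\frac{a}{\ee^{2t}-1+ca}\dd t .
\]
Substituting $w=\ee^{2t}$, so that $\dd t=\dd w/(2w)$, turns this into
\[
 \frac a2\int_1^\infty\frac{\dd w}{w\,(w-1+ca)} .
\]
Partial fractions with $\beta=ca-1$ give $\frac{1}{w(w+\beta)}=\frac1\beta\bigl(\frac1w-\frac1{w+\beta}\bigr)$. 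Integrating from $1$ to $\infty$ yields $\frac1\beta\log(1+\beta)=\frac{\log(ca)}{ca-1}$, which is continuous at $ca=1$ with value $1$. Integrating over $s$ then gives \eqref{eq:F-log-representation}, which proves (i).

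For (ii) I will work with the form $F(a)=\int_0^\infty\ph(a\,k(t))\dd t$, where $k(t)=1/(\ee^{2t}-1)>0$. Lemma~\ref{lem:contrast} says $\ph$ is strictly increasing and strictly concave, so $a\mapsto\ph(ak(t))$ is strictly increasing and strictly concave for every $t$. Integrating preserves both properties; strictness survives because the pointwise inequalities are strict on a set of positive measure, namely all $t>0$. For $F(1)$, put $a=1$ in (i). The integrand becomes $\frac{\log(1-s^2)}{-s^2}$, and integration by parts, or expansion as $\sum_{n\ge1}s^{2n-2}/n$ followed by termwise integration, gives $\tfrac12\int_0^1\frac{-\log(1-s^2)}{s^2}\dd s=\log 2$. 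Indeed, $\int_0^1 -\log(1-s^2)s^{-2}\dd s=\bigl[\log(1-s^2)/s\bigr]$ plus $\int 2/(1-s^2)$, and the boundary terms and the $\artanh$ combine to $2\log2$. I will verify this carefully; an alternative check is the hypercube computation $F(1)=d\log2/d$.

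The inequality $F(\sqrt{ab})\le\frac{F(a)+F(b)}{2}$ is midpoint convexity of $x\mapsto F(\ee^x)$. This is the step I expect to be the main obstacle, since concavity of $F$ alone pushes in the wrong direction. I plan to show that $\psi(x)=\ph(\ee^{x}k)$ is convex in $x$, i.e. that $v\ph'(v)$ is nondecreasing in $v$. From \eqref{eq:phi-integral},
\[
 v\ph'(v)=\int_0^1\frac{v}{(1+cv)^2}\dd s .
\]
This is not obviously monotone, so instead I will use (i) directly. There $F(\ee^x)=\frac12\int_0^1 g(x+\log c)\dd s$ with $g(y)=\frac{\ee^y\,y}{\ee^y-1}$. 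Convexity of $g$ in $y$ is a one-variable calculus check: $g(y)=y/(1-\ee^{-y})$, and its second derivative is nonnegative. Indeed $g$ is a standard convex function, its derivative increasing from $0$ to $1$. Integrating over $s$ preserves convexity in $x$, which gives the log-midpoint inequality.

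For (iii), substitute $t\mapsto Kt$ in \eqref{eq:F}; this gives the second expression in \eqref{eq:comparison-monotonicity}. Monotonicity in $\alpha$ is immediate because $\ph$ is strictly increasing. For $\beta$, write $\mathcal C_K=\frac{K\alpha}{K}\cdot\frac{F(a)}{a}$ with $a=K\alpha/(2\beta)$, so $\mathcal C_K=\alpha\,F(a)/a$. Increasing $\beta$ decreases $a$, and $F(a)/a$ is strictly decreasing because $F$ is strictly concave with $F(0^+)=0$ (note $\ph(0)=0$, and use dominated convergence). Hence $\mathcal C_K$ is strictly increasing in $\beta$.
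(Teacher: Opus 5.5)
Your proposal is correct. Part (i), the strict monotonicity and concavity in (ii), and (iii) match the paper up to cosmetic changes:
\begin{itemize}
\item you substitute $w=\ee^{2t}$ rather than $y=\ee^{-2t}$;
\item you get $F(1)$ from (i) via $\sum_{n\ge1}\frac{1}{n(2n-1)}=2\log 2$, whereas the paper notes that the integrand of \eqref{eq:F} at $a=1$ is $\ee^{-t}\artanh(\ee^{-t})$;
\item in (iii), strict decrease of $F(a)/a$ (from strict concavity and $F(0^+)=0$) is the integrated form of the paper's pointwise inequality $\ph(v)-v\ph'(v)>0$.
\end{itemize}

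The genuine difference is in $F(\sqrt{ab})\le\frac{F(a)+F(b)}{2}$. The paper substitutes $v=a/(\ee^{2t}-1)$ and integrates by parts to get $F(a)=\frac12\int_0^\infty\ph'(v)\log(1+a/v)\dd v$, with boundary terms controlled by \eqref{eq:phi-growth}. It then uses convexity of $s\mapsto\log(1+\ee^s/v)$. You instead read convexity of $x\mapsto F(\ee^x)$ directly off (i), using convexity of $g(y)=y/(1-\ee^{-y})$. This avoids the integration by parts and the boundary analysis. The price is a one-variable convexity fact that you should justify rather than call standard. For example, $g(y)=y+\frac{y}{\ee^y-1}$ and $\frac{y}{\ee^y-1}=\frac y2\coth\frac y2-\frac y2$, and $(x\coth x)''=2(x\coth x-1)/\sinh^2 x\ge0$.

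Correct one slip: since $a\log(ac)/(ac-1)=c^{-1}g(\log a+\log c)$, the right identity is $F(\ee^x)=\frac12\int_0^1(1-s^2)^{-1}g\bigl(x+\log(1-s^2)\bigr)\dd s$. The missing weight is positive and independent of $x$, so your convexity argument is unaffected.

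You were also right to drop the pointwise idea, and in fact it fails. One computes $v\ph'(v)=\frac12+\frac{\ph(v)-1}{2(1+v)}$. This exceeds $\frac12$ once $\ph(v)>1$ and then tends back to $\frac12$. Hence $x\mapsto\ph(\ee^x)$ is not convex, and a rearrangement such as yours or the paper's is genuinely needed.
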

\begin{proof}
For (i), substituting \eqref{eq:phi-integral} into the definition of $F$ and
applying Tonelli's theorem to the nonnegative integrand give
\[
 \begin{aligned}
 F(a)
 &=\int_0^1\int_0^\infty
   \frac{a}{\ee^{2t}-1+a(1-s^2)}\dd t\dd s.
 \end{aligned}
\]
For fixed $s<1$, put $b=a(1-s^2)>0$. The change of variables
$y=\ee^{-2t}$ gives
\[
 \int_0^\infty\frac{a}{\ee^{2t}-1+b}\dd t
 =\frac a2\int_0^1\frac{\dd y}{1+(b-1)y}
 =\frac a2\frac{\log b}{b-1}.
\]
At $b=1$, the last quotient is understood by continuity as $1$. Therefore
\[
 F(a)=\frac a2\int_0^1
 \frac{\log(a(1-s^2))}{a(1-s^2)-1}\dd s.
\]
This proves \eqref{eq:F-log-representation}. 

For (ii), for every $t>0$, the map
$a\mapsto\ph(a/(\ee^{2t}-1))$ is strictly increasing and strictly concave.
Integrating its strict pointwise inequalities shows that $F$ has the same two
properties. At $a=1$, the integrand of $F$ is
$\ee^{-t}\artanh(\ee^{-t})$, so
\[
 F(1)=\int_0^1\artanh r\dd r=\log2.
\]

Substitution $v=a/(\ee^{2t}-1)$ and integration by parts yield
\begin{equation*}
 F(a)=\frac a2\int_0^\infty\frac{\ph(v)}{v(v+a)}\dd v
 =\frac12\int_0^\infty\ph'(v)\log(1+a/v)\dd v.
\end{equation*}
The boundary terms vanish by \eqref{eq:phi-growth}. Since $s\mapsto\log(1+\ee^s/v)$ is convex, $s\mapsto F(\ee^s)$ is convex. Then Jensen inequality gives second assertion of \eqref{eq:F-properties}.

For (iii), the integrand in \eqref{eq:comparison-monotonicity} increases strictly with $\alpha$. For fixed $\alpha$, differentiating $\beta\ph(c/\beta)$ gives $\ph(v)-v\ph'(v)>0$ at $v=c/\beta>0$: this expression is zero at $v=0$ and has derivative $-v\ph''(v)>0$. Pointwise comparison of finite integrals proves the remaining monotonicity.
\end{proof}

\section{ Weighted total-volume comparison}\label{sec:weighted-bound}

Let $d_\pi=\sum_x\pi(x)d_x$, $D_\pi=\sum_x\pi(x)\Deg(x)$ and set
\begin{equation*}
 a=\frac{Kd_\pi}{2D_\pi},\qquad
 b_t=\frac{a}{\ee^{2Kt}-1},\qquad
 G_*(t)=2D_\pi\ph(b_t).
\end{equation*}

\begin{proposition}\label{prop:joint}
There is a probability measure $\nu_t$ and a nonnegative random variable $W_t$ on $E\times V$ such that $\E_{\nu_t}W_t\le b_t$.Then
\begin{equation}\label{eq:joint-difference}
 \begin{aligned}
 &G_*(t)-\frac{\dd}{\dd t}\bigl(\E_\pi H_t\bigr)\\
 =&2D_\pi\bigl[\ph(b_t)-\ph(\E_{\nu_t}W_t)\bigr]+2D_\pi\bigl[\ph(\E_{\nu_t}W_t)-\E_{\nu_t}\ph(W_t)\bigr]\geq 0,
 \end{aligned}
\end{equation}
with both brackets nonnegative.
\end{proposition}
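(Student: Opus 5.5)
We construct $\nu_t$ and $W_t$ from the contrast representation of Lemma~\ref{lem:contrast}, applied edge by edge to the pair $(p_t^x,p_t^y)$. For each edge $e=\{x,y\}\in E$ and $z\in V$ we put
\[
 \nu_t(e,z)=\frac{b(x,y)}{D_\pi\vol(G)}\cdot\frac{p_t^x(z)+p_t^y(z)}{2},\qquad
 W_t(e,z)=\frac{u^2}{1-u^2},\quad u=\frac{p_t^x(z)-p_t^y(z)}{p_t^x(z)+p_t^y(z)}.
\]
Here $W_t$ is symmetric in $x,y$, hence well defined on unoriented edges, and nonnegative. Since $\sum_{\{x,y\}\in E}b(x,y)=\tfrac12\sum_x m(x)\Deg(x)=\tfrac12\vol(G)D_\pi$, the total mass of $\nu_t$ is $\tfrac12$. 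So we should double the normalization: we take $\nu_t(e,z)=\frac{2b(x,y)}{D_\pi\vol(G)}\cdot\frac{p_t^x(z)+p_t^y(z)}{2}$, which is a probability measure on $E\times V$.

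With this choice, Proposition~\ref{prop:entropy-production} and the identity $J=4\E_\nu\ph(W)$ give
\[
 \frac{\dd}{\dd t}\E_\pi H_t=\frac{1}{\vol(G)}\sum_e b(x,y)\,4\E_{\nu^e}\ph(W^e)=2D_\pi\,\E_{\nu_t}\ph(W_t).
\]
The displayed decomposition \eqref{eq:joint-difference} is then the algebraic identity obtained by adding and subtracting $2D_\pi\ph(\E_{\nu_t}W_t)$. The second bracket is nonnegative by Jensen's inequality, using the concavity of $\ph$ from Lemma~\ref{lem:contrast}. The first bracket is nonnegative once $\E_{\nu_t}W_t\le b_t$ is known, because $\ph$ is increasing.

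It remains to bound $\E_{\nu_t}W_t$, which is the main step. By Lemma~\ref{lem:contrast}, $8\E_{\nu^e}W^e=\chi^2(p_t^x\Vert p_t^y)+\chi^2(p_t^y\Vert p_t^x)$. Proposition~\ref{prop:chi-budget}, applied to both orientations, bounds this by $\frac{2K}{\ee^{2Kt}-1}\bigl(\frac1{q(x,y)}+\frac1{q(y,x)}\bigr)$. Hence
\[
 \E_{\nu_t}W_t\le\frac{2}{D_\pi\vol(G)}\sum_e b(x,y)\cdot\frac{K}{4(\ee^{2Kt}-1)}\Bigl(\frac{m(x)}{b(x,y)}+\frac{m(y)}{b(x,y)}\Bigr)
 =\frac{K}{2D_\pi\vol(G)(\ee^{2Kt}-1)}\sum_e\bigl(m(x)+m(y)\bigr).
\]
Finally, $\sum_e(m(x)+m(y))=\sum_x m(x)d_x=\vol(G)d_\pi$. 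This gives exactly $\E_{\nu_t}W_t\le\frac{Kd_\pi}{2D_\pi(\ee^{2Kt}-1)}=b_t$.

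The delicate points are bookkeeping: choosing the normalization so that $\nu_t$ is a probability measure and the constant $2D_\pi$ matches, and using the reverse-Poincaré bound on both orientations so that the edge weights cancel and leave the combinatorial degree. That cancellation is why the final bound involves $d_\pi$.
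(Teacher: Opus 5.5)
Your proposal is correct and is essentially the paper's proof. It uses the same edgewise application of Lemma~\ref{lem:contrast} with the (corrected) normalization $\nu_t(e,z)=\frac{2b(x,y)}{\vol(G)D_\pi}\cdot\frac{p_t^x(z)+p_t^y(z)}{2}$, the identity $\frac{\dd}{\dd t}\E_\pi H_t=2D_\pi\E_{\nu_t}\ph(W_t)$, and monotonicity plus Jensen for the two brackets. The bound $\E_{\nu_t}W_t\le b_t$ also comes out exactly as in the paper: Proposition~\ref{prop:chi-budget} is applied in both orientations, and $b(x,y)/q(x,y)=m(x)$ turns the edge sum into $\vol(G)d_\pi$.
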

\begin{proof}
For each edge $e=\{x,y\}$, apply \eqref{eq:contrast} to $p_t^x$ and $p_t^y$, and denote the resulting probability measure and random variable by $\nu_{e,t}$ and $W_{e,t}$, respectively. Define
\[
 \nu_t(e,z)=\frac{2b(x,y)}{\vol(G)D_\pi}\nu_{e,t}(z),\qquad
 W_t(e,z)=W_{e,t}(z).
\]
This is a probability law and gives positive mass to every $(e,z)$. Applying \eqref{eq:chi-budget} to $(x,y)$ and $(y,x)$, and then using \eqref{eq:contrast-identities}, gives
\[
 \E_{\nu_{e,t}}W_{e,t}
 \le\frac{K}{4(\ee^{2Kt}-1)}
 \left(\frac1{q(x,y)}+\frac1{q(y,x)}\right).
\]
Consequently
\begin{equation}\label{eq:joint-average-bound}
 \E_{\nu_t}W_t
 \le\frac{K}{2D_\pi(\ee^{2Kt}-1)}
 \frac{1}{\vol(G)}\sum_{\{x,y\}\in E}b(x,y)
 \left(\frac1{q(x,y)}+\frac1{q(y,x)}\right)
 =\frac{Kd_\pi}{2D_\pi(\ee^{2Kt}-1)}.
\end{equation}
The last equality uses $\frac{b(x,y)}{\vol(G)q(x,y)}=\pi(x)$ and counts incidences. Equations
\eqref{eq:contrast-identities} and \eqref{eq:entropy-production} give
\[
 \frac{\dd}{\dd t}\bigl(\E_\pi H_t\bigr)
 =2D_\pi\E_{\nu_t}\ph(W_t).
\]
By this identity and $G_*(t)=2D_\pi\ph(b_t)$, the left-hand side of
\eqref{eq:joint-difference} equals
$2D_\pi[\ph(b_t)-\E_{\nu_t}\ph(W_t)]$.  Its first bracket is
nonnegative by the monotonicity of $\ph$ and the bound
$\E_{\nu_t}W_t\le b_t$, while its second
bracket is nonnegative by Jensen's inequality and the concavity of $\ph$.
\end{proof}

Now we are ready to give the proof of our main theorem.
\begin{theorem}\label{thm:maintheorem}
Suppose that \(G\) has at least two vertices and satisfies \eqref{eq:standing}. Then
\begin{equation}\label{eq:weighted-comparison}
 \log\frac{M}{m_{\max}}
 \le H(\pi)
 \le \frac{2D_\pi}{K}F\left(\frac{Kd_\pi}{2D_\pi}\right)
 \le \frac{2D}{K}F\left(\frac{Kd}{2D}\right).
\end{equation}
\end{theorem}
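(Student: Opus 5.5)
The three inequalities in \eqref{eq:weighted-comparison} will be proved separately, from left to right. Here $M$ stands for $\vol(G)$.

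\emph{First inequality.} Since $\pi(x)=m(x)/M\le m_{\max}/M$ for every $x$, we have $-\log\pi(x)\ge\log(M/m_{\max})$ pointwise. Averaging against $\pi$ gives $H(\pi)\ge\log(M/m_{\max})$.

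\emph{Second inequality.} The plan is to integrate Proposition~\ref{prop:joint} in time. By Proposition~\ref{prop:entropy-production}, $\E_\pi H_t$ starts at $\E_\pi H_0=0$ (Proposition~\ref{prop:entropy-endpoints}(i)) and tends to $H(\pi)$ as $t\to\infty$. Hence for $0<\varepsilon<T$,
\[
 \E_\pi H_T-\E_\pi H_\varepsilon=\int_\varepsilon^T\frac{\dd}{\dd t}\bigl(\E_\pi H_t\bigr)\dd t\le\int_\varepsilon^T G_*(t)\dd t .
\]
The left side is justified because $t\mapsto\E_\pi H_t$ is smooth on $(0,\infty)$ by strict positivity of the heat kernel. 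Continuity of $H_t$ at $t=0$ follows from continuity of $P_t$ and of $-s\log s$. Letting $\varepsilon\to0$ and $T\to\infty$ therefore gives
\[
 H(\pi)\le\int_0^\infty 2D_\pi\,\ph\Bigl(\frac{a}{\ee^{2Kt}-1}\Bigr)\dd t,\qquad a=\frac{Kd_\pi}{2D_\pi}.
\]
The integral is finite by the convergence discussion after \eqref{eq:F}. Substituting $s=Kt$ turns the right side into $\frac{2D_\pi}{K}F(a)$, which is $\mathcal C_K(d_\pi,D_\pi)$ in the notation of \eqref{eq:comparison-monotonicity}.

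\emph{Third inequality.} We have $d_\pi\le d$ and $D_\pi\le D$, since these are $\pi$-averages of $d_x$ and $\Deg(x)$. Lemma~\ref{lem:F}(iii) says $\mathcal C_K$ is increasing separately in each argument, so
\[
 \mathcal C_K(d_\pi,D_\pi)\le\mathcal C_K(d,D_\pi)\le\mathcal C_K(d,D)=\frac{2D}{K}F\Bigl(\frac{Kd}{2D}\Bigr).
\]

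\emph{Main obstacle.} The only delicate step is the passage to the limit in the integrated inequality at both endpoints. Near $t=0$ the bound $G_*$ blows up like $\log(1/t)$, but it is integrable, so integrating on $[\varepsilon,T]$ and then letting $\varepsilon\to0$ is legitimate; one must also use continuity of $\E_\pi H_t$ at $0$. At $t\to\infty$ we use the limit $\E_\pi H_t\to H(\pi)$ from Proposition~\ref{prop:entropy-endpoints}(ii). The rest is bookkeeping with the already established lemmas.
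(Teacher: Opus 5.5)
Your proposal is correct and matches the paper's proof step for step. You get the first inequality from $\pi(x)\le m_{\max}/\vol(G)$. You get the second by integrating the nonnegative difference $G_*(t)-\frac{\dd}{\dd t}\bigl(\E_\pi H_t\bigr)$ from Proposition~\ref{prop:joint} over $[\eps,T]$, passing to the limits via Proposition~\ref{prop:entropy-endpoints}, and evaluating with the substitution $s=Kt$. You get the third from $d_\pi\le d$, $D_\pi\le D$ and the separate monotonicity of $\mathcal C_K$. The only difference is that the paper records the exact identity \eqref{eq:integrated-difference} rather than just the inequality, because it reuses that identity in the rigidity analysis.
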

\begin{proof}
By \eqref{eq:phi-growth}, $G_*$ is integrable at zero and infinity. Integrate the inequality given by \eqref{eq:joint-difference} over $[\eps,T]$, then use Proposition~\ref{prop:entropy-endpoints} to pass to $\eps\downarrow0$ and $T\to\infty$. Lemma~\ref{lem:F} evaluates the integral $\int_0^\infty G_*(t)\mathrm{d}t$ and gives the exact difference identity
\begin{equation}\label{eq:integrated-difference}
 \frac{2D_\pi}{K}F\left(\frac{Kd_\pi}{2D_\pi}\right)-H(\pi)
 =\int_0^\infty\left(G_*(t)-\frac{\dd}{\dd t}\bigl(\E_\pi H_t\bigr)\right)\dd t.
\end{equation}
The first inequality in \eqref{eq:weighted-comparison} follows from
$H(\pi)=\log \vol(G)-\E_\pi\log m\ge\log \vol(G)-\log m_{\max}$.
The last one follows from $d_\pi\le d$, $D_\pi\le D$, and \eqref{eq:comparison-monotonicity}.
\end{proof}

The dependence on the combinatorial degree $d$ in
\eqref{eq:weighted-comparison} is essential: in general, it cannot be replaced by a bound involving only $K$ and $D$. The following family shows
this directly.
\begin{example}
For $n\ge2$, give $K_n$ constant mass $1$ and rate $q_0=4/(n-1)$
on every oriented edge. Then $\Deg\equiv4$. To see that
$(K_n,q_0,1)$ also satisfies $\CD(2,\infty)$, fix $x$,
write $v_y=f(y)-f(x)$, $S_2=\sum_{y\ne x}v_y^2$, and
$s=\sum_{y\ne x}v_y$. A direct calculation gives
\[
 \Gamma_2(f)(x)
 =\frac{q_0(n+2)}2\Gamma(f)(x)
   +\frac{q_0^2}{2}\bigl((n-1)S_2-s^2\bigr).
\]
The last term is nonnegative by Cauchy--Schwarz, so the curvature is at least
$q_0(n+2)/2=2(n+2)/(n-1)>2$. Here $D=4$ remains fixed, whereas
$d=n-1$ and $\vol(G)/m_{\max}=n$ both grow. Thus no upper bound for the normalized
mass can depend only on $K$ and $D$, and $d$ cannot be omitted from
\eqref{eq:weighted-comparison}. For instance, the bound
$\vol(G)/m_{\max}\le2^{2D/K}$ already fails for $n=17$, $K=2$, and $D=4$.
\end{example}

\begin{proposition}\label{prop:edge-entropy}
Under \eqref{eq:standing},
\begin{equation}\label{eq:edge-entropy}
 H(\pi)\le \frac4K\frac{1}{\vol(G)}\sum_{\{x,y\}\in E}b(x,y)
 F\left(\frac{K}{2\sqrt{q(x,y)q(y,x)}}\right).
\end{equation}
\end{proposition}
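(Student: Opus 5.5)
Integrating the entropy production identity \eqref{eq:entropy-production} from $0$ to $\infty$ and using Proposition~\ref{prop:entropy-production} gives
\[
 H(\pi)=\int_0^\infty\frac{1}{\vol(G)}\sum_{\{x,y\}\in E}b(x,y)J(p_t^x,p_t^y)\dd t .
\]
The plan is to bound each edge term separately, instead of pooling all edges into one measure $\nu_t$ as in Proposition~\ref{prop:joint}. Fix an edge $\{x,y\}$ and $t>0$. Set $S=\chi^2(p_t^y\Vert p_t^x)$ and $T=\chi^2(p_t^x\Vert p_t^y)$. By Proposition~\ref{prop:chi-budget}, applied to both orientations $(x,y)$ and $(y,x)$,
\[
 \sqrt{ST}\le\frac{2K}{(\ee^{2Kt}-1)\sqrt{q(x,y)q(y,x)}}.
\]
Proposition~\ref{prop:geometric-chi} and the monotonicity of $\ph$ from Lemma~\ref{lem:contrast} then give
\[
 J(p_t^x,p_t^y)\le4\ph\!\left(\frac{K}{2\sqrt{q(x,y)q(y,x)}\,(\ee^{2Kt}-1)}\right).
\]
This is exactly where the geometric mean of the two rates enters, rather than the arithmetic mean of their reciprocals used in \eqref{eq:joint-average-bound}.

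Next I integrate in time. Put $a_e=K/(2\sqrt{q(x,y)q(y,x)})$ and substitute $s=Kt$ in \eqref{eq:F}:
\[
 \int_0^\infty\ph\!\left(\frac{a_e}{\ee^{2Kt}-1}\right)\dd t=\frac1K F(a_e).
\]
Summing over edges with weights $b(x,y)/\vol(G)$, the bound for $H(\pi)$ becomes $\frac{4}{K}\frac{1}{\vol(G)}\sum_e b(x,y)F(a_e)$, which is \eqref{eq:edge-entropy}. Exchanging the time integral with the finite edge sum is harmless: all terms are nonnegative and each is integrable by \eqref{eq:phi-growth}.

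The main care point is the endpoint justification. I will integrate over $[\eps,T]$ and use $\E_\pi H_\eps\to0$ and $\E_\pi H_T\to H(\pi)$ from Proposition~\ref{prop:entropy-endpoints}, combined with monotone convergence, exactly as in the proof of Theorem~\ref{thm:maintheorem}. Beyond that, the argument is a direct per-edge combination of Propositions~\ref{prop:chi-budget} and \ref{prop:geometric-chi}. No Jensen step is needed, so concavity of $\ph$ plays no role here.

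As a sanity check, the result can be compared with Theorem~\ref{thm:maintheorem} via the midpoint convexity $F(\sqrt{ab})\le(F(a)+F(b))/2$ in \eqref{eq:F-properties}. This comparison is not required for the proof itself.
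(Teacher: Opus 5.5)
Your proposal is correct and follows essentially the same route as the paper: you bound each edge term using Propositions~\ref{prop:chi-budget} and~\ref{prop:geometric-chi} together with the monotonicity of $\ph$, then sum with weights $b(x,y)/\vol(G)$, integrate \eqref{eq:entropy-production} in time, and rescale by $u=Kt$ in \eqref{eq:F}. Your explicit treatment of the endpoints over $[\eps,T]$ with monotone convergence only spells out what the paper leaves implicit.
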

\begin{proof}
Proposition~\ref{prop:chi-budget} and Proposition~\ref{prop:geometric-chi} give
\[
 J(p_t^x,p_t^y)\le4\ph\left(
 \frac{K}{2\sqrt{q(x,y)q(y,x)}(\ee^{2Kt}-1)}\right).
\]
Multiply by $b(x,y)$, sum over the edges, divide by $\vol(G)$, and integrate using \eqref{eq:entropy-production}. The change of variables $u=Kt$ in the definition of $F$ gives \eqref{eq:edge-entropy}. 
\end{proof}

\section{The weighted volume rigidity}\label{sec:equality}

We identify equality in the weighted entropy estimate of
Theorem~\ref{thm:weighted-entropy} before applying it to an
unweighted graph. This also gives the weighted rigidity theorem. In the equality case, we have
\begin{equation*}
 H(\pi)=\frac{2D_\pi}{K}F(a),\qquad
 a=\frac{Kd_\pi}{2D_\pi}.
\end{equation*}

\begin{lemma}\label{lem:equality-ratios}
Every oriented edge has the same rate $q_0=K/(2a)$, and $m$ is constant. With
\begin{equation}\label{eq:tau-general}
 r(t)=\sqrt{\frac{a}{\ee^{2Kt}-1+a}},\qquad
 \tau(t)=\frac{1-r(t)}{1+r(t)},
\end{equation}
for every $x\sim y$, $z\in V$, and $t>0$, the ratio
$P_t(x,z)/P_t(y,z)$ is either $\tau(t)$ or $\tau(t)^{-1}$.
\end{lemma}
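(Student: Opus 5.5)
The plan is to exploit the exact difference identity \eqref{eq:integrated-difference}. In the equality case its left-hand side vanishes. By Proposition~\ref{prop:joint} the integrand $G_*(t)-\frac{\dd}{\dd t}(\E_\pi H_t)$ is nonnegative for every $t>0$. It is also continuous in $t$ on $(0,\infty)$, because $p_t^x$ is smooth in $t$ and strictly positive, and $\ph$ is continuous. Hence the integrand vanishes identically for $t>0$, so both brackets in \eqref{eq:joint-difference} vanish at every $t>0$.

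\emph{Step 1: $W_t$ is constant.} The vanishing of the second bracket is equality in Jensen's inequality for the strictly concave function $\ph$ (Lemma~\ref{lem:contrast}). Since $\nu_t$ charges every pair $(e,z)$, this forces $W_t$ to be constant on $E\times V$. Its value is $\E_{\nu_t}W_t$, and this equals $b_t$ because $\ph$ is strictly increasing and the first bracket vanishes. So $W_{e,t}(z)=b_t$ for every edge $e=\{x,y\}$ and every $z$.

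\emph{Step 2: the ratios.} Recall $u=(p_t^x-p_t^y)/(p_t^x+p_t^y)$. The relation $W=u^2/(1-u^2)=b_t$ gives
\[
 |u|=\sqrt{\frac{b_t}{1+b_t}}=\sqrt{\frac{a}{\ee^{2Kt}-1+a}}=r(t).
\]
Since $P_t(x,z)/P_t(y,z)=(1+u)/(1-u)$, this ratio is $(1+r)/(1-r)=\tau^{-1}$ or $(1-r)/(1+r)=\tau$, according to the sign of $u(z)$.

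\emph{Step 3: the rates.} By \eqref{eq:contrast-identities} and $\E_{\nu_{e,t}}u=0$, constancy of $W$ gives
\[
 \chi^2(p_t^x\Vert p_t^y)=4\E_\nu[(1+u)b_t]=4b_t=\chi^2(p_t^y\Vert p_t^x).
\]
The bound \eqref{eq:joint-average-bound} was obtained by averaging the two chi-square budgets \eqref{eq:chi-budget} for $(x,y)$ and $(y,x)$ over all edges, with positive weights. The resulting average bound is exactly $b_t$, and it is attained. Therefore each of the two inequalities \eqref{eq:chi-budget} is an equality on every edge. Writing out that equality,
\[
 4b_t=\frac{2K}{q(x,y)(\ee^{2Kt}-1)}
\]
gives $q(x,y)=K/(2a)=q_0$ for every oriented edge.

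\emph{Step 4: the masses.} Reversibility $m(x)q(x,y)=m(y)q(y,x)$ with equal rates gives $m(x)=m(y)$ for all $x\sim y$, and connectedness makes $m$ constant.

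The main care point is the bookkeeping in Step 3: equality of the averaged bound must be transferred back to equality for each oriented edge separately. This is why I use the identity $\chi^2=4\E[(1\pm u)W]$ together with $\E_\nu u=0$ to see that both orientations contribute exactly $4b_t$.
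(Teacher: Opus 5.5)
Your proposal is correct and follows the paper's proof essentially step for step. You use the vanishing of the continuous nonnegative integrand in \eqref{eq:integrated-difference}, strict monotonicity plus strict Jensen to get $W_{e,t}\equiv b_t$, and \eqref{eq:contrast-identities} with $\E_{\nu_{e,t}}u=0$ to obtain equality in every budget \eqref{eq:chi-budget}, hence $q\equiv K/(2a)$ and constant $m$. The paper adds one remark that the statement does not require: since each ratio is continuous in $t$ and $0<\tau(t)<1<\tau(t)^{-1}$, the choice between $\tau$ and $\tau^{-1}$ for fixed $(x,y,z)$ does not depend on $t$, and Lemma~\ref{lem:equality-scale} uses this.
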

\begin{proof}
The continuous nonnegative integrand in \eqref{eq:integrated-difference} has
integral zero. It vanishes at every positive time, so both brackets in
\eqref{eq:joint-difference} vanish. Strict monotonicity of $\ph$ gives
$\E_{\nu_t}W_t=b_t$ from the first bracket. Equality in Jensen's inequality
for the strictly concave function $\ph$ then gives $W_t=b_t$
$\nu_t$-almost surely. Thus $W_{e,t}(z)=b_t$ for every $(e,z)$, since all
these pairs have positive $\nu_t$-mass.

For each edge, $\E_{\nu_{e,t}}u=0$ and \eqref{eq:contrast-identities} now give both $ \chi^2(p_t^y\Vert p_t^x)$ and  $\chi^2(p_t^x\Vert p_t^y)$ equal to $4b_t$. Equality in \eqref{eq:joint-average-bound} also forces equality in each individual estimate, since the difference in each estimate is nonnegative and has a positive coefficient. Therefore
\[
 \frac{4a}{\ee^{2Kt}-1}
 =\frac{2K}{q(x,y)(\ee^{2Kt}-1)},
\]
which gives $q(x,y)=q_0$ in every orientation. Symmetry of $b$ gives $m(x)q_0=m(y)q_0$ along edges; connectedness makes $m$ constant.

The identity $W=b_t$ gives $|u|=r(t)$ and hence the ratio is either $\tau(t)$ or $\tau(t)^{-1}$. Since each ratio is continuous in $t$ and $0<\tau(t)<1<\tau(t)^{-1}$, for each fixed $(x,y,z)$ it is either always $\tau(t)$ or always $\tau(t)^{-1}$.
\end{proof}

\begin{lemma}\label{lem:equality-scale}
The graph taking equality in Theorem~\ref{thm:maintheorem} is bipartite and $n$-regular for an integer $n\ge1$. Moreover,
\begin{equation}\label{eq:equality-scale}
 q_0=K/2,\qquad a=1,\qquad \tau(t)=\tanh(Kt/2).
\end{equation}
\end{lemma}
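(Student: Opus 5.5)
The plan is to extract everything from the exact ratio identity of Lemma~\ref{lem:equality-ratios}, using small-time asymptotics of the heat kernel. By that lemma, $m\equiv m_0$ is constant, so $P_t$ is symmetric. Each ratio $P_t(x,z)/P_t(y,z)$ for $x\sim y$ is identically $\tau(t)$ or identically $\tau(t)^{-1}$. The one expansion everything hinges on is that of $r(t)$ as $t\downarrow0$:
\[
 r(t)=\Bigl(1+\tfrac{2Kt+2K^2t^2+O(t^3)}{a}\Bigr)^{-1/2}\to1,
\]
so $\tau(t)=\frac{1-r}{1+r}\to0$, and more precisely $\tau(t)=\frac{K}{2a}t+c(a)t^2+O(t^3)$. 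Here $c(a)$ is an explicit coefficient depending on $a$ and $K$. The leading coefficient is $K/(2a)=q_0$.

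\emph{Bipartiteness.} For $t\downarrow0$ we have $P_t(x,z)=\frac{N(x,z)}{k!}\,q_0^{k}t^{k}+O(t^{k+1})$, where $k=\dist(x,z)$ and $N(x,z)\ge1$ counts geodesics. If $x\sim y$ and $\dist(x,z)=\dist(y,z)$, the ratio $P_t(x,z)/P_t(y,z)$ tends to a finite positive constant. This is impossible, because $\tau\to0$ and $\tau^{-1}\to\infty$. Hence adjacent vertices have different distances to every $z$. Taking $z$ fixed, no edge joins two vertices in the same distance sphere, so every edge joins consecutive spheres and $G$ is bipartite.

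\emph{Regularity.} Take $z=x$ and $y\sim x$. Since $P_t(y,x)\to0$ while $P_t(x,x)\to1$, the ratio must be $\tau$, so $P_t(x,y)=\tau(t)P_t(x,x)$ for every neighbour $y$ of $x$. Substituting into the heat equation at the diagonal gives
\[
 \partial_tP_t(x,x)=q_0\sum_{y\sim x}\bigl(P_t(y,x)-P_t(x,x)\bigr)=-q_0d_x(1-\tau(t))P_t(x,x).
\]
Therefore
\[
 P_t(x,x)=\exp\Bigl(-q_0d_x\int_0^t(1-\tau)\Bigr).
\]
Since $1-\tau=2r/(1+r)=O(\ee^{-Kt})$ is integrable, letting $t\to\infty$ and using \eqref{eq:equilibrium-limit} with $\pi\equiv1/|V|$ gives
\[
 \frac{1}{|V|}=\exp\Bigl(-q_0d_x\int_0^\infty(1-\tau)\Bigr).
\]
The left side does not depend on $x$, so $d_x=n$ is constant. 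Since $|V|\ge2$ and $G$ is connected, $n\ge1$.

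\emph{Scale.} It remains to prove $a=1$, because $q_0=K/(2a)$ and $\tau=\tanh(Kt/2)$ then follow by direct simplification of \eqref{eq:tau-general}. I would compare Taylor coefficients of $P_t(x,x)$ computed in two ways. The first is the combinatorial expansion $\sum_k(-q_0n)^k$-type terms plus closed-walk counts: the $t^3$ coefficient involves closed walks of length $3$, which vanish by bipartiteness, together with $n$ and $q_0$ only. The second is the exponential formula above, whose $t^3$ coefficient involves $c(a)$ through $\int_0^t\tau=\frac{q_0}{2}t^2+\frac{c(a)}{3}t^3+\dots$.

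I expect the orders $t$ and $t^2$ to match automatically for every $a$. The main obstacle is therefore this third-order computation: one must carefully expand $r(t)$ to second order to get $c(a)$, and verify that the resulting identity forces $a=1$.

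If this turns out degenerate, the fallback is to compare the two expressions for $\log|V|$. The first is $\log|V|=q_0n\int_0^\infty(1-\tau)\dd t$ from above. The second is $\log|V|=H(\pi)=\frac{2D}{K}F(a)=\frac naF(a)$, using $D=q_0n$. The aim would be to check that the resulting scalar equation in $a$ has the unique solution $a=1$, via $F(1)=\log2$ and the strict monotonicity/concavity in Lemma~\ref{lem:F}.
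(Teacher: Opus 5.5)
Your proposal is correct, and the third-order computation you defer does close, so the fallback is unnecessary. Expanding \eqref{eq:tau-general} gives $\tau(t)=q_0t+q_0(K-2q_0)t^2+O(t^3)$, so $c(a)=q_0(K-2q_0)$. Compare $\exp\bigl(-q_0n\int_0^t(1-\tau)\bigr)$ with $\bigl(\ee^{tq_0(A-nI)}\bigr)_{xx}$, using $(A^2)_{xx}=n$ and $(A^3)_{xx}=0$. The coefficients of $t$ and $t^2$ agree identically, as you predicted. The coefficients of $t^3$ differ exactly by $\frac{nq_0}{3}c(a)$. This forces $K=2q_0$, that is, $a=1$.

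Your bipartiteness argument is the paper's. The other two steps differ from the paper's route.

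\textbf{Regularity.} The paper argues locally along each edge. From $P_t(y,x)=\tau P_t(x,x)$, $P_t(x,y)=\tau P_t(y,y)$ and symmetry it gets $P_t(x,x)=P_t(y,y)$. It then reads off $d_x=d_y$ from the coefficient of $t$. You instead integrate the diagonal ODE, which the paper only uses later in Lemma~\ref{lem:equality-shells}, and pass to the equilibrium limit $P_t(x,x)\to1/|V|$. This global argument bypasses the symmetry step, at the cost of checking that $1-\tau$ is integrable.

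\textbf{Scale.} The paper compares the edge ratio $P_t(y,x)/P_t(x,x)=q_0t+O(t^3)$, obtained from $(A^2)_{xy}=0$, with $\tau$ at order $t^2$. You compare diagonal values at order $t^3$. The two are equivalent because $\partial_t\log P_t(x,x)=-nq_0(1-\tau)$. The paper's version is shorter, since it needs only second-order expansions.
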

\begin{proof}
By Lemma~\ref{lem:equality-ratios}, $L=q_0\Delta_{\mathrm c}$ and the heat kernel is symmetric. If $\dist(v,z)=j$, matrix expansion gives
\begin{equation}\label{eq:small-time}
 P_t(v,z)=\frac{q_0^j g(v,z)}{j!}t^j+O(t^{j+1}),
\end{equation}
where $g(v,z)>0$ is the number of geodesics. 

Fix a root $o$. An edge between two vertices at the same distance from $o$ would make $P_t(x,o)/P_t(y,o)$ tend to a finite positive limit, contradicting the ratio alternative in Lemma~\ref{lem:equality-ratios} because $\tau(t)\to0$. Thus every edge joins consecutive distance layers, and the graph is bipartite.

For an edge $x\sim y$, since both $P_t(x,y)$ and $P_t(y,x)$ are $O(t)$,  $P_t(y,x)=\tau(t)P_t(x,x)$ and $P_t(x,y)=\tau(t)P_t(y,y)$ for all positive times. Symmetry implies $P_t(x,x)=P_t(y,y)$. Comparing the coefficients of $t$ in $P_t(x,x)=1-q_0d_xt+O(t^2)$ makes degrees equal along every edge. Connectedness gives regularity, say $d_x=n$.

To determine the rate, expansion of \eqref{eq:tau-general} gives
\[
 \tau(t)=q_0t+q_0(K-2q_0)t^2+O(t^3).
\]
 On the other hand, bipartiteness implies $(A^2)_{xy}=0$ on an edge, where $A$ is adjacency. Since $L=q_0(A-nI)$,
\[
 P_t(y,x)=q_0t-nq_0^2t^2+O(t^3),\qquad
 P_t(x,x)=1-nq_0t+O(t^2).
\]
Their ratio is $q_0t+O(t^3)$. Comparison with $\tau$ forces $K=2q_0$, and then all assertions in \eqref{eq:equality-scale} follow.
\end{proof}

\begin{lemma}\label{lem:equality-shells}
For every root $o$ and every $z$, let $i=\dist(o,z)$. Then $z$ has exactly $i$ neighbors in $S_{i-1}(o)$, none in $S_i(o)$, and $n-i$ in $S_{i+1}(o)$. Moreover, $\max_{z\in V}\dist(o,z)=n$ for every root $o$, and
\begin{equation}\label{eq:cube-spheres}
 |S_i(o)|=\binom ni\qquad(0\le i\le n).
\end{equation}
\end{lemma}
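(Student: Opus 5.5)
We work in the equality setting of Lemmas~\ref{lem:equality-ratios} and \ref{lem:equality-scale}. There $L=q_0\Delta_{\mathrm c}$ with $q_0=K/2$, the graph is bipartite and $n$-regular, and the heat kernel is symmetric. For every edge $x\sim y$ and every $z$, the ratio $P_t(x,z)/P_t(y,z)$ is identically $\tau(t)$ or identically $\tau(t)^{-1}$, with $\tau(t)=\tanh(Kt/2)$. The plan is to turn the heat equation at a fixed vertex into a counting identity. This pins down the number of neighbours in each layer, and the layer sizes then follow by double counting.

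Fix the root $o$ and write $f(z)=P_t(z,o)$. By the small-time expansion \eqref{eq:small-time} and $\tau(t)\to0$, an edge $z\sim w$ with $\dist(o,w)=\dist(o,z)+1$ must have $f(w)/f(z)=\tau(t)$ for all $t$. Bipartiteness rules out edges inside a layer. Hence, for $z\in S_i(o)$, every neighbour in $S_{i-1}$ has value $f(z)/\tau$ and every neighbour in $S_{i+1}$ has value $\tau f(z)$. Let $\alpha_z$ and $\beta_z=n-\alpha_z$ be the numbers of such neighbours. The heat equation $\partial_t f(z)=q_0\sum_{w\sim z}(f(w)-f(z))$ then becomes
\[
 \partial_t\log f(z)=q_0\bigl(\alpha_z(\tau^{-1}-1)+(n-\alpha_z)(\tau-1)\bigr).
\]
The same reasoning applied to a neighbour $w\in S_{i-1}$ gives $\partial_t\log f(z)=\partial_t\log f(w)-\partial_t\log\tau$. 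Using $\tau=\tanh(Kt/2)$ and $q_0=K/2$, one has $\partial_t\log\tau=K/\sinh(Kt)=q_0(\tau^{-1}-\tau)$. Subtracting the two expressions yields $q_0(\alpha_z-\alpha_w)(\tau^{-1}-\tau)=q_0(\tau^{-1}-\tau)$, so $\alpha_z=\alpha_w+1$. Since $\alpha_o=0$, induction on the layers gives $\alpha_z=i$ for $z\in S_i(o)$. This is the first assertion.

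From $\alpha_z=i$ and $\alpha_z\le n$ we get $S_i(o)=\emptyset$ for $i>n$. A vertex of $S_i$ with $i<n$ has $n-i>0$ neighbours in $S_{i+1}$, so $S_{i+1}\neq\emptyset$ whenever $S_i\neq\emptyset$. Hence the maximal distance from $o$ is exactly $n$.

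For \eqref{eq:cube-spheres}, count edges between $S_i$ and $S_{i+1}$ from both sides: $|S_i|(n-i)=|S_{i+1}|(i+1)$. Together with $|S_0|=1$ this gives $|S_i|=\binom ni$.

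The step I expect to be the main obstacle is making the ratio dichotomy into a sign determined by the layer. Concretely, one must check that for $z\in S_i$ and $w\in S_{i+1}$ it is $f(w)/f(z)=\tau$ and not $\tau^{-1}$. This uses the expansion \eqref{eq:small-time}: $f(w)/f(z)=O(t)$ as $t\to0$, which excludes $\tau^{-1}\to\infty$. After that, the rest is bookkeeping, together with the identity $\partial_t\log\tau=q_0(\tau^{-1}-\tau)$, which holds precisely because $q_0=K/2$ and $a=1$ from Lemma~\ref{lem:equality-scale}.
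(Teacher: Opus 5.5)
Your proof is correct and is essentially the paper's argument. Both compare the heat equation at $z$ (with $\alpha_z$ inward neighbours carrying value $f(z)/\tau$ and $n-\alpha_z$ outward neighbours carrying $\tau f(z)$) against the logarithmic derivative forced by the ratio structure. The paper does this in one step via the radial formula $P_t(o,z)=P_t(o,o)\tau^{i}$ and the equation at the root, while you go through an inward neighbour and induct on $i$.

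One small slip: since $f(z)=\tau f(w)$ for $w\in S_{i-1}$, the relation should read $\partial_t\log f(z)=\partial_t\log f(w)+\partial_t\log\tau$. That is the sign your subsequent identity $\alpha_z=\alpha_w+1$ actually uses.
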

\begin{proof}
Write $q_0=K/2$ and $\tau=\tanh(q_0t)$. For an edge pointing away from $o$, \eqref{eq:small-time} selects the ratio $\tau$. Along a geodesic and by symmetry,
\begin{equation}\label{eq:radial-equality-kernel}
 P_t(o,z)=A_o(t)\tau(t)^{\dist(o,z)},\qquad A_o(t)=P_t(o,o).
\end{equation}
At the root, the heat equation gives $A_o'=nq_0(\tau-1)A_o$ and $A_o(0)=1$. Therefore
\[
 A_o(t)=\bigl(\ee^{-q_0t}\cosh(q_0t)\bigr)^n
 =\left(\frac{1+\ee^{-Kt}}2\right)^n.
\]
Let $c_o(z)$ be the number of inward neighbors of $z\in S_i(o)$. There are no same-layer edges. Differentiating \eqref{eq:radial-equality-kernel} and using $\tau'/\tau=q_0(\tau^{-1}-\tau)$ gives
\[
 \frac{\partial_tP_t(o,z)}{P_t(o,z)}
 =nq_0(\tau-1)+iq_0(\tau^{-1}-\tau).
\]
Applying the heat equation in the variable $z$ instead gives
\[
 \begin{aligned}
 \frac{\partial_tP_t(o,z)}{P_t(o,z)}
 &=\frac{q_0}{P_t(o,z)}
   \sum_{y\sim z}\bigl(P_t(o,y)-P_t(o,z)\bigr)
 &=nq_0(\tau-1)+c_o(z)q_0(\tau^{-1}-\tau).
 \end{aligned}
\]
Comparison with the preceding expression gives
\[
 (i-c_o(z))q_0(\tau^{-1}-\tau)=0,
 \qquad\text{hence}\qquad c_o(z)=i.
\]
At a farthest vertex every neighbor is inward, so its distance is $n$. Counting edges between consecutive spheres yields
$(n-i)|S_i(o)|=(i+1)|S_{i+1}(o)|$, which proves \eqref{eq:cube-spheres}.
\end{proof}

\begin{theorem}[Weighted entropy equality and rigidity]\label{thm:weighted-rigidity}
For a nontrivial graph satisfying \eqref{eq:standing}, the following are equivalent:
\begin{enumerate}[label=(\roman*)]
\item $H(\pi)=(2D_\pi/K)F(Kd_\pi/(2D_\pi))$;
\item $\log(\vol(G)/m_{\max})=(2D/K)F(Kd/(2D))$;
\item $G=(H_n, KC/2, C)$ for some $n\geq 1$ and positive constant $C$.
\end{enumerate}
In the equality case $n=d=2D/K$.
\end{theorem}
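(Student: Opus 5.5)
I will prove the cycle (iii)$\Rightarrow$(ii)$\Rightarrow$(i)$\Rightarrow$(iii), using the chain of inequalities in Theorem~\ref{thm:maintheorem} together with the structural Lemmas~\ref{lem:equality-ratios}--\ref{lem:equality-shells}.

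\emph{(iii)$\Rightarrow$(ii).} Take $G=(H_n,KC/2,C)$. Every vertex has mass $C$, so $\vol(G)/m_{\max}=2^n$. The rate on each edge is $q=K/2$, so $\Deg\equiv nK/2=D$ and $d=n$. Hence $Kd/(2D)=1$, and Lemma~\ref{lem:F} gives
\[
 (2D/K)F(1)=n\log 2=\log(\vol(G)/m_{\max}).
\]

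\emph{(ii)$\Rightarrow$(i).} By Theorem~\ref{thm:maintheorem} we have the chain
\[
 \log(\vol/m_{\max})\le H(\pi)\le (2D_\pi/K)F(Kd_\pi/(2D_\pi))\le (2D/K)F(Kd/(2D)).
\]
Under (ii) the two ends coincide, so every inequality in the chain is an equality. In particular (i) holds. We also record two facts for later use. First, $H(\pi)=\log(\vol/m_{\max})$ forces $m$ to be constant. Second, equality in the last step, combined with the strict monotonicity in Lemma~\ref{lem:F}(iii), forces $d_\pi=d$ and $D_\pi=D$.

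\emph{(i)$\Rightarrow$(iii).} This is the substantive step. Lemmas~\ref{lem:equality-ratios} and \ref{lem:equality-scale} already give the following:
\begin{itemize}
\item $m\equiv C$ is constant and every rate equals $q_0=K/2$, so $b\equiv CK/2$;
\item $a=1$, and $G$ is bipartite and $n$-regular.
\end{itemize}
Then $d_\pi=d=n$ and $D_\pi=D=nK/2$, so $n=2D/K$. Lemma~\ref{lem:equality-shells} gives $|V|=2^n$ and, from every root $o$, the sphere structure of the hypercube: each $z\in S_i(o)$ has $i$ inward and $n-i$ outward neighbours.

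It remains to show $G\cong H_n$. This combinatorial identification is the main obstacle.

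My first approach is to cite the characterization of the hypercube among regular graphs: a connected $n$-regular graph in which any two vertices at distance $2$ have exactly $2$ common neighbours, and in which $|V|=2^n$ (equivalently, diameter $n$ with this layer structure), is $H_n$. The shell lemma applied at $o=x$ gives, for $z\in S_2(x)$, $c_x(z)=2$. These are exactly the common neighbours, so the graph is a $(0,2)$-graph. A $(0,2)$-graph of valency $n$ has at most $2^n$ vertices, with equality only for $H_n$ (Mulder). Our $|V|=2^n$ then settles the claim.

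Alternatively, I could argue via diameter rigidity. Since $\max_z\dist(o,z)=n=2D/K$ and all rates equal $K/2$, equality holds in \eqref{eq:diameter-bound}. Theorem~\ref{thm:diameter-rigidity} then directly yields that $G$ is the $n$-dimensional hypercube. This second route is cleaner, and I would use it, keeping the Mulder argument only as a remark.

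Finally, (i)$\Rightarrow$(iii)$\Rightarrow$(ii) closes the cycle, and the identity $n=d=2D/K$ was recorded above.
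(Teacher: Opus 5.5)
Your proposal is correct and is essentially the paper's proof. For (i)$\Rightarrow$(iii) you use Lemmas~\ref{lem:equality-ratios} and \ref{lem:equality-scale} to get constant mass, rate $K/2$ and $n$-regularity, so $n=d=2D/K$; Lemma~\ref{lem:equality-shells} then gives $\diam(G)=n=2D/K$, and the rigidity part of Theorem~\ref{thm:diameter-rigidity} concludes, exactly as in the paper. The directions (iii)$\Rightarrow$(ii)$\Rightarrow$(i) come from the explicit hypercube computation and the inequality chain of Theorem~\ref{thm:maintheorem}, as in the paper. Your alternative Mulder $(0,2)$-graph route is also valid, since bipartiteness gives adjacent vertices no common neighbours and the shell lemma gives vertices at distance $2$ exactly two, but as you say it is not needed.
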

\begin{proof}
Assume (i). Lemmas~\ref{lem:equality-ratios} and \ref{lem:equality-scale} give
$m\equiv C$ for some $C>0$, $q(x,y)=K/2$ on every oriented edge, and
$n$-regularity for some $n\ge1$. Hence  $D=\max_x\Deg(x)=nK/2.$
By Lemma~\ref{lem:equality-shells}, $\max_{z\in V}\dist(o,z)=n$ for every $o\in V$, so
\[
 \diam(G)=n=\frac{2D}{K}.
\]
Theorem~\ref{thm:diameter-rigidity} identifies the graph taking equality in Theorem~\ref{thm:maintheorem} as $H_n$.
Moreover, on every edge,
\[
 b(x,y)=m(x)q(x,y)=\frac{KC}{2},
\]
and therefore $G=(H_n,KC/2,C)$. This proves (iii), as well as
$n=d=2D/K$.

Conversely, (iii) implies equality throughout
\eqref{eq:weighted-comparison} by Example~\ref{ex:cube-model}, and hence implies
both (i) and (ii). Finally, (ii) immediately implies (i).
\end{proof}

The use of $m_{\max}$ is essential for the rigidity conclusion.
Replacing $\vol(G)/m_{\max}$ by $\vol(G)/m(o)$ for a prescribed vertex $o$ can create
equality without a hypercube. The following example is based on the Hamming
projection introduced in \cite[Definition~4.5]{LMP}.
\begin{example}[{\cite[Section~4.3]{LMP}}]
Fix $n\geq2$ and let $P_{n+1}=\{0\sim1\sim\cdots\sim n\}$ be the
weighted quotient of $H_n$, obtained by collapsing the Hamming spheres
\[
 S_i=\{v\in H_n:|v|=i\},\qquad 0\leq i\leq n.
\]
Give this path the masses and edge weights
\[
 m(i)=\binom{n}{i},\qquad
 b(i,i+1)=\binom{n}{i}(n-i),\qquad 0\leq i<n.
\]
Thus $m(i)=|S_i|$, while $b(i,i+1)$ is the number of cube edges joining
$S_i$ to $S_{i+1}$. 

\begin{figure}[htbp]
      \centering
      \input{figures/q4HammingWeightQuotient.tex}
      \caption{The weighted quotient of $H_4$}
\end{figure}

If a function $f$ on $P_{n+1}$ is lifted to $H_n$ by $Rf(v)=f(|v|)$, then
\[
 \Delta_{H_n}Rf=RL_{P_{n+1}}f.
\]
Together with $R(fg)=(Rf)(Rg)$, this identity lifts $\Gamma$ and
$\Gamma_2$ from the quotient to the cube. Hence the $\CD(2,\infty)$ bound on
$H_n$ gives $\CD(2,\infty)$ on this weighted $P_{n+1}$. Moreover,
\[
 \Deg(i)=\frac{b(i-1,i)+b(i,i+1)}{m(i)}=i+(n-i)=n,
\]
with the evident endpoint convention. Thus $D=n$, while $K=2$ and
$\vol(G)=\sum_{i=0}^n\binom{n}{i}=2^n$. Since $m(0)=1$ and
$m_{\max}=\binom{n}{\lfloor n/2\rfloor}$,
\[
 \frac{\vol(G)}{m(0)}=2^n=2^{2D/K},
 \qquad
 \frac{\vol(G)}{m_{\max}}
 =\frac{2^n}{\binom{n}{\lfloor n/2\rfloor}}
 <2^n=2^{2D/K}.
\]
For $n\geq2$, the weighted path $P_{n+1}$ is not isomorphic to $H_n$.
Thus the rooted normalized mass $\vol(G)/m(0)$ attains equality without a
hypercube, whereas the normalization in \eqref{eq:weighted-comparison}
remains strict.
\end{example}

\section{Application to the unnormalized Laplacian}\label{sec:unnormalized}
We now apply the Theorem~\ref{thm:maintheorem} and rigidity theorems to the
unnormalized Laplacian.

\begin{proof}[Proof of Theorem~\ref{thm:unnormalized}]
For $N\ge2$, take $m\equiv1$, $q(x,y)=1$, and $K=2$. Then $\pi=1/N$, $d_\pi=D_\pi=\bar d$, and $a=1$. Theorem~\ref{thm:weighted-entropy} gives
\[
 \log N=H(\pi)\le\bar dF(1)=\bar d\log2\le d\log2.
\]
Equality in the first bound is precisely (i) of Theorem~\ref{thm:weighted-rigidity}, so Theorem~\ref{thm:weighted-rigidity} identifies a cube. Equality in the outer bound $|V|\le2^d$ forces equality in both intermediate bounds. Conversely, a cube satisfies curvature and cardinality equality by Example~\ref{ex:cube-model}. 
\end{proof}

\begin{corollary}\label{cor:unnormalized-time}
Under the nontrivial hypotheses of Theorem~\ref{thm:unnormalized},
\begin{equation*}
 \E_\pi H_t\le\bar d\,h\left(\frac{1-\ee^{-2t}}2\right),
\end{equation*}
where $h(s)=-s\log s-(1-s)\log(1-s)$ for $0\leq s\leq 1$ and
\begin{equation*}
 \bar{d}\log2-\log N
 =
 \int_0^\infty\left[2\bar d\,\ee^{-2t}\artanh(\ee^{-2t})
 -\frac{\dd}{\dd t}\bigl(\E_\pi H_t\bigr)\right]\dd t.
\end{equation*}
The integrand is continuous and nonnegative at positive times.
\end{corollary}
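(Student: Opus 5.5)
The plan is to specialize the machinery of Section~\ref{sec:weighted-bound} to the unnormalized setting and then integrate Proposition~\ref{prop:joint} over $[\eps,t]$ instead of $[0,\infty)$. As in the proof of Theorem~\ref{thm:unnormalized}, take $m\equiv1$, $q(x,y)=1$ on every oriented edge, and $K=2$. Then $\pi\equiv1/N$ and $d_\pi=D_\pi=\bar d$, so $a=1$ and
\[
 b_t=\frac{1}{\ee^{4t}-1}.
\]

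The first step is to compute the comparison function $G_*$ explicitly. With $v=b_t$ one has
\[
 \frac{v}{1+v}=\ee^{-4t},
\]
so $\sqrt{v/(1+v)}=\ee^{-2t}$. The definition of $\ph$ in \eqref{eq:contrast} then gives
\[
 G_*(t)=2\bar d\,\ph(b_t)=2\bar d\,\ee^{-2t}\artanh(\ee^{-2t}).
\]
This is exactly the first term in the integrand of the claimed identity.

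The second step proves the time-dependent bound.
\begin{enumerate}[label=(\alph*)]
\item Proposition~\ref{prop:joint} gives $\frac{\dd}{\dd t}\E_\pi H_t\le G_*(t)$ for $t>0$.
\item Integrate this over $[\eps,t]$ and let $\eps\downarrow0$. This uses $\E_\pi H_\eps\to0$ (Proposition~\ref{prop:entropy-endpoints}, since $H_0=0$) and integrability of $G_*$ near $0$, which comes from \eqref{eq:phi-growth} (the integrand is $O(\log(1/s))$).
\item The result is $\E_\pi H_t\le\int_0^tG_*(s)\dd s$.
\end{enumerate}
It then remains to check that the right side equals $\bar d\,h(\sigma(t))$ with $\sigma(t)=(1-\ee^{-2t})/2$. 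We have $h(\sigma(0))=h(0)=0$ and $\sigma'(t)=\ee^{-2t}$. Moreover $h'(\sigma)=\log\frac{1-\sigma}{\sigma}$, and
\[
 \frac{1-\sigma}{\sigma}=\frac{1+\ee^{-2t}}{1-\ee^{-2t}},
\]
so $h'(\sigma)=2\artanh(\ee^{-2t})$. Hence $\frac{\dd}{\dd t}\bar d\,h(\sigma(t))=G_*(t)$, and the fundamental theorem of calculus gives the bound.

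The third step is the identity. It is \eqref{eq:integrated-difference} with these values:
\[
 \frac{2D_\pi}{K}F(a)=\bar dF(1)=\bar d\log2,\qquad H(\pi)=\log N.
\]
As a consistency check, letting $t\to\infty$ above gives $\int_0^\infty G_*=\bar d\,h(1/2)=\bar d\log2$. Nonnegativity of the integrand at positive times is \eqref{eq:joint-difference}. Continuity holds because $G_*$ is continuous on $(0,\infty)$, and $t\mapsto\E_\pi H_t$ is smooth for $t>0$: the heat kernel is analytic in $t$ and strictly positive there, so $-s\log s$ is composed away from $0$.

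No step is a real obstacle. The only care needed is at $t=0$, where $G_*$ has a logarithmic singularity and $\log p_t^x$ degenerates. This is why the integration starts at $\eps>0$ and passes to the limit using integrability of $G_*$ and continuity of $\E_\pi H_t$ at $0$.
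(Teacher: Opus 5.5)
Your proposal is correct and follows essentially the paper's route: you specialize to $m\equiv1$, $q\equiv1$, $K=2$ (so $D_\pi=d_\pi=\bar d$ and $a=1$), identify $G_*(t)=2\bar d\,\ee^{-2t}\artanh(\ee^{-2t})$, integrate Proposition~\ref{prop:joint} over $(0,t]$ to get the time-dependent bound, and read the identity off \eqref{eq:integrated-difference}. Your primitive $\int_0^tG_*(s)\dd s=\bar d\,h\bigl(\tfrac{1-\ee^{-2t}}2\bigr)$ has the right normalization, which you confirm via $h'(\sigma)=2\artanh(\ee^{-2t})$; the paper's proof instead writes $\int_0^t\ph\bigl(1/(\ee^{4s}-1)\bigr)\dd s=h(\cdot)$, which is off by a factor of $2$ (the left side equals $\tfrac12h(\cdot)$).
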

\begin{proof}
Substitute $K=2$, $D_\pi=\bar d$, and $a=1$ into Proposition~\ref{prop:joint}, Theorem~\ref{thm:weighted-entropy}. Since $H(\pi)=\log N$ and $\int_0^t\varphi\left(1/(e^{4s}-1)\right)\mathrm{d}s=h\left(\frac{1-\ee^{-2t}}2\right)$, rearranging \eqref{eq:integrated-difference} gives the displayed identity.
\end{proof}

\section{Application to the normalized Laplacian}\label{sec:normalized}
We now establish the corresponding rigidity theorem for the normalized Laplacian.

Throughout this section, $G$ is a finite connected simple graph with $N\ge2$, and its normalized Laplacian satisfies
\begin{equation}\label{eq:normalized-setting}
 L_{\mathrm n}f(x)=\frac1{d_x}\sum_{y\sim x}(f(y)-f(x)),
 \qquad \CD_{L_{\mathrm n}}(2/d,\infty).
\end{equation}
In the weighted notation of \eqref{eq:weighted-setting}, this corresponds to $m(x)=d_x$ and unit edge weights; hence $q(x,y)=1/d_x$ and $\pi(x)=d_x/\vol(G)$.

Unlike the unnormalized case, its stationary measure depends on the vertex degrees, so passing from entropy to cardinality requires a separate argument.

\begin{proposition}\label{prop:normalized-entropy}
Under \eqref{eq:normalized-setting},
\begin{equation*}
 H(\pi)\le I(G):=\frac{2d}{\vol(G)}\sum_{\{x,y\}\in E}
 F\left(\frac{\sqrt{d_xd_y}}d\right).
\end{equation*}
\end{proposition}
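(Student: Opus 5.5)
This is a direct specialization of Proposition~\ref{prop:edge-entropy} to the normalized weights, so the plan is simply to substitute and simplify. In the normalized setting \eqref{eq:normalized-setting} we have $m(x)=d_x$, $b(x,y)=1$ on every edge, $q(x,y)=1/d_x$, and $K=2/d$. First we check that the standing hypotheses \eqref{eq:standing} hold. We have $\Deg(x)=\frac{1}{d_x}\sum_{y\sim x}1=1$, so $D=1<\infty$, and the curvature constant $K=2/d$ is positive. Hence Proposition~\ref{prop:edge-entropy} applies, as does the finiteness coming from Theorem~\ref{thm:diameter-rigidity}.

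Next we evaluate the argument of $F$ in \eqref{eq:edge-entropy}. Since $\sqrt{q(x,y)q(y,x)}=1/\sqrt{d_xd_y}$,
\[
 \frac{K}{2\sqrt{q(x,y)q(y,x)}}=\frac{2/d}{2}\sqrt{d_xd_y}=\frac{\sqrt{d_xd_y}}{d}.
\]
The prefactor is $\frac4K=2d$, and $b(x,y)=1$. Substituting these gives
\[
 H(\pi)\le\frac{2d}{\vol(G)}\sum_{\{x,y\}\in E}F\left(\frac{\sqrt{d_xd_y}}d\right)=I(G),
\]
which is exactly the claim. Here $\vol(G)=\sum_xd_x=2|E|$.

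There is no real obstacle in this argument. The only points needing care are the bookkeeping of $K=2/d$ inside both the prefactor and the argument of $F$, and confirming that Proposition~\ref{prop:edge-entropy} was derived for arbitrary reversible weights. It was: the bound combines Proposition~\ref{prop:chi-budget}, applied in each orientation, with Proposition~\ref{prop:geometric-chi}, and neither step uses constant rates. That is what makes the geometric mean $\sqrt{d_xd_y}$ appear, rather than the cruder averaged bound of Theorem~\ref{thm:maintheorem}.
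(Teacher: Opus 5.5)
Your proposal is correct and is the paper's proof: substitute $K=2/d$, $b\equiv1$ and $q(x,y)=1/d_x$ into Proposition~\ref{prop:edge-entropy}. This gives the argument $\sqrt{d_xd_y}/d$ of $F$ and the prefactor $4/(K\vol(G))=2d/\vol(G)$. Your check that $\Deg\equiv1$, so that \eqref{eq:standing} holds, is a small verification the paper leaves implicit.
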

\begin{proof}
Apply Proposition~\ref{prop:edge-entropy} with $K=2/d$, $q(x,y)=1/d_x$, and $b(x,y)=1$. Its argument of $F$ is $\sqrt{d_xd_y}/d$, and $\frac{4b(x,y)}{K\vol(G)}=\frac{2d}{\vol(G)}$. 
\end{proof}

For real $1\leq k,l\leq\delta $ define
\begin{equation*}
 h_\delta(k)=\frac{\delta}{k}-1-\log\frac{\delta}{k},
 \qquad
 \cE_\delta(k,l)=2\delta\left[\log 2-F\left(\frac{\sqrt{kl}}{\delta}\right)\right]
                 -h_\delta(k)-h_\delta(l).
\end{equation*}

\begin{lemma}\label{lem:edge-compensation}
The following inequalities hold:
\begin{equation}\label{eq:edge-compensation}
 \begin{aligned}
 &\cE_\delta(k,l)\ge0 &&(2\le k,l\le\delta),\\
 &\cE_\delta(1,l)>0 &&(3\le l\le\delta).
 \end{aligned}
\end{equation}
In the first line, equality holds exactly when $k=l=\delta$.
\end{lemma}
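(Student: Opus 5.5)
The plan is to reduce the two-variable inequality to one-variable statements about $F$, using the properties in Lemma~\ref{lem:F}. Write $x=k/\delta$ and $y=l/\delta$, so that $h_\delta(k)=x^{-1}-1+\log x$, and put $c:=F'(1)$.

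\emph{First line of \eqref{eq:edge-compensation}.} I would apply the geometric-mean inequality $F(\sqrt{xy})\le\frac12\bigl(F(x)+F(y)\bigr)$ from \eqref{eq:F-properties}. Since $F(1)=\log 2$, this gives
\[
 \cE_\delta(k,l)\ge e_\delta(x)+e_\delta(y),\qquad e_\delta(x):=\delta\bigl[F(1)-F(x)\bigr]-\bigl(x^{-1}-1+\log x\bigr).
\]
Concavity of $F$ gives $F(1)-F(x)\ge c(1-x)$ for $x\le1$. The constraint $k\ge2$ means $\delta\ge 2/x$, and since $c(1-x)\ge 0$ this yields
\[
 e_\delta(x)\ge(2c-1)\frac{1-x}{x}-\log x .
\]
If $c\ge\frac12$, the right-hand side is at least $-\log x\ge0$ on $(0,1]$, with equality only at $x=1$. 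Equality in $\cE_\delta(k,l)\ge0$ then forces $x=y=1$, that is $k=l=\delta$. Conversely, $\cE_\delta(\delta,\delta)=2\delta[\log2-F(1)]-0-0=0$.

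So the first line reduces to the scalar inequality $F'(1)\ge\frac12$. Differentiating \eqref{eq:F-log-representation} under the integral with $w=1-s^2$ gives
\[
 F'(1)=\frac12\int_0^1\left[\frac{\log w+1}{w-1}-\frac{w\log w}{(w-1)^2}\right]\dd s .
\]
I would bound this integrand from below elementwise; concavity together with $F(0^+)=0$ already gives $F'(1)\le\log 2$, so the target value $\frac12$ sits comfortably inside the admissible range. An alternative route is the formula $F'(a)=\frac12\int_0^\infty\ph'(v)(v+a)^{-1}\dd v$ from the integration-by-parts identity in the proof of Lemma~\ref{lem:F}, combined with an explicit lower bound for $\ph'$.

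\emph{Second line of \eqref{eq:edge-compensation}.} Here the splitting above is too lossy. The term $e_\delta(1/\delta)\approx\delta(\log2-1)$ is negative for large $\delta$, while $e_\delta(1)=0$ cannot compensate. I would therefore keep the geometric mean and use monotonicity, $F(\sqrt l/\delta)\le F(1/\sqrt\delta)$, which reduces the claim to
\[
 2\delta\bigl[\log2-F(a)\bigr]>h_\delta(1)+h_\delta(l)\qquad\text{with } a=\sqrt l/\delta\le\delta^{-1/2}.
\]
Note that $h_\delta(1)=\delta-1-\log\delta$ and $h_\delta(l)\le\delta/3-1-\log(\delta/3)$ for $3\le l\le\delta$. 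The tangent-line bound is now insufficient for large $\delta$. Instead I would derive from \eqref{eq:F-log-representation} an explicit small-argument upper bound of the shape $F(a)\le\frac a2\bigl(\log(1/a)+C\bigr)$, using $\log b/(b-1)\le\ldots$ termwise. This makes $F(a)=O(\delta^{-1/2}\log\delta)$, and the left-hand side becomes $2\delta\log2-o(\delta)$, which beats $(1+\frac13)\delta$ since $2\log2>4/3$.

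\emph{Main obstacle.} The hardest step is the second line. The asymptotic argument must be made uniform, and the residual range of small $\delta$ (for example $3\le\delta\le\delta_0$) must be handled, for instance by the explicit bound combined with a direct verification at the finitely many or low values of $\delta$. The precise treatment of $F$ at moderate arguments (with $l$ close to $\delta$ and $a$ close to $\delta^{-1/2}$) is where the constants are tight. There I would interpolate, using concavity between the small-$a$ bound and the tangent bound at $a=1$.
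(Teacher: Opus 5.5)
Both parts of your proposal have genuine gaps.

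\textbf{First line.} Your skeleton is the right one: split via $F(\sqrt{xy})\le\frac12(F(x)+F(y))$, with the worst case at $k=2$, i.e.\ $\delta=2/x$. But the scalar inequality you reduce to is false:
\[
 F'(1)=\frac12\int_0^\infty\frac{\ph'(v)}{1+v}\dd v
 =\frac12\int_0^1\frac{-s^2-\log(1-s^2)}{s^4}\dd s
 =\frac12\sum_{n\ge2}\frac1{n(2n-3)}
 =\frac16+\frac{\log 2}{3}\approx0.398 .
\]
Hence $2F'(1)-1<0$, and your bound $(2F'(1)-1)\frac{1-x}{x}-\log x$ tends to $-\infty$ as $x\to0$. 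For $k=2$, $\delta=100$ it is about $-6$, although $\cE_{100}(2,2)>0$.

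The defect is structural. The tangent line at $a=1$ gives $\log2-F(x)\ge F'(1)(1-x)$, whose limit as $x\to0$ is about $0.398$, not $\log 2$. Yet the $k=2$ requirement $2[\log2-F(x)]\ge1-x+x\log x$ has right side tending to $1$. You need an upper bound for $F$ that is sharp as $a\to0$, such as $F(a)\le a\log2-\frac a2\log a$ on $(0,1]$. It follows by integrating
\[
 \Bigl(\frac{F(a)}a\Bigr)'+\frac1{2a}=\frac12\int_0^1(1-s^2)\,g\bigl(a(1-s^2)\bigr)\dd s\ge0,
 \qquad g(u)=\frac{u-1-\log u}{(u-1)^2},
\]
from $a$ to $1$. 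Then $2[\log2-F(x)]-[1-x+x\log x]\ge(2\log2-1)(1-x)$, and your argument, including the equality case, goes through. This is the paper's proof of the first line.

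\textbf{Second line.} You decouple the two occurrences of $l$: you bound $F(\sqrt l/\delta)$ at $l=\delta$ and $h_\delta(l)$ at $l=3$. The resulting inequality $2\delta[\log2-F(\delta^{-1/2})]>h_\delta(1)+h_\delta(3)$ is false on a long range of $\delta$. Since $F(a)\ge\frac a2(\log\frac1a+2-2\log2)$ for $0<a<1$, at $\delta=100$ its left side is at most $109.5$, while the right side is about $123.2$. Numerically it already fails at $\delta=10$. Your margin $(2\log2-\frac43)\delta\approx0.053\,\delta$ must beat the error $2\delta F(\delta^{-1/2})\approx\sqrt\delta\,(\frac12\log\delta+0.61)$, which happens only for $\delta$ of order $10^4$. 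So no residual-range verification can rescue this reduction.

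The missing idea is to keep $l$ coupled and move in $\delta$. With $a=\sqrt l/\delta$,
\[
 \partial_\delta\cE_\delta(1,l)=2\log2-1-\frac1l+\frac2\delta-2\bigl[F(a)-aF'(a)\bigr]
 \ge2\log2-1-\frac1l+\frac{2-\sqrt l}{\delta}>0\qquad(\delta\ge l\ge3),
\]
because $F-aF'=-a^2(F/a)'\le a/2$ by the identity above. Hence $\cE_\delta(1,l)\ge\cE_l(1,l)=2l\,\mathscr R(l^{-1/2})$, using $h_l(l)=0$ and $\mathscr R(a)=\log2-\frac12+\frac{a^2}2-a^2\log a-F(a)$.

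Even this one-variable inequality is tight. With the crude envelope, the lower bound for $\mathscr R(1/\sqrt3)$ (the case $l=\delta=3$) is about $-0.016$. You therefore need the refined envelope $F(a)\le a\log2-\frac a2\log a-\frac a5(1-a)-\frac{2a}{45}(1-a)^2$. The paper derives it from $-g'\ge\frac13$ on $(0,1]$ together with $\frac12\int_0^1(1-s^2)g(1-s^2)\dd s>\frac15$.
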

For the proof of Lemma~\ref{lem:edge-compensation}, see Appendix~\ref{sec:scalar}.
\begin{lemma}\label{lem:counting}
Under \eqref{eq:normalized-setting}, let $\eta=dN/\vol(G)$. Then
\begin{equation}\label{eq:normalized-exact}
 \begin{aligned}
 d\log 2-\log N={}&\frac{1}{\vol(G)}\sum_{\{x,y\}\in E}\cE_d(d_x,d_y)+\bigl(I(G)-H(\pi)\bigr)+\bigl(\eta-1-\log\eta\bigr).
 \end{aligned}
\end{equation}
The last two terms are nonnegative. Also $\eta\ge1$, with equality of the last term exactly when $G$ is $d$-regular.
\end{lemma}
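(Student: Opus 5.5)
The identity \eqref{eq:normalized-exact} is purely algebraic, so the plan is to expand each term on the right-hand side in terms of the degree sequence and check that everything except $d\log2-\log N$ cancels. The nonnegativity claims then follow from Proposition~\ref{prop:normalized-entropy} and the elementary inequality $s-1-\log s\ge0$.

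\textbf{Step 1: the edge sum.} Use $\vol(G)=\sum_x d_x=2|E|$. By definition of $\cE_d$ and of $I(G)$,
\[
\frac1{\vol(G)}\sum_{\{x,y\}\in E}\cE_d(d_x,d_y)
= \frac{2d|E|\log2}{\vol(G)} - I(G) - \frac1{\vol(G)}\sum_{\{x,y\}\in E}\bigl(h_d(d_x)+h_d(d_y)\bigr).
\]
The first term equals $d\log 2$.

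\textbf{Step 2: convert the edge sum to a vertex sum.} Each vertex $x$ appears in exactly $d_x$ edges, so
\[
\sum_{\{x,y\}\in E}\bigl(h_d(d_x)+h_d(d_y)\bigr)=\sum_x d_x h_d(d_x)=\sum_x\bigl(d-d_x-d_x\log d+d_x\log d_x\bigr).
\]
The right side equals $dN-\vol(G)-\vol(G)\log d+\sum_x d_x\log d_x$. After dividing by $\vol(G)$ this becomes
\[
\eta-1-\log d+\frac{1}{\vol(G)}\sum_x d_x\log d_x .
\]

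\textbf{Step 3: the entropy term.} Since $\pi(x)=d_x/\vol(G)$, we have
\[
H(\pi)=\log\vol(G)-\frac1{\vol(G)}\sum_x d_x\log d_x .
\]

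\textbf{Step 4: assemble.} Substitute Steps 1--3 into the right side of \eqref{eq:normalized-exact}:
\begin{itemize}
\item the two copies of $I(G)$ cancel;
\item the two copies of $\eta-1$ cancel;
\item the two copies of $\frac1{\vol(G)}\sum_x d_x\log d_x$ cancel.
\end{itemize}
What remains is $d\log2+\log d-\log\vol(G)-\log\eta$. Since $\log\eta=\log d+\log N-\log\vol(G)$, this equals $d\log2-\log N$, which is the left side.

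\textbf{Step 5: nonnegativity and the equality case.}
\begin{itemize}
\item $I(G)-H(\pi)\ge0$ is exactly Proposition~\ref{prop:normalized-entropy}.
\item $\eta-1-\log\eta\ge0$ holds because $s-1-\log s\ge0$ for all $s>0$, with equality only at $s=1$.
\item $\eta\ge1$ because $\vol(G)=\sum_x d_x\le dN$. Equality $\vol(G)=dN$ holds iff $d_x=d$ for every $x$, i.e. iff $G$ is $d$-regular.
\end{itemize}
Hence the last term of \eqref{eq:normalized-exact} vanishes iff $\eta=1$, iff $G$ is $d$-regular.

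The only point needing care is the bookkeeping in Step 2, namely the factor $d_x$ from counting edge incidences. No step presents a real obstacle.
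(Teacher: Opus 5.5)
Your proof is correct and follows essentially the same route as the paper. Both expand the edge sum of $\cE_d$ as $d\log 2 - I(G)$ minus the edge sum of $h_d$ terms, convert that sum into $\E_\pi h_d(d_x)$ by incidence counting, and identify the result with $\log N - H(\pi) + \eta - 1 - \log\eta$; the paper packages your explicit $\sum_x d_x\log d_x$ cancellation as $\log N - H(\pi) = \log\eta - \E_\pi\log(d/d_x)$, and the nonnegativity and equality claims are handled identically, via Proposition~\ref{prop:normalized-entropy} and $\vol(G)\le dN$.
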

\begin{proof}
Since $\pi(x)=d_x/\vol(G)$ and $\eta=dN/\vol(G)$,
\[
 \log N-H(\pi)
 =\sum_x\pi(x)\log\bigl(N\pi(x)\bigr)
 =\log\eta-\E_\pi\log\frac d{d_x}.
\]
Counting every vertex $x$ once for each of its $d_x$ incident edges gives
\begin{equation}\label{eq:counting-cost}
 \begin{aligned}
 &\frac{1}{\vol(G)}\sum_{\{x,y\}\in E}\bigl(h_d(d_x)+h_d(d_y)\bigr)\\
 =&\E_\pi h_d(d_x)
 =\log N-H(\pi)+\eta-1-\log\eta.
 \end{aligned}
\end{equation}
On the other hand, the definitions of $\cE_d$ and $I(G)$, together with $\vol(G)=2|E|$, yield
\[
 \frac{1}{\vol(G)}\sum_{\{x,y\}\in E}\cE_d(d_x,d_y)
 =d\log2-I(G)
  -\frac{1}{\vol(G)}\sum_{\{x,y\}\in E}\bigl(h_d(d_x)+h_d(d_y)\bigr).
\]
Substituting \eqref{eq:counting-cost} and rearranging proves \eqref{eq:normalized-exact}. Proposition~\ref{prop:normalized-entropy} gives $I(G)-H(\pi)\ge0$, while $\vol(G)\le dN$ gives $\eta\ge1$ and hence $\eta-1-\log\eta\ge0$. Moreover, $\eta=1$ holds exactly when every $d_x=d$.
\end{proof}

By Lemma~\ref{lem:edge-compensation}, if the degree pair $(d_x,d_y)$ with $d_x\leq d_y$ belongs to 
\[
 \{(k,l):2\le k,l\le d\}\cup\{(1,l):3\le l\le d\},
\]
then  $\cE_d(k,l)\ge0$. If the degree pair is $(1,1)$, the only possible graph is $K_2$. Consequently, the only pair not
covered by \eqref{eq:edge-compensation}, and hence to be treated directly from
the curvature condition, is
\[
 \{d_x,d_y\}=\{1,2\}.
\]

\begin{lemma}\label{lem:two-branch}
Let $v$ have exactly two neighbors $u,w$ of degrees $r,b$. Suppose $u,w$ are not adjacent and
$(N(u)\setminus\{v\})\cap(N(w)\setminus\{v\})=\varnothing$.
Prescribe $f(v)=0$, $f(u)=s$, $f(w)=t$, and set $f=2s$ on $N(u)\setminus\{v\}$ and $f=2t$ on $N(w)\setminus\{v\}$. For the normalized Laplacian $L_{\mathrm n}$,
\begin{equation}\label{eq:two-branch}
 \Gamma(f)(v)=\frac{s^2+t^2}{4},\qquad
 8\Gamma_2(f)(v)=\left(\frac4r-1\right)s^2+2st+
                    \left(\frac4b-1\right)t^2.
\end{equation}
\end{lemma}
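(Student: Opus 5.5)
The plan is a direct local computation at $v$, using the identity $\Gamma_2(f)=\tfrac12 L_{\mathrm n}\Gamma(f)-\Gamma(f,L_{\mathrm n}f)$ from the definition in Section~\ref{sec:heat}. Only the values of $f$ on the ball of radius $2$ around $v$ enter, since $\Gamma_2(f)(v)$ involves $\Gamma(f)$ and $L_{\mathrm n}f$ at $v,u,w$ only.

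\emph{Well-definedness.} The hypotheses make the prescription of $f$ consistent.
\begin{itemize}
\item Since $u\not\sim w$, neither $u$ nor $w$ lies in the other's set $N(\cdot)\setminus\{v\}$.
\item Since $(N(u)\setminus\{v\})\cap(N(w)\setminus\{v\})=\varnothing$, no vertex receives both values $2s$ and $2t$.
\item The vertex $v$ is excluded from both sets.
\end{itemize}
Thus $f$ is well defined on $B_2(v)$ and can be extended arbitrarily elsewhere.

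\emph{Step 1: $\Gamma(f)$ at $v,u,w$.} With $q(x,y)=1/d_x$ and $d_v=2$,
\[
\Gamma(f)(v)=\tfrac12\cdot\tfrac12(s^2+t^2)=\tfrac{s^2+t^2}{4},
\]
which is the first identity. At $u$, the neighbor $v$ gives increment $-s$, and each of the $r-1$ other neighbors gives increment $2s-s=s$. Hence
\[
\Gamma(f)(u)=\tfrac{1}{2r}\bigl(s^2+(r-1)s^2\bigr)=\tfrac{s^2}{2},
\]
and symmetrically $\Gamma(f)(w)=\tfrac{t^2}{2}$. Therefore
\[
L_{\mathrm n}\Gamma(f)(v)=\tfrac12\Bigl(\tfrac{s^2}{2}+\tfrac{t^2}{2}\Bigr)-\tfrac{s^2+t^2}{4}=0.
\]
This vanishing is exactly why the factor $2$ was chosen on the second layer.

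\emph{Step 2: $L_{\mathrm n}f$ at $v,u,w$.} We have
\[
L_{\mathrm n}f(v)=\tfrac{s+t}{2},\qquad
L_{\mathrm n}f(u)=\tfrac1r\bigl(-s+(r-1)s\bigr)=\tfrac{(r-2)s}{r},\qquad
L_{\mathrm n}f(w)=\tfrac{(b-2)t}{b}.
\]

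\emph{Step 3: assemble.} Since $L_{\mathrm n}\Gamma(f)(v)=0$, we get $\Gamma_2(f)(v)=-\Gamma(f,L_{\mathrm n}f)(v)$, where
\[
\Gamma(f,L_{\mathrm n}f)(v)=\tfrac14\Bigl[s\Bigl(\tfrac{(r-2)s}{r}-\tfrac{s+t}{2}\Bigr)+t\Bigl(\tfrac{(b-2)t}{b}-\tfrac{s+t}{2}\Bigr)\Bigr].
\]
Multiplying by $-8$, the coefficient of $s^2$ is $-2\bigl(\tfrac{r-2}{r}-\tfrac12\bigr)=\tfrac4r-1$, that of $t^2$ is $\tfrac4b-1$, and the cross term is $2st$. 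This gives the second identity in \eqref{eq:two-branch}.

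There is no real obstacle; the only points needing care are the bookkeeping of neighbor increments and the consistency of $f$, which the two non-adjacency and disjointness hypotheses guarantee.
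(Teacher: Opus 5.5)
Your proposal is correct and follows the paper's proof essentially line by line. It uses the same consistency check on the two-ball, the same values $\Gamma(f)(u)=s^2/2$ and $\Gamma(f)(w)=t^2/2$ forcing $L_{\mathrm n}\Gamma(f)(v)=0$, and the same expression for $\Gamma(f,L_{\mathrm n}f)(v)$ substituted into $\Gamma_2=\tfrac12 L_{\mathrm n}\Gamma-\Gamma(f,L_{\mathrm n}f)$, and your coefficient bookkeeping is right.
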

\begin{proof}
The hypotheses make the assignments consistent; values outside this two-ball do not enter $\Gamma_2(f)(v)$. Direct calculation gives
\[
 L_{\mathrm n}f(v)=\frac{s+t}{2},\quad
 L_{\mathrm n}f(u)=\left(1-\frac2r\right)s,
 \quad L_{\mathrm n}f(w)=\left(1-\frac2b\right)t.
\]
All increments from $u$ have absolute value $|s|$, and those from $w$ have absolute value $|t|$. Thus $\Gamma(f)(u)=s^2/2$, $\Gamma(f)(w)=t^2/2$, and $L_{\mathrm n}\Gamma(f)(v)=0$. Moreover,
\[
 \Gamma(f,L_{\mathrm n}f)(v)=\frac14\left[
 s\left(\left(1-\frac2r\right)s-\frac{s+t}{2}\right)
 +t\left(\left(1-\frac2b\right)t-\frac{s+t}{2}\right)\right].
\]
Substitution in
$\Gamma_2=L_{\mathrm n}\Gamma/2-\Gamma(f,L_{\mathrm n}f)$ proves
\eqref{eq:two-branch}.
\end{proof}

\begin{lemma}\label{lem:leaf-two}
Under \eqref{eq:normalized-setting}, the existence of an edge with degrees $(1,2)$ implies $G=P_3$.
\end{lemma}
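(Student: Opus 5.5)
The plan is to exhibit a test function $f$ at the degree-$2$ vertex $v$ of the $(1,2)$-edge that violates $\Gamma_2(f)(v)\ge\frac2d\Gamma(f)(v)$ unless the other neighbour of $v$ is also a leaf. Let $u$ be the leaf, $v$ its unique neighbour with $d_v=2$, and $w$ the other neighbour of $v$, with $b=d_w\le d$. If $b=1$, connectedness forces $G=P_3$, so assume $b\ge2$ and aim for a contradiction.

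\emph{Setting up Lemma~\ref{lem:two-branch}.} Its hypotheses hold with $r=d_u=1$. Indeed $u$ and $w$ are not adjacent, since $u$ is a leaf, and $N(u)\setminus\{v\}=\varnothing$, so the disjointness condition is vacuous. With $K=2/d$, condition $\CD(2/d,\infty)$ at $v$ becomes nonnegativity of the quadratic form
\[
 Q(s,t)=\Bigl(3-\tfrac4d\Bigr)s^2+2st+\Bigl(\tfrac4b-1-\tfrac4d\Bigr)t^2 .
\]
I would first check whether $Q$ fails to be positive semidefinite. Since $b\ge2$, the $t^2$ coefficient is at most $1-4/d$. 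The determinant condition $(3-x)(1-x)\ge1$ with $x=4/d$ fails exactly when $x>2-\sqrt2$, i.e.\ $d\le6$. So for $d\le 6$ (and all $2\le b\le d$) Lemma~\ref{lem:two-branch} already gives the contradiction, with explicit $s,t$ of opposite signs.

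\emph{Large $d$.} This is the main obstacle: for $b=2$ and $d\ge7$ the form $Q$ is nonnegative, so the rigid choice $f=2t$ on $N(w)\setminus\{v\}$ is not enough. My first attempt is a one-parameter family. Keep $f(v)=0$, $f(u)=s$, $f(w)=t$, and set $f=\alpha t$ on the $b-1$ outer neighbours of $w$. These vertices are at distance $2$ from $v$, and $u$ has no outer neighbours, so the assignment is consistent. The local quantities are
\[
 \Gamma(f)(u)=\tfrac{s^2}{2},\qquad
 \Gamma(f)(w)=\frac{t^2\bigl(1+(b-1)(\alpha-1)^2\bigr)}{2b},\qquad
 L_{\mathrm n}f(u)=-s,\qquad
 L_{\mathrm n}f(w)=\frac{t\bigl((b-1)(\alpha-1)-1\bigr)}{b},
\]
while $\Gamma(f)(v)=(s^2+t^2)/4$ and $L_{\mathrm n}f(v)=(s+t)/2$ are unchanged. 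Then $8\Gamma_2(f)(v)$ is a quadratic form in $(s,t)$ whose $t^2$ coefficient is a quadratic polynomial in $\alpha$ with positive leading coefficient $(b-1)/b$. The term $-\Gamma(f,L_{\mathrm n}f)(v)$ contributes a negative part linear in $\alpha$. Minimising over $\alpha$, and then over the ratio $s/t$, gives the exact $\Gamma_2$-curvature lower bound at $v$ as an explicit function of $b$ alone. I expect this to come out at most $c/b$ for a constant $c<2$, or otherwise strictly below $2/d$ for every $d\ge b\ge2$; that would give the contradiction. The key step is checking that this minimum is below $2/d$ uniformly in $d$; the dangerous regime is $d$ large compared with $b$. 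If the optimised form is still nonnegative for some $(b,d)$, I would further vary the values of $f$ on the outer neighbours individually. Failing that, I would combine with the curvature inequality at $w$, where the edge $wv$ leads into the short branch $v$--$u$, so a function concentrated on that branch should exhibit curvature near $0$ there.

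\emph{Conclusion.} Once every case $b\ge2$ is excluded, $w$ is a leaf, so $V=\{u,v,w\}$ and $G=P_3$ (which has $d=2$). This is consistent with Lemma~\ref{lem:counting}, since $P_3$ satisfies $|V|=3<4=2^d$.
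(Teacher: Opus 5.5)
Your $d\le 6$ analysis is correct. The case $d_w\ge 3$ is in fact settled at $v$ for every $d$: the $t^2$-coefficient is then at most $\frac13-\frac4d$, and $(3-x)(\frac13-x)<1$ for $0<x\le 2$; the paper uses $s=-\frac13$, $t=1$.

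\textbf{The gap.} The case $d_w=2$, $d\ge 7$ is left open, and your main plan for it cannot succeed. When $d_w=2$, $\Gamma_2(f)(v)$ involves only $f$ on $\{u,v,w,z\}$, where $z$ is the other neighbour of $w$, and only the degrees $d_u=1$, $d_v=d_w=2$. Writing $f(v)=0$, $f(u)=s$, $f(w)=t$, $f(z)=\zeta$, one finds
\[
 16\,\Gamma_2(f)(v)=(\zeta-2t)^2+6s^2+4st+2t^2,\qquad 16\,\Gamma(f)(v)=4(s^2+t^2).
\]
Consequences:
\begin{itemize}
\item Your $\alpha$-family is optimised exactly at $\alpha=2$, which is just Lemma~\ref{lem:two-branch}.
\item There is nothing further to vary, since $z$ is the only outer neighbour.
\item The sharp curvature at $v$ is $1-1/\sqrt2$, which exceeds $2/d$ for all $d\ge 7$.
\end{itemize}
So $\CD(2/d,\infty)$ genuinely holds at $v$ in this regime, and no test function at $v$ can produce a contradiction. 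Your anticipated bound $c/b$ with $c<2$ is actually attained, with $c=2-\sqrt2$. But it does not contradict $2/d$, because $d$ may be much larger than $b$.

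\textbf{The missing step: move to $w$.} Only your fallback gestures at this, and it is what the paper does.
\begin{itemize}
\item Since $u$ is a leaf, $z\ne u$ and $z\not\sim v$. Also $(N(v)\setminus\{w\})\cap(N(z)\setminus\{w\})=\{u\}\cap N(z)=\varnothing$. So Lemma~\ref{lem:two-branch} applies at $w$ with neighbour degrees $2$ and $d_z$.
\item Take $s=-1$, $t=1$. This is the linear profile $-2,-1,0,1,2$ along the path, not a function concentrated on the branch $v$--$u$.
\item It gives $\Gamma(f)(w)=\frac12>0$ and $8\Gamma_2(f)(w)=\frac4{d_z}-2$, which is $\le 0$ unless $d_z=1$.
\item Since $\Gamma_2(f)(w)\le 0$ contradicts positive curvature for every $d$, you get $d_z=1$. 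Hence $G=P_4$ and $d=2$.
\item You are then back in your own small-$d$ case at $v$: for $s=-1$, $t=1$ one has $\Gamma_2(f)(v)=\frac14<\frac12=\Gamma(f)(v)$, violating $\CD(1,\infty)$.
\end{itemize}
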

\begin{proof}
Let a leaf $u$ be adjacent to a degree-two vertex $v$, whose other neighbor is $w$ of degree $r$. Lemma~\ref{lem:two-branch} applies at $v$ with neighbor degrees $1,r$. If $r\ge3$, choose $s=-1/3$, $t=1$. Then $\Gamma(f)(v)>0$ but
$8\Gamma_2(f)(v)=4/r-4/3\le0$, contradicting the positive curvature lower bound. If $r=1$, connectedness gives $P_3$.

Suppose $r=2$ and let $z\ne v$ be the other neighbor of $w$, with degree $b$.  $v$ already has neighbors $u,w$, and the leaf $u$ cannot be adjacent to $z$. Thus $z\not\sim v$ and $N(v)\setminus\{w\}\cap N(z)\setminus\{w\}=\varnothing$. With neighbor degrees $2,b$ and $s=-1$, $t=1$, \eqref{eq:two-branch} gives $\Gamma(f)(w)>0$ and $8\Gamma_2(f)(w)=4/b-2$. Positivity forces $b=1$, so connectedness gives $G=P_4$ and $d=2$.

But $P_4$ fails the required $\CD(1,\infty)$. At $v$, with neighbor degrees $1,2$ and $s=-1$, $t=1$, formula \eqref{eq:two-branch} gives $\Gamma(f)(v)=1/2$ and $\Gamma_2(f)(v)=1/4$. This contradiction leaves only $P_3$.
\end{proof}

\begin{proof}[Proof of Theorem~\ref{thm:normalized}]
An edge with a degree pair $(1,1)$ forces $K_2$, which has $N=2=2^d$. An edge with a degree pair $(1,2)$ forces $P_3$ by Lemma~\ref{lem:leaf-two}; here $N=3<4=2^d$. Exclude these two graphs. For every remaining edge, Lemma~\ref{lem:edge-compensation} gives $\cE_d(d_x,d_y)\ge0$, so the edge sum in \eqref{eq:normalized-exact} is nonnegative. The two other terms are nonnegative as well. This proves \eqref{eq:normalized-final}.

If equality holds, \eqref{eq:normalized-exact} forces $\eta-1-\log\eta=0$. Since $\eta\ge1$, $\eta=1$, and Lemma~\ref{lem:counting} gives $d_x=d$ at every vertex. Since $G$ is now known to be $d$-regular, the normalized Laplacian
$L_{\mathrm n}$ and the unnormalized Laplacian $\Delta_{\mathrm c}$
are related by
\[
 L_{\mathrm n}=\frac1d\Delta_{\mathrm c},\qquad
 \Gamma_{L_{\mathrm n}}=\frac1d\Gamma_{\Delta_{\mathrm c}},\qquad
 \Gamma_{2,L_{\mathrm n}}=\frac1{d^2}\Gamma_{2,\Delta_{\mathrm c}}.
\]
Thus $\CD_{L_{\mathrm n}}(2/d,\infty)$ implies $\CD_{\Delta_{\mathrm c}}(2,\infty)$. Since $N=2^d$, Theorem~\ref{thm:unnormalized} identifies $H_d$. Conversely, Example~\ref{ex:cube-model} and the same scaling verify $\CD(2/d,\infty)$ on the normalized cube, which has $2^d$ vertices.

\end{proof}

\begin{corollary}\label{cor:normalized-irregularity-bound}
Under Theorem~\ref{thm:normalized}, if $G\notin\{K_2,P_3\}$, then
\begin{equation*}
 \log\frac{2^d}{N}
 \ge\frac{1}{\vol(G)}\sum_{\{x,y\}\in E}\cE_d(d_x,d_y)
       +\frac{dN}{\vol(G)}-1-\log\frac{dN}{\vol(G)}\ge0.
\end{equation*}
Both terms on the right are strictly positive for a nonregular graph.
\end{corollary}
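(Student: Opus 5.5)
The plan is to read the corollary directly off the exact identity \eqref{eq:normalized-exact} in Lemma~\ref{lem:counting}, which rewrites $d\log2-\log N=\log(2^d/N)$ as the sum of three pieces:
\[
\frac{1}{\vol(G)}\sum_{\{x,y\}\in E}\cE_d(d_x,d_y),\qquad I(G)-H(\pi),\qquad \eta-1-\log\eta,
\]
with $\eta=dN/\vol(G)$. Proposition~\ref{prop:normalized-entropy} gives $I(G)-H(\pi)\ge0$. Dropping this term yields the first inequality, and the right-hand side is exactly the edge sum plus $\eta-1-\log\eta$.

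For the second inequality ($\ge0$) I would reuse the case analysis from the proof of Theorem~\ref{thm:normalized}. Since $G\notin\{K_2,P_3\}$, no edge has degree pair $(1,1)$, which forces $K_2$, nor $(1,2)$, which forces $P_3$ by Lemma~\ref{lem:leaf-two}. Every edge therefore falls into one of the ranges of Lemma~\ref{lem:edge-compensation}, so each $\cE_d(d_x,d_y)\ge0$. The term $\eta-1-\log\eta$ is nonnegative because $\eta\ge1$.

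For the strict positivity claim, suppose $G$ is nonregular. Then $\vol(G)<dN$, so $\eta>1$, and strict convexity of $\eta\mapsto\eta-1-\log\eta$ at $\eta=1$ makes that term strictly positive. For the edge sum it suffices to find one edge with $\cE_d>0$. By the equality statement in Lemma~\ref{lem:edge-compensation}, on pairs $2\le k,l\le d$ equality holds only when $k=l=d$, and pairs $(1,l)$ with $l\ge3$ are strictly positive. So it is enough to find an edge that is not a $(d,d)$ edge.

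The only step that needs an argument is producing that edge. Since $G$ is nonregular, some vertex has degree less than $d$. Since $G$ is connected with $N\ge2$, that vertex has at least one incident edge. Such an edge is not of type $(d,d)$, so it contributes strictly positively, and the edge sum is strictly positive.
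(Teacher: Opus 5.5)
Your proposal is correct and follows the paper's proof essentially step for step. Like the paper, you drop $I(G)-H(\pi)\ge0$ from \eqref{eq:normalized-exact}, get $\eta>1$ from nonregularity, and obtain strict positivity of the edge sum from an edge incident to a vertex of degree less than $d$, via the strict part of Lemma~\ref{lem:edge-compensation}. You also spell out the exclusion of the degree pairs $(1,1)$ and $(1,2)$ (via $G\notin\{K_2,P_3\}$ and Lemma~\ref{lem:leaf-two}), which the paper's terser proof leaves implicit.
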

\begin{proof}
Drop only $I(G)-H(\pi)\ge0$ from \eqref{eq:normalized-exact}. The final scalar term is strictly positive when $\vol(G)<dN$. For the first term, choose a vertex $x$ whose $d_x<d$. For any edge $x\sim y$, by the strict part of Lemma~\ref{lem:edge-compensation}, $\cE_d(d_x,d_y)>0$. All other edge contributions are nonnegative.
\end{proof}

\section{Acknowledgments}
This work was supported by the Scientific Research Innovation Capability Support Project for Young Faculty (Grant No. SRICSPYF-ZY2025160) and the National Natural Science Foundation of China (Grant No. 12431004).

\appendix
\section{Proof of Lemma~\ref{lem:edge-compensation}}\label{sec:scalar}

The normalized argument in Section~\ref{sec:normalized} requires quantitative control of the degree-dependent terms that arise when entropy is converted into cardinality. We continue to use the function $F$ from \eqref{eq:F}. The next two lemmas provide an elementary upper bound for $F$ and a strict barrier for the leaf case; together they lead to the edgewise estimate in Lemma~\ref{lem:edge-compensation}.

\begin{lemma}\label{lem:envelope}
For $0<a\le1$,
\begin{equation}\label{eq:new-envelope}
 F(a)\le a\log 2-\frac a2\log a-\frac a5(1-a)
                  -\frac{2a}{45}(1-a)^2.
\end{equation}
In addition,
\begin{equation}\label{eq:F-derivative-bound}
 F(a)-aF'(a)
 \le\frac a2-\frac{a^2}5-\frac{4a^2}{45}(1-a)
 \le\frac a2.
\end{equation}
\end{lemma}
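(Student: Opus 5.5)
The plan is to remove the logarithmic singularity from the representation \eqref{eq:F-log-representation} and to reduce both inequalities to a single lower bound on one derivative. I would work with $G(a)=F(a)/a$. Put $b=a(1-s^2)\in(0,1]$ and note that
\[
 \frac{\log b}{b-1}=-\log b+\psi(b),\qquad \psi(b):=-\frac{b\log b}{1-b}\ \ (\psi(1)=1).
\]
Since $\int_0^1\log(1-s^2)\dd s=2\log2-2$, this gives
\[
 G(a)=-\tfrac12\log a+1-\log2+\Phi(a),\qquad \Phi(a):=\tfrac12\int_0^1\psi\bigl(a(1-s^2)\bigr)\dd s .
\]
Setting $a=1$ and using $F(1)=\log 2$ from Lemma~\ref{lem:F} gives $\Phi(1)=2\log2-1$.

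With this decomposition, \eqref{eq:new-envelope} after division by $a$ is equivalent to
\[
 \Phi(a)\le\Phi(1)-\tfrac15(1-a)-\tfrac{2}{45}(1-a)^2 .
\]
Writing $\Phi(1)-\Phi(a)=\int_a^1\Phi'(u)\dd u$, it therefore suffices to prove the derivative bound
\begin{equation*}
 \Phi'(u)=\tfrac12\int_0^1(1-s^2)\,\psi'\bigl(u(1-s^2)\bigr)\dd s\ \ge\ \tfrac15+\tfrac{4}{45}(1-u)\qquad(0<u\le1).
\end{equation*}
Integrating this from $a$ to $1$ produces exactly the terms $\tfrac15(1-a)+\tfrac2{45}(1-a)^2$.

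The same bound also yields \eqref{eq:F-derivative-bound}. From $F=aG$ we get $F-aF'=-a^2G'(a)$, and $G'(a)=-\tfrac1{2a}+\Phi'(a)$. Hence
\[
 F(a)-aF'(a)=\tfrac a2-a^2\Phi'(a)\le \tfrac a2-\tfrac{a^2}5-\tfrac{4a^2}{45}(1-a)\le\tfrac a2 .
\]
The final inequality holds because $0<a\le1$.

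Everything therefore reduces to the scalar lower bound on $\Phi'$, which I expect to be the main obstacle. My first attempt would establish two facts about $\psi$ on $(0,1]$.
\begin{enumerate}
\item $\psi$ is concave on $(0,1]$, so that $\psi'$ is decreasing. This should follow from the series $\psi(b)=b\sum_{k\ge0}\frac{(1-b)^k}{k+1}$, or from a direct sign check of $\psi''$.
\item $\psi'$ satisfies an explicit lower bound of the form $\psi'(b)\ge\tfrac12+c_1(1-b)+c_2(1-b)^2$, with constants obtained from the Taylor coefficients of $\psi$ at $b=1$ (there $\psi'(1)=1/2$ and $\psi''(1)=-1/3$).
\end{enumerate}
Substituting the bound from the second fact with $b=u(1-s^2)$ and integrating the resulting polynomial in $s$ against $(1-s^2)$ gives a polynomial lower bound for $\Phi'(u)$ in $1-u$. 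The constant term requires an exact computation of $\Phi'(1)$ and a check that it exceeds $1/5$. The bound from integrating only the constant $\tfrac12$ is $\tfrac16$, which is too weak, so the $(1-b)$ terms must be kept.

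If the Taylor-type bound on $\psi'$ is not sharp enough near $b=0$, I would fall back on a different tactic. The integrand carries the factor $1-s^2$, which suppresses the region where $b$ is small, so I would split the $s$-integral at a point $s_0$. On $[0,s_0]$ I would use the polynomial bound, and on $[s_0,1]$ the crude bound $\psi'\ge\tfrac12$ coming from the concavity of $\psi$. The remaining step is then an elementary verification of two coefficient inequalities, which I expect to hold with some margin because $\tfrac15$ and $\tfrac4{45}$ appear to be rounded-down values of the true coefficients.
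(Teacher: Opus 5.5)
Your reduction is correct and is the paper's own. Since $\psi'(b)=\frac{b-1-\log b}{(b-1)^2}$ is the paper's $g$, your $\Phi'(u)=\frac12\int_0^1c\,\psi'(uc)\dd s$ with $c=1-s^2$ is exactly the quantity $(F/a)'+\frac1{2a}$ in \eqref{eq:G-integral}. Both inequalities then follow from $\Phi'(u)\ge\frac15+\frac4{45}(1-u)$ just as you say.

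The gap is in how you propose to prove that bound. From $\psi'(b)=\sum_{j\ge0}(1-b)^j/(j+2)$, your Taylor minorant is $\frac12+\frac13(1-b)+\frac14(1-b)^2$. At $u=1$ it gives only $\Phi'(1)\ge\frac16+\frac1{45}+\frac1{140}=\frac{247}{1260}<\frac15$. But the lemma forces $\Phi'(1)\ge\frac15$: let $a\to1$ in \eqref{eq:new-envelope}.

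No other choice of $c_1,c_2$ helps. Work in the variable $x=1-b$, and push the weight $\frac12(1-s^2)\dd s$ forward by $x=s^2$. The measure $\frac8{45}\delta_0+\frac7{45}\delta_{3/7}$ has the same moments of orders $0,1,2$ as this weight. Hence every quadratic minorant of $\psi'$ on $(0,1]$ yields at most $\frac4{45}+\frac7{45}\psi'(\frac47)\approx0.19987$.

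Your fallback only lowers the minorant to $\frac12$ on part of the range, so it makes matters worse. There is also no comfortable margin, since $\Phi'(1)=\frac23(1-\log2)\approx0.2046$. An exact value of $\Phi'(1)$ does not fit your scheme either. A pointwise minorant produces its own constant term. If you compute $\Phi'(1)$ separately, concavity of $\psi$ yields only $\Phi'(u)\ge\Phi'(1)$, without the $\frac4{45}(1-u)$ term.

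What is missing is a quantitative concavity estimate, which is the paper's key step: $-\psi''(b)\ge\frac13$ on $(0,1]$. This is clear from the positive series for $\psi'$, or from $g(u)=\int_0^1(1-r)[1+r(u-1)]^{-2}\dd r$. It gives $\psi'(uc)\ge\psi'(c)+\frac13(1-u)c$, and $\frac16\int_0^1c^2\dd s=\frac4{45}$ produces the slope term exactly. The constant is then $\Phi'(1)=\sum_{j\ge0}\frac1{(j+2)(2j+1)(2j+3)}$, which needs five terms to exceed $\frac15$ (partial sum $\frac{8353}{41580}$).

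Alternatively, your own substitution scheme works if you truncate the positive series for $\psi'$ at degree $4$ rather than $2$. Writing $1-uc=s^2+c(1-u)$ gives a polynomial in $1-u$ with nonnegative coefficients. Its constant term is $\frac{8353}{41580}>\frac15$, and its linear coefficient is at least $\frac13\cdot\frac12\int_0^1c^2\dd s=\frac4{45}$.
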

\begin{proof}
Put $c=1-s^2$. Differentiating \eqref{eq:F-log-representation} after division
by $a$ gives
\begin{equation}\label{eq:G-integral}
 \left(\frac{F(a)}a\right)'+\frac1{2a}
 =\frac12\int_0^1c\,g(ac)\dd s,
 \qquad g(u)=\frac{u-1-\log u}{(u-1)^2}.
\end{equation}
All singularities are removable, which justifies the differentiation. The representation
\[
 g(u)=\int_0^1\frac{1-r}{[1+r(u-1)]^2}\dd r
\]
shows that for $0<u\le1$,
\[
 -g'(u)=2\int_0^1\frac{r(1-r)}{[1+r(u-1)]^3}\dd r\ge\frac13.
\]
Consequently $g(ac)\ge g(c)+(1-a)c/3$ when $a\le1$. At $a=1$, expansion of $-\log(1-s^2)$ and Tonelli give the simple lower bound
\[
 \frac12\int_0^1c\,g(c)\dd s
 =\sum_{j=0}^\infty\frac1{(j+2)(2j+1)(2j+3)}
 \ge\sum_{j=0}^4\frac1{(j+2)(2j+1)(2j+3)}
 =\frac{8353}{41580}>\frac15.
\]
Since $\int_0^1(1-s^2)^2\dd s=8/15$, \eqref{eq:G-integral} yields
\begin{equation}\label{eq:F-ratio-derivative}
 \left(\frac{F(a)}a\right)'+\frac1{2a}
 \ge\frac15+\frac4{45}(1-a).
\end{equation}
Integrate from $a$ to $1$ and use $F(1)=\log 2$ to obtain
\eqref{eq:new-envelope}. Finally,
$F-aF'=-a^2(F/a)'$, so \eqref{eq:F-ratio-derivative} also proves
\eqref{eq:F-derivative-bound}. 
\end{proof}

\begin{lemma}\label{lem:leaf-barrier}
For every $0<a<1$,
\begin{equation*}
 \mathscr R(a):=\log 2-\frac12+\frac{a^2}{2}-a^2\log a-F(a)>0.
\end{equation*}
At $a=1$ the expression is zero.
\end{lemma}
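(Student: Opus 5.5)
We need to show $\mathscr R(a)>0$ on $(0,1)$, with $\mathscr R(1)=0$. At $a=1$, $F(1)=\log 2$ gives $\mathscr R(1)=\log2-\tfrac12+\tfrac12-0-\log2=0$. The plan is to prove $\mathscr R'(a)<0$ on $(0,1)$, so that $\mathscr R$ decreases strictly to $\mathscr R(1)=0$. The envelope \eqref{eq:new-envelope} on its own gives
\[
 \mathscr R(a)\ge \log2-\tfrac12+\tfrac{a^2}{2}-a^2\log a-a\log2+\tfrac a2\log a+\tfrac a5(1-a)+\tfrac{2a}{45}(1-a)^2,
\]
and near $a=0$ the right side is close to $\log2-\tfrac12>0$. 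Near $a=1$, however, both sides vanish to first order, so a pure value comparison will be delicate there. The derivative route is the cleaner choice.

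The derivative is
\[
 \mathscr R'(a)=a-2a\log a-a-F'(a)=-2a\log a-F'(a).
\]
Write $F'(a)=\bigl(F(a)-(F(a)-aF'(a))\bigr)/a$. The goal is $F'(a)>-2a\log a$, which requires a lower bound on $F'$. From \eqref{eq:F-derivative-bound} we get $F-aF'\le \tfrac a2-\tfrac{a^2}{5}-\tfrac{4a^2}{45}(1-a)$, so
\[
 F'(a)\ge \frac{F(a)}a-\frac12+\frac a5+\frac{4a}{45}(1-a).
\]
This in turn needs a lower bound on $F(a)/a$. Since $F$ is concave with $F(0^+)=0$ (as $\varphi(v)\le v$ gives $F(a)\to0$), $F(a)/a$ is decreasing, so $F(a)/a\ge F(1)=\log 2$. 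Therefore
\[
 F'(a)\ge \log2-\tfrac12+\tfrac a5+\tfrac{4a}{45}(1-a)\ge \log 2-\tfrac12+\tfrac a5 .
\]

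It then suffices to show $-2a\log a<\log2-\tfrac12+\tfrac a5$ on $(0,1)$. Here is where I expect trouble. The function $-2a\log a$ has maximum $2/e\approx0.736$ at $a=1/e$, while the right side is only about $0.19+0.074\approx0.27$ there. So this crude bound fails in the middle range. To repair it I would use a sharper lower bound for $F'(a)$. Concavity gives $F'(a)\ge F'(1)$, but that is a small number and does not help for $a$ small.

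A better estimate should come directly from the representation \eqref{eq:F-log-representation}. Differentiating in $a$ gives $F'(a)=\tfrac12\int_0^1 \psi(ac)\,ds$ with $c=1-s^2$, where $\psi(u)=\frac{d}{du}\bigl(u\log u/(u-1)\bigr)$. Since $F'(a)\sim -\tfrac12\log a$ as $a\to0$ while $-2a\log a\to0$, the comparison only fails in the middle range. There I would lower-bound the integral using the monotonicity of $\psi$ and compare with $-2a\log a$ on subintervals. If the analysis stays messy, a fallback is to split $(0,1)$: on $[a_0,1]$ use the derivative argument, and on $(0,a_0]$ use the value bound from \eqref{eq:new-envelope}. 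The value bound stays positive as long as $\log2-\tfrac12-a\log2+\tfrac a2\log a+\tfrac a5(1-a)>0$, which should hold up to roughly $a_0\approx0.2$.

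The main obstacle is therefore obtaining a sufficiently sharp pointwise lower bound on $F'$ over the intermediate range $a\in[0.2,1)$. Near $a=1$, the sign of $\mathscr R'(1)=-F'(1)<0$ is automatic, since $F$ is strictly increasing. I would first try to check whether the envelope value bound alone already works on the whole interval after careful numerics. If it does not, I would combine it with the concavity of $\mathscr R$ near $1$ (using $F''<0$, $\mathscr R''=-2\log a-2-F''$) to close the gap.
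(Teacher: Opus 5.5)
\textbf{There is a genuine gap: $\mathscr R$ is not monotone, so the derivative plan cannot work.} Your plan is to prove $\mathscr R'(a)=-2a\log a-F'(a)<0$ on all of $(0,1)$, and that claim is false. Differentiating \eqref{eq:F-log-representation} gives $F'(a)=\tfrac12\int_0^1 g\bigl(a(1-s^2)\bigr)\dd s$ with $g(u)=(u-1-\log u)/(u-1)^2$. Numerically $F'(\tfrac12)\approx0.57<\log2$, so $\mathscr R'(\tfrac12)>0$. Indeed $\mathscr R(1/\mathrm e)\approx0.019<\mathscr R(\tfrac12)\approx0.033<\mathscr R(0.7)\approx0.05$. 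The function falls from $\log2-\tfrac12$ to a minimum of about $0.016$ near $a\approx0.3$, rises to roughly $0.05$, and only then decreases to $\mathscr R(1)=0$.

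So the failure of $-2a\log a<\log2-\tfrac12+\tfrac a5$ in the middle range is not caused by a crude bound on $F'$. The inequality $F'(a)<-2a\log a$ actually holds there, so no sharper lower bound on $F'$ can help. Your fallback still uses the derivative argument on $[a_0,1]$, so it fails for the same reason. Separately, the truncated value bound you wrote, after dropping $\tfrac{a^2}{2}-a^2\log a$, stays positive only up to $a\approx0.12$, not $0.2$. The concavity idea has the sign backwards: $F''<0$ makes the term $-F''$ in $\mathscr R''=-2\log a-2-F''$ positive, which works against concavity. You would need a quantitative bound on $|F''|$, and the paper provides none.

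\textbf{What works is the option you mention last and set aside.} This is what the paper does: a pure value comparison using the full envelope \eqref{eq:new-envelope}, keeping the terms $\tfrac{a^2}{2}-a^2\log a$. This gives $\mathscr R\ge U$ with
\[
 U(a)=(1-a)\Bigl(\log2-\tfrac12-\tfrac{3a}{10}\Bigr)+a\Bigl(\tfrac12-a\Bigr)\log a+\tfrac{2a}{45}(1-a)^2 .
\]
The first-order vanishing at $a=1$ that you worried about is harmless. On $[\tfrac12,1)$, the facts $a(\tfrac12-a)\le0$ and $\log a\le a-1$ give $U(a)\ge(1-a)(\log2-\tfrac12-\tfrac45a+a^2)\ge(1-a)(\log2-\tfrac{13}{20})>0$. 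The genuinely tight region is near $a\approx\tfrac14$, where $U(\tfrac14)=\tfrac58\log2-\tfrac{17}{40}\approx0.008$. On $(0,\tfrac13]$ the paper uses $U'''<0$, hence $U''\ge U''(\tfrac13)>1$, together with a second-order Taylor bound at $\tfrac14$. On $[\tfrac13,\tfrac12]$ it uses concavity of $U'$ together with $U'(\tfrac13)>0$ and $U'(\tfrac12)>0$.
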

\begin{proof}
We use only
\begin{equation}\label{eq:coarse-logs}
 \frac{69}{100}<\log 2<\frac7{10},\qquad \log3>\frac{13}{12}.
\end{equation}
Indeed $\log((1+z)/(1-z))=2\sum_{j\ge0}z^{2j+1}/(2j+1)$. The first two terms at $z=1/3$ give $\log 2>56/81>69/100$, while bounding all $1/(2j+1)$ with $j\geq 1$ by $1/3$ gives $\log 2<25/36<7/10$. The first two terms at $z=1/2$ give the bound for $\log3$.

By \eqref{eq:new-envelope}, $\mathscr R(a)\ge U(a)$, where
\begin{equation}\label{eq:U-function}
 U(a)=(1-a)\left(\log 2-\frac12-\frac{3a}{10}\right)
       +a\left(\frac12-a\right)\log a
       +\frac{2a}{45}(1-a)^2.
\end{equation}
First suppose $0<a\le1/3$. Direct differentiation gives
\[
 U''(a)=\frac{4a}{15}-2\log a-\frac{116}{45}+\frac1{2a},
 \qquad
 U'''(a)=\frac4{15}-\frac2a-\frac1{2a^2}<0.
\]
Hence on this interval
\[
 U''(a)\ge U''(1/3)=-\frac{89}{90}+2\log3>1.
\]
At $a_0=1/4$, the same elementary calculation gives
\[
 U(a_0)=\frac58\log 2-\frac{17}{40}>\frac1{160},
 \qquad U'(a_0)=\frac{73}{120}-\log 2,\qquad |U'(a_0)|<\frac1{10}.
\]
Taylor's formula with the lower bound $U''\ge1$ now gives uniformly for $0<a\le1/3$,
\[
 U(a)\ge U(a_0)+U'(a_0)(a-a_0)+\frac12(a-a_0)^2
 \ge U(a_0)-\frac12U'(a_0)^2>\frac1{800}.
\]
For $1/3\le a\le1/2$, $U'''<0$ says that $U'$ is concave. Its two endpoint values are positive:
\[
 U'(1/3)=\frac{17}{30}-\log 2+\frac16\log3>\frac{17}{360},
 \qquad U'(1/2)=\frac{22}{45}-\frac{\log 2}{2}>0.
\]

For $a\in[1/3,1/2]$, concavity gives
$
 U'(a)\ge \min\bigl\{U'(1/3),U'(1/2)\bigr\}>0.
$

Finally suppose $1/2\le a<1$. Discard the last nonnegative term in \eqref{eq:U-function}. Since $a(1/2-a)\le0$ and $\log a\le a-1$,
\[
 U(a)\ge(1-a)\left(\log 2-\frac12-\frac45a+a^2\right)
 \ge(1-a)\left(\log 2-\frac{13}{20}\right)>0.
\]
The middle inequality holds because the quadratic is increasing for $a\ge1/2$; the final one follows from \eqref{eq:coarse-logs}. This covers the whole interval $(0,1)$, and $F(1)=\log 2$ gives the endpoint statement.
\end{proof}

\begin{proof}[Proof of Lemma~\ref{lem:edge-compensation}]
For $2\le k\le\delta$ put $a=k/\delta$. Monotonicity gives $\log 2-F(a)\ge0$. Discarding the last two negative terms in \eqref{eq:new-envelope},
\[
 \begin{aligned}
 k[\log 2-F(a)]-[1-a+a\log a]
 &\ge2[\log 2-F(a)]-[1-a+a\log a]\\
 &\ge(2\log 2-1)(1-a)\ge0.
 \end{aligned}
\]
Divide by $a$ to obtain $\delta[\log 2-F(k/\delta)]\ge h_\delta(k)$, strictly if $k<\delta$. The second equation in \eqref{eq:F-properties} then gives
\[
 2\delta\left[\log 2-F\left(\frac{\sqrt{kl}}{\delta}\right)\right]
 \ge \delta[\log 2-F(k/\delta)]+\delta[\log 2-F(l/\delta)]
 \ge h_\delta(k)+h_\delta(l).
\]
This proves the first assertion and its equality condition.

For the leaf case fix $b\ge3$, put $a=\sqrt b/\delta$, and differentiate in $\delta\ge b$. By \eqref{eq:F-derivative-bound},
\begin{equation}\label{eq:leaf-degree-derivative}
 \begin{aligned}
 \partial_\delta\cE_\delta(1,b)
 &=2\log 2-1-\frac1b-2[F(a)-aF'(a)]+\frac2\delta\\
 &\ge2\log 2-1-\frac1b+\frac{2-\sqrt b}{\delta}.
 \end{aligned}
\end{equation}
For $3\le b\le4$ the last line is at least $2\log 2-4/3>0$.
Now suppose $b\ge4$. Since $2-\sqrt b\le0$ and $\delta\ge b$,
$
 (2-\sqrt b)/\delta\ge (2-\sqrt b)/b
$.
Consequently, the last line of \eqref{eq:leaf-degree-derivative} is bounded
below by
\[
 2\log 2-1-\frac1{\sqrt b}+\frac1b\geq 2\log2-\frac{5}{4}>0.
\]
Here we use $-\frac1{\sqrt b}+\frac1b\geq -\frac{1}{4}$ and \eqref{eq:coarse-logs}.
\[
 -\frac1{\sqrt b}+\frac1b=-(r-r^2)\ge-\frac14,
\]
Therefore
\[
 \cE_\delta(1,b)\ge\cE_b(1,b)=2b\,\mathscr R(b^{-1/2})>0
\]
by Lemma~\ref{lem:leaf-barrier}.
\end{proof}

\bibliographystyle{amsplain}
\bibliography{references}
\clearpage
\end{document}

%% file: figures/q4HammingWeightQuotient.tex

\begin{center}
\begin{tikzpicture}[
  x=0.9cm,
  y=0.8cm,
  graph edge/.style={draw=black,line width=0.55pt},
  graph vertex/.style={circle,draw=black,fill=white,line width=0.55pt,
    minimum size=4.8pt,inner sep=0pt},
  quotient arrow/.style={->,draw=black,line width=0.55pt}
]
\begin{scope}[x=0.75cm,y=0.62cm]
  \coordinate (hc-p) at (0,0);
  \coordinate (hc-q) at (6,0);
  \foreach \i/\y in {1/1.5,2/0.5,3/-0.5,4/-1.5}{
    \coordinate (hc-a\i) at (1.35,\y);
  }
  \foreach \i/\y in {4/1.5,3/0.5,2/-0.5,1/-1.5}{
    \coordinate (hc-b\i) at (4.65,\y);
  }
  \foreach \e/\y in {12/2,13/1.2,14/0.4,23/-0.4,24/-1.2,34/-2}{
    \coordinate (hc-e\e) at (3,\y);
  }
  \foreach \i in {1,2,3,4}{
    \draw[graph edge] (hc-p)--(hc-a\i);
    \draw[graph edge] (hc-b\i)--(hc-q);
  }
  \foreach \e/\i/\j in {12/1/2,13/1/3,14/1/4,23/2/3,24/2/4,34/3/4}{
    \draw[graph edge] (hc-a\i)--(hc-e\e);
    \draw[graph edge] (hc-a\j)--(hc-e\e);
  }
  \foreach \e/\i/\j in {12/3/4,13/2/4,14/2/3,23/1/4,24/1/3,34/1/2}{
    \draw[graph edge] (hc-e\e)--(hc-b\i);
    \draw[graph edge] (hc-e\e)--(hc-b\j);
  }
  \foreach \v in {hc-p,hc-a1,hc-a2,hc-a3,hc-a4,
    hc-e12,hc-e13,hc-e14,hc-e23,hc-e24,hc-e34,
    hc-b1,hc-b2,hc-b3,hc-b4,hc-q}{
    \node[graph vertex] at (\v) {};
  }

  \node at (0,2.8) {$S_0$};
  \node at (1.35,2.8) {$S_1$};
  \node at (3,2.8) {$S_2$};
  \node at (4.65,2.8) {$S_3$};
  \node at (6,2.8) {$S_4$};
  \node at (3,-2.7) {$4$-dimensional hypercube $H_4$};
\end{scope}

\draw[quotient arrow] (5.7,0)--(7.1,0)
  node[midway,above=4pt] {$S_i\mapsto i$};

\begin{scope}[xshift=7.6cm]
  \coordinate (p-0) at (0,0);
  \coordinate (p-1) at (1.5,0);
  \coordinate (p-2) at (3,0);
  \coordinate (p-3) at (4.5,0);
  \coordinate (p-4) at (6,0);
  \draw[graph edge] (p-0)--(p-1)
    node[midway,above=3pt] {$b=4$};
  \draw[graph edge] (p-1)--(p-2)
    node[midway,above=3pt] {$b=12$};
  \draw[graph edge] (p-2)--(p-3)
    node[midway,above=3pt] {$b=12$};
  \draw[graph edge] (p-3)--(p-4)
    node[midway,above=3pt] {$b=4$};
  \foreach \v in {p-0,p-1,p-2,p-3,p-4}{
    \node[graph vertex] at (\v) {};
  }
  \node at (0,-0.55) {$m=1$};
  \node at (1.5,-0.55) {$m=4$};
  \node at (3,-0.55) {$m=6$};
  \node at (4.5,-0.55) {$m=4$};
  \node at (6,-0.55) {$m=1$};
  \node at (3,-2.1) {weighted $P_5$};
\end{scope}
\end{tikzpicture}
\end{center}